\documentclass[oneside]{amsart}

\usepackage{enumerate}
\usepackage{url}
\usepackage{relsize} 
\usepackage[bbgreekl]{mathbbol}
\DeclareSymbolFontAlphabet{\mathbb}{AMSb} 
\DeclareSymbolFontAlphabet{\mathbbl}{bbold}

\newcommand{\rhcov}{\mathrm{RH}_{\mathrm{cov}}}
\newcommand{\RHc}{\mathrm{RH}_{\mathrm{cont}}}

\newcommand{\rhom}{\mathrm{RHom}}
\newcommand{\free}{\mathrm{free}}
\newcommand{\sym}{\mathrm{Sym}}

\newcommand{\sch}{\mathrm{Sch}}
\usepackage{verbatim}
\usepackage{geometry}
\usepackage{amsfonts}
\usepackage{amssymb}
\usepackage{amsmath}

\usepackage{amsthm}
\usepackage{hyperref}

\usepackage{cleveref}
\newcommand{\schp}{\mathrm{Sch}^{\mathrm{perf}}}
\newcommand{\schpp}{\mathrm{Sch}^{\mathrm{perf}}}
\usepackage{color}
\definecolor{todo}{rgb}{1,0,0}
\definecolor{conditional}{rgb}{0,1,0}
\definecolor{e-mail}{rgb}{0,.40,.80}
\definecolor{reference}{rgb}{.20,.60,.22}
\definecolor{mrnumber}{rgb}{.80,.40,0}
\definecolor{citation}{rgb}{0,.40,.80}

\DeclareMathOperator{\spec}{Spec}

\renewcommand{\hom}{\mathrm{Hom}}

\usepackage{xy}
\input xy

\xyoption{all}

\theoremstyle{definition}
\newtheorem{definition}{Definition}[section]

\newtheorem{construction}[definition]{Construction}
\newtheorem{example}[definition]{Example}
\newtheorem{remark}[definition]{Remark}

\theoremstyle{theorem}
\newtheorem{proposition}[definition]{Proposition}
\newtheorem{lemma}[definition]{Lemma}
\newtheorem{corollary}[definition]{Corollary}

\newtheorem{theorem}[definition]{Theorem}

\renewcommand{\phi}{\varphi}

\newtheoremstyle{named}{}{}{\itshape}{}{\bfseries}{.}{.5em}{#1 \thmnote{#3}}
\theoremstyle{named}

\begin{document}

\newcommand{\shvb}{\mathrm{Shv}_{\bullet}}

\title{The mod $p$ Riemann--Hilbert correspondence and the perfect site}
\author{Akhil Mathew}
\date{\today}
\maketitle
\begin{abstract}
The mod $p$ Riemann--Hilbert correspondence (in covariant and
contravariant forms) relates $\mathbb{F}_p$-\'etale sheaves on the spectrum of an
$\mathbb{F}_p$-algebra $R$ and 
Frobenius modules over $R$.  We give an 
exposition of these correspondences using Breen's 
vanishing results on the perfect site. 
\end{abstract}

\section{Introduction}
Let $R$ be a commutative $\mathbb{F}_p$-algebra. 
A \emph{Frobenius module} over $R$ is the datum of an $R$-module $M$ equipped
with a Frobenius-semilinear map $\phi_M : M \to M$. 
Equivalently, a Frobenius module over $R$ is a left module over the twisted
polynomial ring $R[F]$, i.e., the free associative algebra over $R$ on a generator
$F$ with the relations $Fa  = a^p F$; we will typically use the latter notation. 

The mod $p$ Riemann--Hilbert correspondence relates 
$\mathbb{F}_p$-\'etale sheaves on $\spec(R)$ with various types of Frobenius
modules over $R$, and originates in the following result of Katz describing
locally constant constructible sheaves on $\spec(R)$, when $R$ is smooth over a perfect
field. 
\begin{theorem}[{\cite[Prop.~4.1.1]{Katz}}] 
\label{katzthm}
Suppose $R$ is smooth over a perfect field $k$ of characteristic $p$. 
Then there is an equivalence of categories between locally constant
constructible \'etale $\mathbb{F}_p$-sheaves on $\spec(R)$ and 
finitely generated projective $R$-modules $M$ equipped with an isomorphism
$\phi^* M \simeq M$, for $\phi: R \to R$ the Frobenius. 
\end{theorem}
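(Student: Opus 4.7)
The plan is to use the Artin--Schreier short exact sequence
\[
0 \to \underline{\mathbb{F}_p} \to \mathcal{O} \xrightarrow{F-1} \mathcal{O} \to 0
\]
on $\spec(R)_{\et}$ to set up an equivalence between $\mathbb{F}_p$-sheaves and Frobenius modules. I would define two functors in opposite directions. First, $V(\mathcal{F})$ is the Frobenius $R$-module underlying the étale sheaf $\mathcal{F} \otimes_{\underline{\mathbb{F}_p}} \mathcal{O}$; by étale descent from the trivial case $\mathcal{F} = \underline{\mathbb{F}_p}^n$, where this sheaf is simply $\mathcal{O}^n$, it descends to a finitely generated projective $R$-module with the required Frobenius structure. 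Second, given a Frobenius module $(M, \phi_M)$ with $\phi^* M \simeq M$, set
\[
\sol(M)(\spec S) := \{ m \in M \otimes_R S : \phi_M(m) = m \},
\]
an étale sheaf of $\mathbb{F}_p$-vector spaces on $\spec(R)$. The goal is then to show these are quasi-inverse.

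The central technical step is an étale-local trivialization of any $(M, \phi_M)$ satisfying the hypothesis. Working Zariski-locally on $\spec(R)$ where $M$ is free with basis $e_1, \ldots, e_n$ and $\phi_M(e_j) = \sum_i a_{ij} e_i$, the assumption $\phi^*M \simeq M$ translates into invertibility of the matrix $A = (a_{ij})$. I would introduce the $R$-algebra
\[
R' := R[x_1, \ldots, x_n]\bigl/\bigl(x_i - \textstyle\sum_j a_{ij} x_j^p : i = 1,\dots,n\bigr).
\]
Multiplying the defining relations by $A^{-1}$ rewrites them in Artin--Schreier form $x_k^p - \sum_i (A^{-1})_{ki} x_i$, exhibiting $R'$ as a free $R$-module of rank $p^n$; meanwhile, the Jacobian of the defining system with respect to the $x_i$ equals the identity in characteristic $p$, so $R'/R$ is finite étale. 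By construction the element $m := \sum_j x_j e_j \in M \otimes_R R'$ is $\phi_M$-fixed, and $\sol(M)|_{\spec R'}$ is constant of rank $n$ with the fixed elements providing an $R'$-basis of $M \otimes_R R'$.

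Granted this trivialization, the equivalence follows formally. The composition $\sol \circ V(\mathcal{F}) = (\mathcal{F} \otimes_{\underline{\mathbb{F}_p}} \mathcal{O})^{F=1}$ recovers $\mathcal{F}$ by tensoring $\mathcal{F}$ with the Artin--Schreier sequence, while $V \circ \sol(M) \simeq M$ by étale descent along $R'/R$, since both sides pull back to the trivial Frobenius module $(R')^n$. The finitely-generated-projective and locally-constant-constructible conditions match because the single cover $R'/R$ simultaneously trivializes both sides.

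I expect the main obstacle to be verifying the étale-local trivialization, and specifically the observation that the invertibility of $A$ is precisely what promotes the finite flat Artin--Schreier-type cover $R'/R$ to a finite \'etale one. The smoothness of $R$ over $k$ enters only to guarantee constructibility of the resulting étale $\mathbb{F}_p$-sheaves (so that LCC corresponds to finite étale covers with $\mathbb{F}_p$-action); the heart of the equivalence requires only that $R$ be an $\mathbb{F}_p$-algebra so that Artin--Schreier applies.
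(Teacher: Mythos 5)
The paper does not prove this statement—it cites it as Katz's Prop.~4.1.1—so there is no internal proof to compare against; I evaluate your argument on its own terms. The overall strategy (Artin--Schreier sequence, a solutions functor, an explicit \'etale cover, the Jacobian criterion) is the right one, but the trivialization step has a genuine gap.

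The algebra $R' = R[x_1,\dots,x_n]/(x_i - \sum_j a_{ij}x_j^p)$ you construct is the \emph{scheme of all solutions} of $\phi_M(m)=m$, and the origin $(0,\dots,0)$ always satisfies these relations, so $\spec R' \to \spec R$ has a section along which the tautological fixed element $m = \sum_j x_j e_j$ vanishes. Consequently it is false that $\sol(M)|_{\spec R'}$ is constant or that the fixed elements of $M\otimes_R R'$ give an $R'$-basis. A concrete counterexample: take $R = \mathbb{F}_p[t,t^{-1}]$, $M=R$ with $\phi_M(1)=t$ (so $A=(t)$ is invertible). Then $x - tx^p = -tx(x^{p-1}-t^{-1})$, and since $x$ and $x^{p-1}-t^{-1}$ generate the unit ideal, $R' \cong R \times R[x]/(x^{p-1}-t^{-1})$. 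Over the first factor $m=x\cdot 1 = 0$, which is certainly not a basis. Even discarding bad components would not help for $n>1$: a single solution $m$ cannot trivialize a rank-$n$ module; you need $n$ $\mathbb{F}_p$-linearly independent solutions, i.e.\ a cover over which the entire solution \emph{sheaf}, not one section of it, becomes constant.

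Two standard ways to repair this. (i) Replace $R'$ by the \emph{frame bundle}: $R'' = R[x_{ij}][\det(X)^{-1}]$ modulo $X = AX^{[p]}$ where $X^{[p]}=(x_{ij}^p)$. The same Jacobian computation shows $R''/R$ is \'etale, and surjectivity of $\spec R''\to\spec R$ is checked on geometric points by the Lang/Dieudonn\'e argument (over an algebraically closed field, $n$ $\mathbb{F}_p$-independent solutions of $\phi_M(m)=m$ in a rank-$n$ module are automatically linearly independent over the field). Over $R''$ the columns of $X$ give a basis of $\phi$-fixed elements, so $M\otimes_R R'' \cong (R'')^n$ with the standard Frobenius, and descent now works. (ii) Alternatively, avoid an explicit trivializing cover: observe that $\sol(M)$ is already lcc because it is \emph{represented} by the finite \'etale scheme $\spec R'$, and check that the natural map $\sol(M)\otimes_{\mathbb{F}_p}\mathcal{O} \to \widetilde{M}$ is an isomorphism stalk-by-stalk at geometric points; there it reduces to the statement that the stalk is an $\mathbb{F}_p$-vector space of dimension $n$ (counting points of the fiber of $\spec R'$) whose image spans after reduction to the algebraically closed residue field, and then Nakayama plus equality of ranks finishes the argument.

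The remaining steps are fine: the Artin--Schreier direction $\sol\circ V \simeq \mathrm{id}$ is correct, the Jacobian criterion computation establishing \'etaleness is correct (and is what uses $A$ invertible), the observation that the rewritten relations exhibit $R'$ as free of rank $p^n$ is correct, and the closing remark that the equivalence itself does not need smoothness of $R$ is also correct.
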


The question of generalizing 
\Cref{katzthm} to more general constructible sheaves, and to more general
$\mathbb{F}_p$-schemes,  has been studied by a number of authors, including
 \cite{EmertonKisin} (who construct a contravariant Riemann--Hilbert
 correspondence) and \cite{ BocklePink, BLRH} (who construct a covariant
 Riemann--Hilbert functor). 
 The purpose of this note is to revisit these results via the vanishing
 results of Breen, \cite{Breen}. 

Let us first describe the contravariant Riemann--Hilbert correspondence
of Emerton--Kisin, \cite{EmertonKisin},  cf.~also \cite{Ohkawa, Schedlmeier} for extensions to
singular schemes. 
We consider the additive group $\mathbb{G}_a$
as a sheaf on $\spec(R)_{\mathrm{et}}$, with a natural left $R[F]$-module
structure. 

\begin{theorem}[{\cite{EmertonKisin}}] 
\label{EKthm}
Suppose $R$ 
is smooth over a perfect field $k$ of characteristic $p$. 
Then the functor\footnote{In \cite{EmertonKisin}, the Riemann--Hilbert functor
is normalized with an additional cohomological shift; this does not affect the
conclusions of this statement.} 
\[  \RHc \stackrel{\mathrm{def}}{=}\mathrm{RHom}_{\mathcal{D}( \spec(R)_{\mathrm{et}}, \mathbb{F}_p)}(-, \mathbb{G}_a): \mathcal{D}^b_{\mathrm{cons}}( \spec(R)_{\mathrm{et}}, \mathbb{F}_p)^{op}  \to
\mathcal{D}( R[F])  \]
is fully faithful, with essential image consisting of those objects 
$X \in \mathcal{D}(R[F])$ which are $t$-bounded and such that, for each
$i \in \mathbb{Z}$: 
\begin{enumerate}
\item $H^i(X)$ is a unit $R[F]$-module: that is, $F$ induces an isomorphism
of $R$-modules
$\phi^* H^i(X) \xrightarrow{\sim} H^i(X)$. 
\item
$H^i(X)$ is finitely generated as an $R[F]$-module. 
\end{enumerate}
\end{theorem}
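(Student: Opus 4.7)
The plan is to construct a candidate quasi-inverse
\[ \sol(M) := \rhom_{R[F]}(M, \mathbb{G}_a) : \mathcal{D}(R[F])^{op} \longrightarrow \mathcal{D}(\spec(R)_\et, \FF_p), \]
and then to show that the unit $\sF \to \sol(\RHc(\sF))$ and, on the image subcategory, the counit $\RHc(\sol(X)) \to X$ are equivalences. The two essential inputs are the Artin--Schreier triangle $\FF_p \to \mathbb{G}_a \xrightarrow{F-1} \mathbb{G}_a$ on $\spec(R)_\et$ and Breen's vanishing of $\mathrm{Ext}^{>0}(\mathbb{G}_a, \mathbb{G}_a)$ on the perfect site. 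Since $\FF_p$-étale cohomology is unchanged under perfection of $R$, one may legitimately transport the computations to the perfect site, which is where Breen's vanishing becomes available.

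I would begin by verifying the adjunction on the constant sheaf: $\RHc(\FF_p) = R$ with its natural Frobenius action; using the two-term resolution $0 \to R[F] \xrightarrow{\cdot (F-1)} R[F] \to R \to 0$, one computes $\sol(R)$ as the fiber of $F-1$ acting on $\mathbb{G}_a$, which is $\FF_p$ by Artin--Schreier. By dévissage along the étale stratification of a constructible sheaf --- equivalently, by reducing to sheaves of the form $j_! \FF_p$ for $j:U\to\spec R$ étale --- this extends to all of $\mathcal{D}^b_{\mathrm{cons}}$; the role of Breen's vanishing is precisely to ensure that higher $\rhom$-terms between successive pieces of the stratification collapse as expected. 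That $\RHc$ lands in the subcategory of bounded complexes with finitely generated unit cohomology follows from the same local computations together with the smoothness of $R$ over $k$, and ultimately reduces to Katz's \Cref{katzthm} on the locally constant strata.

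For the essential image, given $X \in \mathcal{D}(R[F])$ satisfying the two stated hypotheses, I would set $\sF := \sol(X)$ and verify $\RHc(\sF) \we X$ by using the unit condition to reduce, cohomological degree by cohomological degree via a truncation argument, to the case $X = R$ handled above. The remaining --- and most delicate --- point is showing that $\sF$ is actually constructible: one must combine finite generation of each $H^i(X)$, the unit $R[F]$-structure, and smoothness to produce an étale stratification of $\spec R$ on which $\sF$ becomes locally constant, then invoke Katz's \Cref{katzthm} on each stratum. This constructibility step, handled by Noetherian induction on $\spec R$, is the principal obstacle, as the abstract formalism together with Breen's vanishing only guarantees that $\sF$ is a well-behaved (pro-)étale $\FF_p$-sheaf with the expected formal properties; genuine input from the smoothness hypothesis is required to promote it to an object of $\mathcal{D}^b_{\mathrm{cons}}$.
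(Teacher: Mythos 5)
Your proposal correctly identifies the two analytic inputs --- the Artin--Schreier sequence and Breen's vanishing --- but it misplaces where Breen's theorem lives, and this creates a gap at the center of the argument. You write that ``since $\mathbb{F}_p$-\'etale cohomology is unchanged under perfection of $R$, one may legitimately transport the computations to the perfect site,'' but replacing $R$ by $R_{\mathrm{perf}}$ (which indeed leaves \'etale cohomology unchanged) is a different operation from passing from the small \'etale site of $\spec R$ to the big perfect site $\schpp_R$ (all perfect schemes over $R$). Breen's theorem computes $\rhom(\mathbb{G}_a,\mathbb{G}_a)$ on the latter; it says nothing about $\rhom$ in $\mathcal{D}(\spec(R)_{\mathrm{et}},\mathbb{F}_p)$, nor in $\mathcal{D}(\spec(R_{\mathrm{perf}})_{\mathrm{et}},\mathbb{F}_p)$. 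As you have set things up, with $\sol$ landing in the small site and no comparison with a big site, the role you assign to Breen's vanishing cannot actually be cashed out. The paper's resolution is \Cref{fullfaithfulnessofSol}: the solutions functor defined on the \emph{big} \'etale site $\sch_R$ is fully faithful on $\mathcal{D}^{\mathrm{fproj}}_{\mathrm{unit}}(R[F])$, proved by a Breen--Deligne resolution, a flat base-change to $R_{\mathrm{perf}}$, and Breen on the perfect site; the small-site sheaves are then recognized among big-site ones by the rigidity criterion from \Cref{bigetale} (cf.\ \Cref{imageofpi}). Your proposal is missing this intermediate theorem, which is the load-bearing step.

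Two further deviations in your d\'evissage would need repair. To identify $\lambda^*\mathcal{F}$ with a solutions sheaf, the paper reduces via \Cref{thicksubcatgen} to pushforwards $f_*\mathbb{F}_p$ along \emph{finite} maps, not to $j_!\mathbb{F}_p$ for \'etale $j$; the former interacts transparently with the Artin--Schreier sequence (becoming the fiber of $F-1$ on $B\otimes^{\mathbb{L}}_R(-)$ and hence $\mathrm{Sol}$ of an explicit unitalization), whereas $j_!\mathbb{F}_p$ does not, and the thick generation statement you'd need for $j_!\mathbb{F}_p$ is not what the paper proves. Finally, invoking Katz's \Cref{katzthm} to handle the locally constant strata is circular-adjacent --- that theorem is exactly the locally constant case of \Cref{EKthm} --- and in the paper it is replaced by the rigidity computation \Cref{rigidity2} plus a pseudo-coherence argument for constructibility, which has the additional virtue of working for all regular noetherian $R$ rather than only those smooth over a perfect field.
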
 

The covariant mod $p$ Riemann--Hilbert correspondence
is 
developed in the following form by Bhatt--Lurie
\cite{BLRH}
\footnote{In \cite{BLRH}, the result is stated with
additional boundedness assumptions.},  
and earlier  (in a slightly different form) when $R$ is noetherian by  
B\"ockle--Pink \cite{BocklePink}. 

\begin{theorem}[{\cite[Th.~12.1.5]{BLRH}}, {\cite{BocklePink}}] 
\label{BLthm}
Let $R$ be any $\mathbb{F}_p$-algebra. 
There is a fully faithful, cocontinuous, $t$-exact embedding 
$$\rhcov: \mathcal{D}(\spec(R)_{\mathrm{et}}, \mathbb{F}_p) \to
\mathcal{D}(R[F])$$
of the derived $\infty$-category of \'etale sheaves of $\mathbb{F}_p$-modules on $\spec R$ into
the derived $\infty$-category of $R[F]$-modules. The essential image consists of 
those objects of $X \in \mathcal{D}(R[F])$ such that 
for each $i \in \mathbb{Z}$: 
\begin{enumerate}
\item  $F$ acts isomorphically on $H^i(X)$ (i.e., induces an isomorphism of
$R$-modules $H^i(X) \xrightarrow{\sim} \phi_* H^i(X)$). 
\item Each $x \in H^i(X)$ satisfies an equation of the form
$F^n (x) + a_1F^{n-1}(x) + \dots + a_n x = 0$ for some $a_1, \dots,
a_n \in R$. 
\end{enumerate}

\end{theorem}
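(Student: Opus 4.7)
The plan is to construct $\rhcov$ as the derived global sections of the étale sheaf $\mathcal{F} \otimes^L_{\mathbb{F}_p} \mathbb{G}_a$, equipped with the Frobenius action and $R$-module structure inherited from $\mathbb{G}_a$; concretely,
\[
\rhcov(\mathcal{F}) := R\Gamma(\spec(R)_{\et}, \, \mathcal{F} \otimes^L_{\mathbb{F}_p} \mathbb{G}_a).
\]
Cocontinuity is immediate, since both $(-)\otimes^L \mathbb{G}_a$ and $R\Gamma$ on the étale site of an affine scheme preserve small colimits. The real work lies in $t$-exactness, full faithfulness, and the essential image characterization.

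For $t$-exactness, I would pass to the perfect site $\spec(R)_{\mathrm{perf}}$. The key input is Breen's vanishing theorem, asserting that $\mathbb{G}_a$ has no higher cohomology on the perfect site of any $\mathbb{F}_p$-algebra. Since étale $\mathbb{F}_p$-cohomology is unchanged under pullback to the perfect site (the constant sheaf $\mathbb{F}_p$ being insensitive to perfection), one may compute $\rhcov(\mathcal{F})$ on the perfect site, where Breen's vanishing, applied to a \v{C}ech resolution by affine perfect étale covers, shows that $\rhcov$ sends discrete étale sheaves to discrete $R[F]$-modules.

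For full faithfulness, the Artin--Schreier sequence $0 \to \mathbb{F}_p \to \mathbb{G}_a \xrightarrow{F-1} \mathbb{G}_a \to 0$ on $\spec(R)_{\et}$ recovers $\mathcal{F}$ from $\mathcal{F} \otimes \mathbb{G}_a$ as the fiber of $F-1$. Applied to an internal $R\mathcal{H}om$ and then globalized, this exhibits $\rhom_{\et}(\mathcal{F}, \mathcal{G})$ as the fiber of $F-1$ acting on $\rhom_{R[F]}(\rhcov(\mathcal{F}), \rhcov(\mathcal{G}))$, yielding the fully faithful property. Conditions $(1)$ and $(2)$ for $\rhcov(\mathcal{F})$ itself are checked by reducing via cocontinuity and $t$-exactness to the case $\mathcal{F} = j_! \underline{\mathbb{F}_p}$ for $j:\spec(S) \to \spec(R)$ finite étale, where $\rhcov(\mathcal{F})$ is essentially $S$ with its natural Frobenius, whose elements visibly satisfy monic additive polynomials in $F$.

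The main obstacle is the converse direction of the essential image statement: given $X \in \mathcal{D}(R[F])$ satisfying $(1)$ and $(2)$, one must produce an étale sheaf $\mathcal{F}$ with $\rhcov(\mathcal{F}) \simeq X$. By $t$-exactness one may treat $X$ one cohomological degree at a time, so assume $X = M$ is a discrete unit $R[F]$-module satisfying $(2)$. I would define $\mathcal{F}$ as the étale sheafification of the functor sending an étale $R$-algebra $S$ to $(M \otimes_R S)^{F=1}$. The algebraicity condition $(2)$ is precisely what guarantees this is an étale sheaf rather than merely a pro-étale one: each $x \in M$ is killed by a monic additive polynomial in $F$, so its $F$-invariants split after a \emph{finite} étale cover corresponding to the Artin--Schreier-like extension defined by that polynomial. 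One then verifies $\rhcov(\mathcal{F}) \simeq M$ via Artin--Schreier combined with Breen vanishing.
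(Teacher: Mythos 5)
The proposal takes a genuinely different route than the paper, but it has several concrete gaps, starting with the definition of $\rhcov$ itself.

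\textbf{The proposed formula is wrong.} You define $\rhcov(\mathcal{F}) = R\Gamma(\spec(R)_{\mathrm{et}},\mathcal{F}\otimes^{\mathbb{L}}_{\mathbb{F}_p}\mathbb{G}_a)$. Take $\mathcal{F} = g_*\mathbb{F}_p$ for a finite morphism $g:\spec(B)\to\spec(R)$. By the projection formula, $g_*\mathbb{F}_p\otimes\mathbb{G}_a = g_*\mathbb{G}_{a,B}$, so your $\rhcov(g_*\mathbb{F}_p) = R\Gamma(\spec(B)_{\mathrm{et}},\mathbb{G}_{a,B}) = B$. But $F$ does not act invertibly on $B$ when $B$ is not perfect, contradicting condition~(1) that your own theorem asserts for the essential image. (Already $\mathcal{F}=\mathbb{F}_p$ gives $R$, not $R_{\mathrm{perf}}$.) The correct formula must pass to the perfection of $\mathbb{G}_a$, i.e.~use $\varprojlim_\phi\mathbb{G}_a$ or, equivalently, work on the perfect site from the start; the paper's formula is $\mathrm{RHom}_{\mathcal{D}(\schp_R,\mathbb{F}_p)}(\mathbb{G}_a,\pi^*\mathcal{F})[1]$, where $\mathbb{G}_a$ on $\schp_R$ has $F$ invertible.

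\textbf{Breen's theorem is misstated, and the wrong tool is used for $t$-exactness.} You write that Breen's theorem asserts ``$\mathbb{G}_a$ has no higher cohomology on the perfect site of any $\mathbb{F}_p$-algebra.'' That vanishing (on the \emph{small} site of an affine) is elementary quasi-coherent descent, not Breen's theorem. Breen's actual theorem computes $\mathrm{Ext}^*_{\mathcal{D}(\schp_{\mathbb{F}_p},\mathbb{F}_p)}(\mathbb{G}_a,\mathbb{G}_a) = \mathbb{F}_p[F^{\pm 1}]$ on the \emph{big} perfect site. These are entirely different statements; only the latter is deep, and it is the engine behind full faithfulness, not $t$-exactness. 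Your $t$-exactness argument therefore proves nothing.

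\textbf{Full faithfulness cannot be deduced from Artin--Schreier alone.} You claim Artin--Schreier ``exhibits $\rhom_{\mathrm{et}}(\mathcal{F},\mathcal{G})$ as the fiber of $F-1$ acting on $\rhom_{R[F]}(\rhcov\mathcal{F},\rhcov\mathcal{G})$.'' This is not coherent: $F-1$ does not act on $\rhom_{R[F]}$ (maps of $R[F]$-modules commute with $F$). Artin--Schreier only rewrites $\rhom_{\mathrm{et}}(\mathcal{F},\mathcal{G})$ as $\mathrm{fib}(F-1)$ on $\rhom_{\mathrm{et}}(\mathcal{F},\mathcal{G}\otimes\mathbb{G}_a)$, which is still a sheaf-theoretic object; identifying it with something built purely from the modules $\rhcov\mathcal{F}$, $\rhcov\mathcal{G}$ is exactly the content of Breen's theorem (as a Morita-theoretic fullness statement for $W\otimes^{\mathbb{L}}_{\wfr}(-):\mathcal{D}(\wfr)_{p\mathrm{-tors}}\to\mathcal{D}(\schp_R)$). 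The paper proves full faithfulness by exhibiting both sides as embedded in $\mathcal{D}(\schp_R)$ via two fully faithful functors ($\pi^*$ and $W\otimes^{\mathbb{L}}_{\wfr}(-)$) whose images it compares; you have no substitute for this step.

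\textbf{Essential image arguments are incomplete.} Checking conditions (1), (2) on $j_!\mathbb{F}_p$ for $j$ finite \'etale is insufficient: these do not generate $\mathcal{D}(\spec(R)_{\mathrm{et}},\mathbb{F}_p)$. One must take $j$ affine \'etale, factor by Zariski's main theorem through an open immersion followed by a finite morphism, and then handle pushforwards along finite (not finite \'etale) maps. For the converse direction, your observation that algebraicity produces finite Artin--Schreier covers is the right heuristic, but the actual argument requires showing the presheaf $R'\mapsto\mathrm{fib}(F-1:R'\otimes_R M\to R'\otimes_R M)$ is invariant under henselian pairs of perfect $R$-algebras and commutes with filtered colimits (the paper's Proposition on henselian rigidity), not merely that it sheafifies. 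Finally, the reduction ``one degree at a time'' for unbounded $X$ needs a Postnikov-completeness argument that you omit.
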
 

Unlike \Cref{EKthm}, \Cref{BLthm} applies to all 
of
$\mathcal{D}(\spec(R)_{\mathrm{et}}, \mathbb{F}_p)$ (not only to the
constructible objects), works for arbitrary $\mathbb{F}_p$-algebras, and is $t$-exact (hence induces an equivalence on
abelian categories). 
However, 
 the functor $\rhcov$ of \Cref{BLthm} is constructed indirectly,  
 as the left adjoint to an
 explicit solutions functor from $\mathcal{D}(R[F])$ to
 $\mathcal{D}(\spec(R)_{\mathrm{et}}, \mathbb{F}_p)$. 

In this note, we  
give an alternative exposition of \Cref{BLthm} (and its analog for $p$-power
torsion sheaves, \Cref{mainthm} below), based on the vanishing results of 
\cite{Breen}. 
 The strategy is as follows: any sheaf on the \'etale site of
$\spec(R)$ can be pulled back to the  site of \emph{perfect} schemes
over $\spec(R)$, denoted
$\schp_R$, equipped with the \'etale topology, and the pullback functor
$\pi^*$ is fully faithful on derived $\infty$-categories. 
The
results of \cite{Breen} give a fully faithful embedding  of
those objects of $\mathcal{D}(R[F])$ satisfying condition (1) into
$\mathcal{D}(\schp_R, \mathbb{F}_p)$, cf.~\Cref{Breenthm}. 
Via a henselian rigidity argument, one then identifies directly the image of $\pi^*$ in 
$\mathcal{D}(\schp_R, \mathbb{F}_p)$
with the image of 
those objects of $\mathcal{D}(R[F])$ satisfying (1) and (2), from which the
result follows. 

As a consequence, we  obtain  (\Cref{consRH}) the following description of the
covariant Riemann--Hilbert functor: 

\begin{corollary} 
The
covariant Riemann--Hilbert functor $\rhcov$ is given by 
\begin{equation} \label{formulaRH} \rhcov(\mathcal{F)} = 
\mathrm{RHom}_{\mathcal{D}( \schp_R, \mathbb{F}_p)}(\mathbb{G}_a,
\pi^* \mathcal{F})[1], \quad 
\mathcal{F} \in \mathcal{D}(
\spec(R)_{\mathrm{et}}, \mathbb{F}_p) 
,\end{equation}  for $\mathbb{G}_a$ the additive group. 
\end{corollary}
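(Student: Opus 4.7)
The plan is to verify that formula \eqref{formulaRH} is precisely the embedding produced by the strategy sketched in the introduction. That strategy constructs $\rhcov$ in two steps: pull back via $\pi^*$ into $\mathcal{D}(\schp_R, \mathbb{F}_p)$, then invert Breen's fully faithful embedding (\Cref{Breenthm}) on its essential image. The inverse of Breen's embedding on its essential image is computed by $\mathrm{RHom}_{\mathcal{D}(\schp_R, \mathbb{F}_p)}(\mathbb{G}_a, -)[1]$; the shift $[1]$ reflects Breen's vanishing result, which concentrates $\mathrm{RHom}(\mathbb{G}_a, \mathbb{G}_a)$ on the perfect site in cohomological degree $1$, so that normalizing Breen's inverse to send $\mathbb{G}_a$ back to $R[F]$ (the image of $R[F]$ under Breen's embedding) introduces a shift. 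Composed with the henselian rigidity identification of the image of $\pi^*$ with objects additionally satisfying (2), this exhibits the right-hand side of \eqref{formulaRH} as a fully faithful, $t$-exact, cocontinuous embedding whose essential image exactly matches that of $\rhcov$ described in \Cref{BLthm}.

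To match this candidate functor $\Phi$ with the particular Bhatt--Lurie $\rhcov$, and not merely with some other embedding having the same essential image, I would use that $\Phi$ and $\rhcov$ are both $t$-exact, cocontinuous functors between presentable stable $\infty$-categories carrying reasonable $t$-structures. As such, each is determined by its restriction to the heart, so it suffices to produce a natural isomorphism of the underlying abelian-level functors $\Phi^\heartsuit \simeq \rhcov^\heartsuit$ on $\mathrm{Shv}_{\et}(\spec R, \mathbb{F}_p)^\heartsuit$. Concretely, for an étale sheaf $\mathcal{F}$, one has $\Phi^\heartsuit(\mathcal{F}) = \mathrm{Ext}^1_{\mathcal{D}(\schp_R, \mathbb{F}_p)}(\mathbb{G}_a, \pi^*\mathcal{F})$ equipped with its natural $R[F]$-action, and one wishes to identify this with the classical covariant Riemann--Hilbert $R[F]$-module.

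The main obstacle is this abelian-level identification. I would first check it on representable sheaves $j_! \mathbb{F}_p$ for $j: \spec(S) \to \spec(R)$ étale, using that $\pi^*$ commutes with $j_!$ and applying the Artin--Schreier sequence $0 \to \mathbb{F}_p \to \mathbb{G}_a \xrightarrow{F-1} \mathbb{G}_a \to 0$ on the perfect site together with Breen's vanishing to obtain $\mathrm{Ext}^1_{\mathcal{D}(\schp_R, \mathbb{F}_p)}(\mathbb{G}_a, \pi^* j_! \mathbb{F}_p) \simeq S$ as an $R[F]$-module, which matches the classical description of $\rhcov(j_! \mathbb{F}_p)$ from B\"ockle--Pink. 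Since the $j_! \mathbb{F}_p$ generate $\mathrm{Shv}_{\et}(\spec R, \mathbb{F}_p)^\heartsuit$ under colimits and both functors are exact and cocontinuous on the heart, agreement on representables extends to all étale sheaves, and then $t$-exactness promotes this to a natural equivalence $\Phi \simeq \rhcov$ on the full derived $\infty$-categories.
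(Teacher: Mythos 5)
Your overall plan — pull back via $\pi^*$ and then apply the inverse of Breen's embedding — is exactly the paper's strategy, and the formula \eqref{formulaRH} does drop out of \Cref{RHcons} in precisely this way. However, your explanation of the shift $[1]$ is wrong and would mislead a reader: \Cref{Breenthmappendix} shows that $\mathrm{RHom}_{\mathcal{D}(\schp_R, \mathbb{F}_p)}(\mathbb{G}_a, \mathbb{G}_a) \simeq R[F^{\pm 1}]$ is concentrated in cohomological degree $0$, not degree $1$, so Breen's embedding and its right adjoint/inverse involve no cohomological shift at all. The shift $[1]$ in \eqref{formulaRH} instead comes from the Artin--Schreier normalization made in \Cref{FhasimageinRH}: the constant sheaf $\mathbb{F}_p$ on the perfect site is the \emph{fiber} of $F - 1$ on $\mathbb{G}_a$, whereas $W \otimes^{\mathbb{L}}_{\wfr}(-)$ produces \emph{cofibers} of $F-1$ (cf.\ \eqref{cofibexpr}), so $\pi^*\mathcal{F} \simeq W \otimes^{\mathbb{L}}_{\wfr}\rhcov(\mathcal{F})[-1]$ with an explicit $[-1]$ shift introduced by the fiber/cofiber discrepancy. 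This is also noted in the footnote to \Cref{Breenthm}.

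On the identification of the candidate functor with the Bhatt--Lurie $\rhcov$, your proposed route (restrict to the heart, check on $j_! \mathbb{F}_p$, extend by cocontinuity) is genuinely different from the paper's. The paper observes that the functor constructed here is, by a diagram of adjoints, adjoint to the explicit solutions functor $M \mapsto \pi_*(W \otimes^{\mathbb{L}}_{\wfr} M)[-1]$, which is the same solutions functor that Bhatt--Lurie adjoin against in \cite[Th.~6.1.1]{BLRH}; uniqueness of adjoints then supplies the natural identification immediately. Your heart-level strategy could in principle work (using that $\mathcal{D}(\spec(R)_{\mathrm{et}})_{\ptors}$ is the derived $\infty$-category of its heart, \Cref{smalletalesite}, and the universal property of derived categories), but it requires constructing an actual natural transformation on the heart first and then checking it is an isomorphism on a generating family; checking pointwise agreement on the $j_!\mathbb{F}_p$ alone is not sufficient. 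The paper's adjoint-uniqueness argument sidesteps this entirely and is the cleaner route.
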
 
This formula \eqref{formulaRH} for $\rhcov$ is thus analogous to the formula
in \Cref{EKthm}, except that we additionally pull back to the perfect site. 

We also give an exposition of \Cref{EKthm} using similar methods. 
In particular, we show that a variant of the solutions functor from
\cite{EmertonKisin} gives a fully faithful embedding from 
the subcategory $\mathcal{D}_{\mathrm{unit}}(R[F])$ of $\mathcal{D}(R[F])$ consisting of $t$-bounded unit objects
(i.e., satisfying condition (1) of \Cref{EKthm}, but without any finite
generation assumptions) into the \emph{big} \'etale site
(\Cref{fullfaithfulnessofSol}, which is stated for more general $R$); this fact is closely related to the extension
by Kato
\cite[Prop.~2.1]{Kato1} of the results of \cite{Breen}. Once
again, we can identify those objects in $\mathcal{D}_{\mathrm{unit}}(R[F])$ additionally satisfying (2) 
of \Cref{EKthm} with those sheaves on the big \'etale site satisfying the necessary rigidity
properties to be pulled back from constructible objects on the small \'etale site. 
Along the way, we also show that $\mathcal{D}_{\mathrm{unit}}(R[F])$ is
identified with the derived $\infty$-category of its heart, which is Lyubeznik's
category of $F$-modules or unit $R[F]$-modules \cite{Lyubeznik}; this sharpens results of Ma
\cite{Ma14}. 

It will be convenient to work with unbounded derived
$\infty$-categories of various sorts.  These unbounded
derived $\infty$-categories (for example, of $p$-power torsion \'etale
sheaves on a qcqs $\mathbb{F}_p$-scheme) behave quite well 
because of the presence of bounds on cohomological dimension; we have included
some general results in this direction, using that the relevant abelian
categories are locally regular coherent (\Cref{locregcoh}). 

The arguments of Breen \cite{Breen} rely on some facts about the Steenrod
algebra; purely algebraic proofs via functor homology have also appeared,
cf.~\cite[Lem.~0.3]{FranjouLannesSchwartz} and \cite[Cor.~1.2]{KuhnIII}.
In
the appendix, we explain another argument, suggested by Scholze, which proves the
result using instead the $v$-descent results of \cite{BSWitt}. 

\subsection*{Notation}
We will freely use the theory of Grothendieck prestable $\infty$-categories of
\cite[App.~C]{SAG}. 

We will frequently use the Breen--Deligne resolution in the form of
\cite[Lec.~4]{condensed}. 
Given a site $\mathcal{T}$ with associated topos of sheaves
$\mathrm{Sh}(\mathcal{T})$, and an abelian group object $A \in
\mathrm{Sh}(\mathcal{T})$,
there is a \emph{functorial} resolution $X_\bullet(A)$ of $A$ where each term
$X_i(A)$ is a finite direct sum of terms of the form $\mathbb{Z}[A^j], j \geq
0$. 

For an $\mathbb{F}_p$-algebra $R$, we let $R_{\mathrm{perf}} = \varinjlim_{\phi}
R$ to be the direct limit perfection of $R$. 

\subsection*{Acknowledgments} I am very grateful to Johannes Ansch\"utz, Bhargav Bhatt,
Arthur-C\'esar Le Bras,  
Jacob Lurie, and 
Lucas Mann
 for stimulating discussions and correspondence. 
 The idea of using the results of \cite{Breen} in this
(and other more elaborate) settings was also independently observed by them.   
I also thank Matthew Emerton for a helpful conversation about
\cite{EmertonKisin}, and Linquan Ma for exchanges about $F$-finiteness. 
 Special thanks to Peter Scholze
 for suggesting the argument in the appendix. 
 I heartily thank the referee for many helpful comments and corrections. 
 This
work was done while the author was a Clay Research Fellow and while the author
was supported by the National Science Foundation (\#2152235). 

\section{Preliminaries on the derived categories}

\newcommand{\ptors}{p-\mathrm{tors}}
Let $\mathcal{T}$ be a site (we use the conventions of \cite[Tag
00VG]{stacks-project}); 
moreover, suppose $\mathcal{T}$ is coherent, i.e., fiber products exist in $\mathcal{T}$
and any cover has a finite refinement. 
This implies that abelian sheaf cohomology on $\mathcal{T}$ commutes with
filtered colimits \cite[Exp.~VI, Cor.~5.2]{SGA4}. 

We consider the unbounded derived $\infty$-category $\mathcal{D}(\mathcal{T})$ of the Grothendieck abelian category of sheaves of abelian groups on $\mathcal{T}$, or
equivalently 
\cite[Cor.~2.1.2.3]{SAG}
the $\infty$-category of hypercomplete sheaves on $\mathcal{T}$ with values in
$\mathcal{D}(\mathbb{Z})$. 
The presentable, stable $\infty$-category $\mathcal{D}(\mathcal{T})$ is equipped
with a natural $t$-structure, whose connective objects
$\mathcal{D}(\mathcal{T})^{\leq 0}$ form a Grothendieck prestable
$\infty$-category in the sense of \cite[App.~C]{SAG}. 
We let $\mathcal{D}(\mathcal{T})_{\ptors} \subset
\mathcal{D}(\mathcal{T})$ denote the subcategory of $p$-power torsion
objects: i.e., those $X$ with $X[1/p] =0 $. Then $\mathcal{D}(\mathcal{T})_{\ptors}$
inherits a natural $t$-structure from 
$\mathcal{D}(\mathcal{T})$, and $\mathcal{D}(\mathcal{T})_{\ptors}^{\leq 0}$ is
Grothendieck prestable by \cite[Prop.~C.5.2.1]{SAG}. 
We say that an object $t \in \mathcal{T}$ has {mod $p$ cohomological
dimension $\leq d$} if for every sheaf $\mathcal{F}$ of $\mathbb{F}_p$-vector
spaces (and thus more generally every $p$-power torsion sheaf) on $\mathcal{T}$,
the groups $H^i(t, \mathcal{F}) =0 $ for $i > d$.

\begin{proposition} 
Suppose moreover that $\mathcal{T}$ has a basis $\mathcal{B}$ consisting of objects 
of mod $p$ cohomological
dimension $\leq d$ for some fixed $d$. 
Then: 
\begin{enumerate}
\item  
$\mathcal{D}(\mathcal{T})_{\ptors}$ is left complete. 
\item
$\mathcal{D}(\mathcal{T})_{\ptors}$ is the derived $\infty$-category
of the heart (of $p$-power torsion abelian sheaves on $\mathcal{T}$). 
\item 
The $\infty$-category 
$\mathcal{D}(\mathcal{T})_{\ptors}^{\leq 0}$
of connective objects in 
$\mathcal{D}(\mathcal{T})_{\ptors}$
is compactly generated by the
objects $\mathbb{Z}/p^n [h_t], t \in \mathcal{B}, n \geq 0$, for $h_t$ the sheafification of the
representable presheaf (of sets) defined by $t$. Moreover, $\mathcal{D}(\mathcal{T})_{\ptors}$
is generated as a localizing subcategory by the 
$\mathbb{Z}/p^n [h_t], t \in \mathcal{B}, n \geq 0$. 
\end{enumerate}
\label{derivedTptors}
\end{proposition}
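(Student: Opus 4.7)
The plan is to derive all three claims from the bounded cohomological dimension hypothesis, working in the order (3) $\Rightarrow$ (1) $\Rightarrow$ (2). The underlying idea is that the objects $\mathbb{Z}/p^n[h_t]$ function as ``almost projective'' compact generators: they lie in the heart, and their Ext groups against heart objects vanish above a fixed degree $d$.

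To establish compactness in part (3), I would compute, for $t \in \mathcal{B}$ and $n \geq 0$,
\[ \mathrm{RHom}_{\mathcal{D}(\mathcal{T})}(\mathbb{Z}/p^n[h_t], \mathcal{F}) \simeq \mathrm{fib}\bigl(p^n \colon R\Gamma(t, \mathcal{F}) \to R\Gamma(t, \mathcal{F})\bigr), \]
using the cofiber sequence $\mathbb{Z}[h_t] \xrightarrow{p^n} \mathbb{Z}[h_t] \to \mathbb{Z}/p^n[h_t]$ together with the identification $\mathrm{RHom}(\mathbb{Z}[h_t], -) \simeq R\Gamma(t, -)$. By the dimension hypothesis, $R\Gamma(t, -)$ has cohomological amplitude $\leq d$ on $p$-power torsion objects, and by coherence of $\mathcal{T}$ it commutes with filtered colimits. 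Together these give compactness of $\mathbb{Z}/p^n[h_t]$ in $\mathcal{D}(\mathcal{T})_{\ptors}^{\leq 0}$. For generation, every object of the heart admits a surjection from $\bigoplus_i \mathbb{Z}/p^{n_i}[h_{t_i}]$ (obtained by covering local sections by representables); combined with convergence of Postnikov towers in any Grothendieck prestable $\infty$-category, this gives that the $\mathbb{Z}/p^n[h_t]$ generate $\mathcal{D}(\mathcal{T})_{\ptors}^{\leq 0}$. The ``localizing subcategory'' claim for $\mathcal{D}(\mathcal{T})_{\ptors}$ then follows by stabilization.

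For left completeness (part 1), I would test vanishing of $\lim_k \tau^{\leq -k} X$ against each compact generator $C = \mathbb{Z}/p^n[h_t]$. By the amplitude bound, $\mathrm{RHom}(C, \tau^{\leq -k} X)$ is cohomologically concentrated in degrees $\leq d - k$, so in any fixed cohomological degree it vanishes for $k$ sufficiently large; passing to the limit gives $\mathrm{RHom}(C, \lim_k \tau^{\leq -k} X) = 0$. Since the $\mathbb{Z}/p^n[h_t]$ generate, the limit itself is zero.

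For part (2), I would invoke a characterization along the lines of \cite[App.~C]{SAG}: a left-complete Grothendieck prestable $\infty$-category that is compactly generated by 0-truncated objects whose Ext dimension in the heart is bounded by some fixed integer is equivalent to the connective derived $\infty$-category of its heart; stabilization then gives the unbounded identification. The main obstacle will be pinning down the precise SAG criterion and verifying its hypotheses, since the $\mathbb{Z}/p^n[h_t]$ are \emph{not} projective in the heart---only of bounded Ext dimension---so a naive projective-resolution argument does not apply. The ``locally regular coherent'' framework alluded to in the introduction (\Cref{locregcoh}) is precisely what packages this criterion cleanly.
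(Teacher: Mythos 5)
The overall shape is right—compute $\rhom(\mathbb{Z}/p^n[h_t], -)$ via the fiber of $p^n$ on $R\Gamma(t,-)$, use the amplitude bound and coherence for compactness, cover heart objects by sums of generators, and then read off the rest—but there is a genuine circularity in the order you propose. You invoke ``convergence of Postnikov towers in any Grothendieck prestable $\infty$-category'' to get generation in part (3), and then use generation (together with the amplitude bound) to prove left completeness in part (1). But Postnikov towers do \emph{not} automatically converge in a Grothendieck prestable $\infty$-category; left completeness is precisely what part (1) asserts and precisely what needs an independent input. The same issue infects the compactness step: your assertion that $R\Gamma(t,-)$ has amplitude $\leq d$ and commutes with filtered colimits on \emph{all} of $\mathcal{D}(\mathcal{T})_{\ptors}$, rather than merely on bounded objects, already requires a convergent Postnikov tower for unbounded $p$-power torsion objects (otherwise one cannot pass from the bounded case). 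So (3) as you wrote it secretly assumes (1), and (1) as you wrote it uses (3).

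The paper avoids this by front-loading the convergence: it first applies the criterion of \cite[Prop.~2.10]{CM21} directly to the hypotheses---that the $\mathbb{Z}[h_t]$, $t\in\mathcal{B}$, have uniformly bounded cohomological dimension with $p$-power torsion coefficients---to deduce that any $X\in\mathcal{D}(\mathcal{T})_{\ptors}$ is the limit of its Postnikov tower \emph{as a first step}, independently of any generation or compactness statement. Only then does it run the argument you outline: pass up the Postnikov tower to show $R\Gamma(t,X)$ is $p$-power torsion and commutes with filtered colimits, deduce compactness, prove $0$-compliciality, and apply \cite[Prop.~C.5.4.5]{SAG} and \cite[Cor.~C.2.1.7]{SAG} to get parts (2) and (3). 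To repair your proposal you need to break the circle the same way: establish the Postnikov convergence input directly from the cohomological dimension hypothesis (via \cite[Prop.~2.10]{CM21} or an equivalent argument), and then your amplitude/compactness/generation reasoning goes through. One smaller remark: surjecting onto heart objects from sums of $\mathbb{Z}/p^n[h_t]$ is not yet $0$-compliciality---you need, for an arbitrary connective $X$, a discrete object mapping to $X$ itself (not merely to $H^0(X)$) surjectively on $H^0$; this is handled by taking the sum over \emph{all} maps $\mathbb{Z}/p^n[h_t]\to X$ and observing that local sections of $H^0(X)$ lift to such maps after refining the cover.
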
 
\begin{proof} 
Fix $X \in \mathcal{D}(\mathcal{T})_{\ptors}$. In
$\mathcal{D}(\mathcal{T})$, the object $X$ is the inverse limit of
its Postnikov tower thanks to \cite[Prop.~2.10]{CM21}, since by our
assumptions the objects $\mathbb{Z}[h_t], t\in \mathcal{B} $ have cohomological
dimension $\leq d$ with $p$-power torsion coefficients. 
Therefore, we find that for any $t \in \mathcal{B}$, the sections $R \Gamma(t,
X) = \mathrm{RHom}_{\mathcal{D}(\mathcal{T})}(\mathbb{Z}[h_t], X) \in
\mathcal{D}(\mathbb{Z})$ is $p$-power torsion: this follows by passage up the
Postnikov tower of $X$, using the cohomological dimension bound on $t$ to see
that the limit stabilizes in any range of degrees. Moreover, the same argument shows that the construction $X \mapsto R \Gamma(t, X)$
therefore commutes with filtered colimits, from
$\mathcal{D}(\mathcal{T})_{\ptors} $ to $\mathcal{D}(\mathbb{Z})$. 
This implies that 
$\{\mathbb{Z}/p^n [h_t], t \in \mathcal{B}, n \geq 0\}$ are compact in 
$\mathcal{D}(\mathcal{T})_{\ptors}$. 

We now show that $\mathcal{D}(\mathcal{T})_{\ptors}$ is 
0-complicial \cite[Def.~C.5.3.1]{SAG}, i.e.,  for any object $X \in
\mathcal{D}(\mathcal{T})^{\leq 0}_{\ptors}$, there exists a discrete object $X'$ 
(which will in fact be a sum of $\mathbb{Z}/p^n[h_t]$'s)
and a map $X' \to X$
inducing a surjection on $H^0$. 
For every $t \in \mathcal{B}$, any map $\mathbb{Z}[h_t] \to X$ necessarily
factors over $\mathbb{Z}/p^n[h_t] \to X$ for some $n \gg 0$, by the above. 
It follows that the 
natural map
\[ \bigoplus_{n \geq 0, t \in \mathcal{B}, f: \mathbb{Z}/p^n[h_t] \to X}
\mathbb{Z}/p^n[h_t] \to X  \]
induces a surjection on $H^0$, and the former is discrete. This gives the
desired 0-complicial statement. Since $\mathcal{D}(\mathcal{T})_{\ptors}$ is clearly left separated, we find by 
\cite[Prop.~C.5.4.5]{SAG} (see also \cite[Rem.~C.5.4.11]{SAG}) that
$\mathcal{D}(\mathcal{T})_{\ptors}$
is the derived $\infty$-category of its heart. 
Moreover, \cite[Cor.~C.2.1.7]{SAG} now shows that the $\mathbb{Z}/p^n[h_t], t \in
\mathcal{B}, n \geq 0$ generate $\mathcal{D}(\mathcal{T})_{\ptors}^{\leq 0}$ under
colimits (and the full 
$\mathcal{D}(\mathcal{T})_{\ptors}$ as a localizing subcategory).

Since the abelian category of $p$-power torsion sheaves on $\mathcal{T}$ 
is generated by the objects $\mathbb{Z}/p^n[h_t], t \in \mathcal{B}, n \geq 0$,
we find from \cite[Ex.~2.21]{CM21} that 
$\mathcal{D}(\mathcal{T})_{\ptors}$
is Postnikov complete. 
\end{proof}

\begin{example}[The small \'etale site] 
\label{smalletalesite}
Let $X$ be any qcqs $\mathbb{F}_p$-scheme. The \'etale site $X_{\mathrm{et}}$
has a basis  of objects of
mod $p$ cohomological dimension $\leq 1$. 
In fact, for any affine $\mathbb{F}_p$-scheme $U$, and any sheaf $\mathcal{F}$ of $\mathbb{F}_p$-vector
spaces on $U_{\mathrm{et}}$, 
we have $H^i(U, \mathcal{F}) = 0$ for $i > 1$; we may therefore take the
affines as the desired basis. 

To see this claim, we may assume that $\mathcal{F}$ is constructible by writing 
$\mathcal{F}$ as a filtered colimit of constructible sheaves \cite[Tag
03SA]{stacks-project} and using \cite[Tag 073E]{stacks-project}. 
By \cite[Tag 09YU]{stacks-project}, we may then assume that $\mathcal{F}$ is pulled
back from the spectrum of a finitely generated $\mathbb{F}_p$-algebra. 
Using \cite[Cor.~5.10,
Exp.~VII]{SGA4}, we are thus reduced to proving the cohomological dimension
bound in the noetherian case, where it is 
\cite[Th.~5.1, Exp.~X]{SGA4}. 

Thus, we obtain the above conclusions for $\mathcal{D}(
X_{\mathrm{et}})_{\ptors}$. Namely, 
$\mathcal{D}(
X_{\mathrm{et}})_{\ptors}$
is the derived $\infty$-category of the abelian category of $p$-power torsion
sheaves of abelian groups on $X$, and it is
left complete and compactly generated. 
\end{example} 

\begin{example}[The perfect  site] 
\label{perfproet}
Let $X$ be a qcqs perfect $\mathbb{F}_p$-scheme. 
Let $\schpp_X$ (also written $\schpp_R$ if $X = \spec(R)$ is affine) be the site of all perfect qcqs $X$-schemes, equipped with
the \'etale topology. 
Again, as in \Cref{smalletalesite}, the basis of affines shows that the above
conclusions hold: $\mathcal{D}(\schpp_X)_{\ptors}$ is the derived
$\infty$-category 
of the abelian category of $p$-power torsion sheaves of abelian groups on $\schpp_X$, and it is left
complete and compactly generated. 
\end{example}

We let $(\pi^*, \pi_*)$ denote 
the natural adjunction
\[ (\pi^*, \pi_*): \mathcal{D}( X_{\mathrm{et}}) \rightleftarrows
\mathcal{D}( \schp_X).  \]
from the derived $\infty$-category of the \'etale site to the derived
$\infty$-category of 
$\schp_R$, arising from the inclusion of sites $X_{\mathrm{et}}
\subset \schp_X$. 

\begin{proposition} 
\label{restrictedadjunction}
For any $\mathcal{F} \in \mathcal{D}(X_{\mathrm{et}})$, the adjunction
map 
$\mathcal{F} \to \pi_* \pi^* \mathcal{F}$ is an equivalence.   
Moreover, $\pi_*$ is cocontinuous. 
\end{proposition}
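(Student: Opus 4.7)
The plan is to exploit the key geometric fact that, because $X$ is perfect, the inclusion $i: X_{\mathrm{et}} \hookrightarrow \schp_X$ is fully faithful with a particularly strong cover-comparison property: any étale morphism $V \to U$ with $U \in X_{\mathrm{et}}$ and $V$ perfect automatically lies in $X_{\mathrm{et}}$, since the composite $V \to U \to X$ is étale. Consequently $i$ is both continuous and cocontinuous as a functor of sites, and moreover every étale cover (and indeed every étale hypercover) of an object of $X_{\mathrm{et}}$ taken in $\schp_X$ is already an étale (hyper)cover in $X_{\mathrm{et}}$.

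For the unit equivalence, I would first identify $\pi_*$ with honest restriction along $i$, so that $(\pi_* \mathcal{G})(U) = R\Gamma(U, \mathcal{G})$ for $U \in X_{\mathrm{et}}$. The functor $\pi^*$ is the hyper-sheafification of the presheaf left Kan extension $i_!$. Because $i$ is fully faithful, the identity $U \to U$ is initial in the indexing category $U/X_{\mathrm{et}}$ for $U \in X_{\mathrm{et}}$, so $(i_! \mathcal{F})(U) = \mathcal{F}(U)$ already at the presheaf level. The crucial step is that hyper-sheafification for the $\schp_X$-topology does not alter these values on $X_{\mathrm{et}}$-objects: by the cover-comparison property, hyperdescent against $\schp_X$-hypercovers of $U$ unravels to hyperdescent against $X_{\mathrm{et}}$-hypercovers, which $\mathcal{F}$ already satisfies. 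Hence $(\pi^* \mathcal{F})(U) = \mathcal{F}(U)$, giving the unit equivalence.

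For cocontinuity of $\pi_*$, the cleanest route is via the cocontinuity of the site inclusion itself: a cocontinuous functor of sites produces a second morphism of topoi in the opposite direction whose inverse-image functor coincides with restriction along $i$, so $\pi_*$ acquires a further right adjoint and therefore preserves all small colimits. Alternatively, one argues directly: a colimit $\mathrm{colim}_\alpha \mathcal{G}_\alpha$ in $\mathcal{D}(\schp_X)$ is the hyper-sheafification of the presheaf colimit, and evaluated at $U \in X_{\mathrm{et}}$ this sheafification only involves hypercovers of $U$ taken in $\schp_X$; by the cover-comparison these lie in $X_{\mathrm{et}}$, so the $\schp_X$-hyper-sheafification and the $X_{\mathrm{et}}$-hyper-sheafification agree on $X_{\mathrm{et}}$-objects, yielding $\pi_*(\mathrm{colim}_\alpha \mathcal{G}_\alpha) = \mathrm{colim}_\alpha \pi_* \mathcal{G}_\alpha$. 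The main technical point is transferring the classical cocontinuous-site formalism, whose standard statements are for ordinary sheaves, to the hypercomplete $\infty$-categorical setting; for this I would appeal to the bounded cohomological dimension established in \Cref{derivedTptors}, which makes the derived comparison routine once the classical statement is in hand.
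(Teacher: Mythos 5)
Your geometric observation is exactly right and is the content of the classical comparison the paper cites: since $X$ is perfect and \'etale maps over a perfect scheme land in the small \'etale site, the inclusion $i\colon X_{\mathrm{et}}\hookrightarrow\schp_X$ is continuous, cocontinuous, and fully faithful, and the unit isomorphism and cocontinuity of $\pi_*$ on \emph{abelian} sheaves follow from the general theory of such site morphisms (this is what the paper's references to SGA4 Exp.~VII and the Stacks project tags 00XU, 03YU deliver; your computation that $(i_! \mathcal{F})(U)=\mathcal{F}(U)$ for $U\in X_{\mathrm{et}}$ is the co-Yoneda/full-faithfulness argument, correctly identified). The difference is that the paper does not re-derive this but simply cites it.

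The gap is in the last step, where you propose to ``transfer the classical cocontinuous-site formalism to the hypercomplete $\infty$-categorical setting'' by appealing to the bounded cohomological dimension established in \Cref{derivedTptors}. That proposition establishes the bound (and hence Postnikov/left completeness) only for the $p$-power torsion subcategory $\mathcal{D}(\mathcal{T})_{p\text{-}\mathrm{tors}}$, whereas \Cref{restrictedadjunction} is stated for all of $\mathcal{D}(X_{\mathrm{et}})$. A Postnikov-tower argument based on cohomological dimension would therefore only yield the $p$-torsion case, and the full $\mathcal{D}(X_{\mathrm{et}})$ and $\mathcal{D}(\schp_X)$ need not be left complete. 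The paper avoids this by taking a different route from hearts to derived $\infty$-categories: because $\pi^*$ and $\pi_*$ are both exact and cocontinuous on the underlying Grothendieck abelian categories, they extend canonically to a $t$-exact, cocontinuous adjunction between the derived $\infty$-categories with the unit equivalence persisting, via the formalism of \cite[Th.~C.5.4.9]{SAG} (equivalently \cite[Prop.~7.1]{CM21}); no cohomological dimension hypothesis is needed. You should either swap in that machinery at the final step, or restrict the claim to $p$-torsion objects, which is all that is used later in the paper anyway.
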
 
\begin{proof} 
For discrete objects, the result 
follows classically from the comparisons between big and small sites of \cite[Exp.~VII,
Sec.~4]{SGA4} 
or \cite[Tag 00XU]{stacks-project}. 
The adjoint functors on hearts pass to adjoint functors on derived
$\infty$-categories
(note that both $\pi^*, \pi_*$ are cocontinuous and exact on abelian
		categories), whence the claim,
cf.~\cite[Th.~C.5.4.9]{SAG}. 
This also follows from \cite[Prop.~7.1]{CM21}. 
\end{proof} 

\begin{remark} 
\label{imageofpi}
We can identify the image of the fully faithful embedding $\pi^*$ of
\Cref{restrictedadjunction}, at least on $p$-power torsion objects. 
An object $\mathcal{F} \in \mathcal{D}(\schpp_X)_{\ptors}$ belongs to
the image of $\pi^*$ if and only if: 
\begin{enumerate}
\item As a functor on rings, $\mathcal{F}$ commutes with
filtered colimits. 
\item
For any local homomorphism of  perfect, strictly henselian local
rings $R_1 \to R_2$ over $X$, the map $\mathcal{F}(R_1) \to \mathcal{F}(R_2)$ is
an equivalence. 
\end{enumerate}
In fact, the pullback of any object of
$\mathcal{D}(X_{\mathrm{et}})_{\ptors}$ clearly has these
properties, since \'etale cohomology commutes with filtered colimits of rings
\cite[Cor.~5.8, Exp.~VII]{SGA4}
and because of mod $p$ cohomological dimension
$\leq 1$ for affines.
Conversely, if $\mathcal{F}$ has these properties, then the adjunction map 
$\pi^* \pi_* \mathcal{F} \to \mathcal{F}$ is an equivalence on strictly
henselian local perfect rings over $X$ (already by (2) alone), and hence in
general by \'etale descent 
of both $\pi^* \pi_* \mathcal{F}, \mathcal{F}$
and by (1) to identify the stalks of both sides. 
\end{remark}

We include for completeness some additional results on the structure of
$\mathcal{D}(X_{\mathrm{et}})_{\ptors}$ which will not be used in the construction
of the Riemann--Hilbert correspondence. In particular, we identify the compact
objects precisely, using the following definition. \begin{definition} 
\label{locregcoh}
We say that a Grothendieck abelian category $\mathcal{A}$ is \emph{locally
regular coherent} if: 
\begin{enumerate}
\item $\mathcal{A} $ is compactly generated. 
\item The subcategory $\mathcal{A}_0 \subset \mathcal{A}$ of compact objects is
closed under finite limits (so it is an abelian subcategory). 
\item Given
$X \in \mathcal{A}_0$, there exists an integer $d_X$ such that one has $\mathrm{Ext}^i_{\mathcal{A}}(X, Y) =0$ for $i
>  d_X$ and for all $Y \in \mathcal{A}_0$. 
\item There exists an integer $d$ and a subcategory $\mathcal{A}_0' \subset \mathcal{A}_0$ such 
that if $X \in \mathcal{A}_0', Y \in \mathcal{A}_0$, then
$\mathrm{Ext}^i_{\mathcal{A}}(X, Y) = 0$ for $i > d$. 
The subcategory $\mathcal{A}_0'$ generates $\mathcal{A}$ under colimits. 
\end{enumerate}
\end{definition} 

The category of modules over any coherent ring such that any finitely presented module is of finite
projective dimension (e.g., the infinite-dimensional polynomial algebra
$\mathbb{F}_p[x_1, x_2, x_3, \dots ]$)
is an example of a locally regular coherent Grothendieck abelian category (take
$\mathcal{A}_0'$ to be the category of finitely generated free modules).

\begin{proposition} 
Suppose $\mathcal{A}$ is a locally regular coherent Grothendieck abelian
category. Then: 
\begin{enumerate}
\item $\mathcal{D}(\mathcal{A})$ is Postnikov complete.  
\item The compact objects of $\mathcal{D}(\mathcal{A})$ consist of those objects
which are $t$-bounded and all of whose cohomology groups belong to
$\mathcal{A}_0$. 
\end{enumerate}
\label{locregcritabelian}
\end{proposition}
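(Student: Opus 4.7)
The strategy is to upgrade the Ext-vanishing in conditions (3) and (4) from targets in $\mathcal{A}_0$ to arbitrary targets in $\mathcal{A}$; once this is done, both statements follow from standard generator-based criteria, including the Postnikov-convergence result \cite[Prop.~2.10]{CM21} used in the proof of \Cref{derivedTptors}.

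The key input is that for any $X \in \mathcal{A}_0$, the functor $\mathrm{Ext}^i_{\mathcal{A}}(X, -)$ commutes with filtered colimits in $\mathcal{A}$ for every $i \geq 0$. This is a classical fact for locally coherent Grothendieck abelian categories: condition (2) ensures that any $X \in \mathcal{A}_0$ admits a resolution $\cdots \to P_1 \to P_0 \to X \to 0$ with each $P_j \in \mathcal{A}_0$ a finite direct sum of compact generators from $\mathcal{A}_0'$ (obtained by iteratively taking kernels of surjections from compacts; the kernels remain in $\mathcal{A}_0$ by closure under finite limits). Dimension shifting along this resolution, with the base case $i = 0$ coming from compactness of $X$ in $\mathcal{A}$, yields the lemma. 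I expect this step to be the main obstacle, though the result is standard.

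Granted the lemma, condition (4) yields $\mathrm{Ext}^i_{\mathcal{A}}(X, Y) = 0$ for every $X \in \mathcal{A}_0'$, every $Y \in \mathcal{A}$, and $i > d$, by writing $Y$ as a filtered colimit of compacts. Viewed in $\mathcal{D}(\mathcal{A})^{\leq 0}$, the objects of $\mathcal{A}_0'$ then form a generating family of uniform cohomological dimension $\leq d$ (since $\mathcal{A}_0'$ generates the heart under colimits and the heart generates $\mathcal{D}(\mathcal{A})^{\leq 0}$). Applying \cite[Prop.~2.10]{CM21} yields Postnikov completeness, which is (1).

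For (2), the analogous application of the lemma to condition (3) shows that each $X \in \mathcal{A}_0$ is compact in $\mathcal{D}(\mathcal{A})$: the functor $\mathrm{RHom}_{\mathcal{D}(\mathcal{A})}(X, -)$ is assembled from $\mathrm{Ext}^i_{\mathcal{A}}(X, -)$ groups that each commute with filtered colimits and vanish for $i > d_X$, and the Postnikov completeness from (1) extends this commutation from the heart to all of $\mathcal{D}(\mathcal{A})$. By iterated fiber sequences on the Postnikov tower, every $t$-bounded complex with cohomologies in $\mathcal{A}_0$ is then compact. Conversely, any compact $Z \in \mathcal{D}(\mathcal{A})$ lies in the thick closure of the compact generators $\{X[n] : X \in \mathcal{A}_0', n \in \mathbb{Z}\}$, which consists of $t$-bounded objects (finite colimits and retracts of single-degree-concentrated generators remain $t$-bounded); induction on this construction, together with the long exact cohomology sequence of cofiber sequences and the fact that $\mathcal{A}_0$ is an abelian subcategory of $\mathcal{A}$ closed under retracts, shows that each $H^i(Z)$ lies in $\mathcal{A}_0$.
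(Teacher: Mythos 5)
Your overall architecture is the right one — upgrade the Ext-vanishing to arbitrary targets by commutation with filtered colimits, get Postnikov completeness from uniformly bounded generators, compactness of $\mathcal{A}_0$-objects, and a thick-subcategory argument for the converse — and matches the paper's. The gap is in the step you yourself flag as "the main obstacle": the claim that $\mathrm{Ext}^i_{\mathcal{A}}(X,-)$ commutes with filtered colimits for $X\in\mathcal{A}_0$ does \emph{not} follow by "dimension shifting along" a resolution $\cdots\to P_1\to P_0\to X\to 0$ with $P_j\in\mathcal{A}_0$, precisely because the $P_j$ are compact but not projective. The standard dimension shift uses that $\mathrm{Ext}^{\geq 1}(P_j,-)=0$; here that fails, so the long exact sequence associated to $0\to K\to P_0\to X\to 0$ relates $\mathrm{Ext}^i(X,-)$ to $\mathrm{Ext}^i(P_0,-)$ and $\mathrm{Ext}^i(K,-)$ at the \emph{same} cohomological degree $i$, and the induction does not close: to control the level-$i$ terms for $X$ you need to already control them for $P_0$ and $K$, which are arbitrary objects of $\mathcal{A}_0$. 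Replacing the short exact sequence by stupid truncations $\sigma_{\geq -n}P_\bullet$ runs into the same issue, since those complexes are built from $P_j[j]$ and again require bounds on $\mathrm{Ext}^{\geq 1}(P_j,-)$.

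The paper sidesteps this by invoking \cite[Cor.~C.6.5.9]{SAG}, which asserts that conditions (1)--(2) make $\mathcal{D}(\mathcal{A})^{\leq 0}$ a \emph{coherent} Grothendieck prestable $\infty$-category in the sense of \cite[Def.~C.6.5.1]{SAG}; from \cite[Prop.~C.6.5.6]{SAG} one then deduces that each object of $\mathcal{A}_0$ is pseudo-coherent (compact in every truncation $\mathcal{D}(\mathcal{A})^{\geq -n}$), which is the precise form of the commutation you need. This is not an elementary dimension shift; it is the substantive content of Lurie's theory of coherent prestable $\infty$-categories, and your proof would need to cite or reprove it. The paper also records, en route, that $\mathcal{A}_0$ is closed under extensions (a consequence of the pseudo-coherent objects being closed under finite limits), which you implicitly use in the converse direction of (2) when arguing via long exact sequences that cohomology of objects in the thick closure stays in $\mathcal{A}_0$; you mention closure under retracts but not under extensions, so make that explicit. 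Once the pseudo-coherence lemma is granted, the rest of your argument — applying the Postnikov-convergence criterion of \cite{CM21} to the generators $\mathcal{A}_0'$ of uniform cohomological dimension $\leq d$, deducing compactness of each $X\in\mathcal{A}_0$ in $\mathcal{D}(\mathcal{A})$, and the thick-subcategory converse — is sound and is the same as the paper's.
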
 
\begin{proof} 
Let us first show that the objects of $\mathcal{A}_0 \subset \mathcal{A}$ are
pseudo-coherent (also called almost compact), i.e.,  they are compact in each truncation
$\mathcal{D}(\mathcal{A})^{\geq -n}$. In fact, by 
\cite[Cor.~C.6.5.9]{SAG}, the Grothendieck prestable $\infty$-category
$\mathcal{D}(\mathcal{A})^{\leq 0}$ is coherent in the sense of
\cite[Def.~C.6.5.1]{SAG}, and in particular 
any object of $\mathcal{A}$ can be written as a filtered colimit of objects
in $\mathcal{A}$ which are 
pseudo-coherent objects in $\mathcal{D}(\mathcal{A})^{\leq 0}$,
cf.~\cite[Prop.~C.6.5.6]{SAG}; by compactness in the heart, this means that any
object of $\mathcal{A}_0$ is necessarily pseudo-coherent. 
Since the subcategory of pseudo-coherent objects 
of $\mathcal{D}(\mathcal{A})^{\leq 0}$
is closed under finite limits (again by coherence, \cite[Def.~C.6.5.1]{SAG}) and
therefore under extensions,\footnote{An extension is obtained as a fiber of a
map to the suspension.} we conclude that $\mathcal{A}_0$ is closed under
extensions.

Now by pseudo-coherence, it follows that if $X \in \mathcal{A}_0, Y \in
\mathcal{A}$, then $\mathrm{Ext}^i_{\mathcal{A}}(X, Y) =0 $ for $i > d_X$.
Moreover, if $X \in \mathcal{A}_0'$, then the vanishing holds for $i > d$.
This also follows by pseudo-coherence of $X$, since we have assumed the
vanishing for $Y \in \mathcal{A}_0$. 

The first claim follows from \cite[Ex.~2.21]{CM21} since we have the generating
objects $\mathcal{A}_0'$, which we have just seen to be of cohomological
dimension $\leq d$. 

Given $X \in \mathcal{A}_0$, we claim that $X$ is actually compact in
$\mathcal{D}(\mathcal{A})$. In fact, we have seen that $X$ is pseudo-coherent.
Since $X$ has finite cohomological dimension by assumption, and since
(as just proved) Postnikov
towers converge in $\mathcal{D}(\mathcal{A})$, we conclude that $X$ is compact
in $\mathcal{D}(\mathcal{A})$. Consequently, any $t$-bounded object in
$\mathcal{D}(\mathcal{A})$ all of whose cohomology groups belong to
$\mathcal{A}_0$ is compact in 
$\mathcal{D}(\mathcal{A})$; the converse claim follows now since $\mathcal{A}_0$
yields a set of compact generators
of $\mathcal{D}(\mathcal{A})$ as a localizing subcategory, whence any compact
object belongs to the thick subcategory generated by $\mathcal{A}_0$. 
\end{proof}

\begin{proposition} 
\label{compactobjofderivedcat}
For any qcqs $\mathbb{F}_p$-scheme $X$,
the abelian category of $p$-power torsion \'etale sheaves on $X$ is locally
regular coherent. 
Thus, 
the compact objects of 
$\mathcal{D}(
X_{\mathrm{et}})_{\ptors}$ are precisely the objects which are bounded in
the $t$-structure and such that each cohomology group is constructible. 
\end{proposition}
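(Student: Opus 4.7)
The plan is to verify the four axioms of \Cref{locregcoh} for $\mathcal{A}$, the abelian category of $p$-power torsion \'etale sheaves on $X$, and then invoke \Cref{locregcritabelian} together with $\mathcal{D}(\mathcal{A}) \simeq \mathcal{D}(X_{\mathrm{et}})_{\ptors}$ from \Cref{derivedTptors}(2) to obtain the characterization of compact objects. For axioms (1)--(2), I plan to apply \Cref{derivedTptors}(3) with the basis of affine \'etales from \Cref{smalletalesite}: the sheaves $j_! \mathbb{Z}/p^n$, for $j\colon U \to X$ affine \'etale and $n \geq 1$, are then compact in $\mathcal{A}$ and generate it under colimits, and classically (SGA~4, Exp.~IX) the compact objects of $\mathcal{A}$ are precisely the constructible sheaves, which form an abelian subcategory closed under kernels, cokernels, and extensions. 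This identifies $\mathcal{A}_0$ with the constructible sheaves and settles (1)--(2).

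For axiom~(4), the plan is to take $\mathcal{A}_0' := \{j_!\mathbb{Z}/p^n : j \text{ affine \'etale}, n \geq 1\}$ with bound $d = 2$. Generation of $\mathcal{A}$ by $\mathcal{A}_0'$ follows again from \Cref{derivedTptors}(3), and by adjunction
\[ \mathrm{RHom}(j_!\mathbb{Z}/p^n,\, \mathcal{G}) \simeq R\Gamma\bigl(U,\, [\mathcal{G}|_U \xrightarrow{p^n} \mathcal{G}|_U]\bigr), \]
whose cohomology vanishes in degrees $> 2$ because Artin--Schreier gives $H^i(U, -) = 0$ for $i > 1$ on any $p$-power torsion sheaf over the affine $\mathbb{F}_p$-scheme $U$.

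The main obstacle is axiom~(3): for each constructible $\mathcal{F}$, one must produce a bound $d_{\mathcal{F}}$ on $\mathrm{Ext}^i(\mathcal{F}, -)$ uniform in the constructible target. My plan is to use the local-global identity $\mathrm{Ext}^i(\mathcal{F}, \mathcal{G}) = H^i(X, R\mathcal{H}om(\mathcal{F}, \mathcal{G}))$ and combine (i) finiteness of the $\mathbb{F}_p$-\'etale cohomological dimension $D_X$ of qcqs $X$ (obtained by Artin--Schreier on a finite affine cover, with iteration for the non-separated case) with (ii) a bounded cohomological amplitude estimate for the internal $R\mathcal{H}om(\mathcal{F}, -)$, produced from a finite stratification $X = \coprod V_l$ on which $\mathcal{F}$ is locally constant with finite fibers. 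Noetherian approximation ($X = \varprojlim_\alpha X_\alpha$ with $X_\alpha$ of finite type over $\mathbb{F}_p$, affine transitions) supplies such a stratification after spreading $\mathcal{F}$ out to a finite-type model $X_{\alpha_0}$, where the amplitude bound on $R\mathcal{H}om$ is classical. With (3) in place, \Cref{locregcritabelian} immediately identifies the compact objects of $\mathcal{D}(\mathcal{A}) \simeq \mathcal{D}(X_{\mathrm{et}})_{\ptors}$ with the $t$-bounded objects whose cohomology groups are constructible.
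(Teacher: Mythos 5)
Your proposal follows essentially the same route as the paper: identify $\mathcal{A}_0$ with the constructible $p$-power torsion sheaves (compact objects and an abelian subcategory, via SGA~4, Exp.~IX), take $\mathcal{A}_0' = \{j_!\mathbb{Z}/p^n\}$ with the cohomological-dimension-$2$ bound from affines, and for axiom~(3) d\'evissage a constructible $\mathcal{F}$ along a stratification so as to reduce to a locally closed immersion with locally constant coefficients, using the bound on mod~$p$ cohomological dimension of qcqs $\mathbb{F}_p$-schemes as the ultimate input.

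The one place where your sketch is looser than the paper is axiom~(3). You propose to bound the amplitude of the internal $R\mathcal{H}om(\mathcal{F}, -)$ on $X$ and then apply $H^i(X, -)$, invoking a ``classical'' bound on a finite-type model $X_{\alpha_0}$. Two caveats. First, the internal $R\mathcal{H}om$ does not commute with pullback along $X \to X_{\alpha_0}$ in any obvious way, so ``classical on $X_{\alpha_0}$'' does not directly transfer; the clean fix is to pull back the \emph{stratification} to $X$ and run the amplitude estimate on $X$ itself, which works but you should say so. Second, the amplitude of $R\mathcal{H}om(\mathcal{F},-)$ on a stratified scheme is controlled by the amplitude of $Rf^!$ for the locally closed strata inclusions $f$ (and of $Rj_*$ for open pieces), and this is in turn controlled by the same cohomological-dimension bound you cite as input (i); so items (i) and (ii) in your sketch are not independent. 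The paper sidesteps the two-step ``amplitude then global sections'' bookkeeping by using the adjunction and duality formula
\[ \mathrm{Ext}^i(f_!\mathcal{F}_0, \mathcal{G}) \simeq H^i\bigl(U,\ \mathcal{F}_0^{\vee} \otimes^{\mathbb{L}} Rf^!\mathcal{G}\bigr), \]
after reducing to $\mathcal{F} = f_!\mathcal{F}_0$ for a locally closed constructible immersion $f\colon U \hookrightarrow X$ with $\mathcal{F}_0$ locally constant (SGA~4, Exp.~IX, Prop.~2.5). This transfers the problem to bounding the cohomological dimension of $U$ and the amplitude of $Rf^!$, both of which are directly controlled by SGA~4, Exp.~X, Th.~5.1. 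Your approach works in the end but the paper's version is a bit more streamlined; if you stay with the $R\mathcal{H}om$ formulation, make the amplitude estimate on $X$ explicit rather than deferring to the finite-type model.
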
 
\begin{proof} 
The abelian category $\mathcal{A}$ of $p$-power torsion sheaves on
$X_{\mathrm{et}}$ is compactly generated
by the constructible $p$-power torsion sheaves, which form an abelian
subcategory $\mathcal{A}_0 \subset \mathcal{A}$, cf.~\cite[Tags
05BE, 095M]{stacks-project} or \cite[Exp.~IX, sec.~2]{SGA4}. For each affine scheme $U$ and \'etale map $j: U \to X$, the object
$j_!(\mathbb{Z}/p^n)$ is of cohomological dimension $\leq 2$ and these objects
generate $\mathcal{A}$ under colimits. 

In light of \Cref{locregcritabelian} (and \Cref{derivedTptors}) with
$\mathcal{A}_0'$ the collection $\left\{j_!(\mathbb{Z}/p^n), j: U \to X
\ \text{\'etale}\right\}$, 
it suffices to show that if $\mathcal{F}$ is a constructible $p$-power
torsion sheaf 
 on $X_{\mathrm{et}}$, then $\mathcal{F}$ has finite
cohomological dimension in $\mathcal{A}$, i.e.,  
there exists $n$ such that $\mathrm{Ext}^i(\mathcal{F}, \mathcal{G}) =0 $ for
any constructible
$p$-power torsion sheaf
$\mathcal{G}$ on $X_{\mathrm{et}}$ and $i > n$. By \cite[Exp.~IX, Prop.~2.5]{SGA4}, we can assume that $\mathcal{F} = f_!
\mathcal{F}_0$ for $f: U \to X$ a locally closed constructible embedding
and $\mathcal{F}_0$ a locally constant constructible sheaf  on $U$. 
Now $Rf^!$ has finite cohomological dimension: in fact, this follows from
factoring $f$ into an open embedding and a closed embedding and using 
\cite[Exp.~X, Th.~5.1]{SGA4} (and \Cref{smalletalesite} in the non-noetherian
case), which shows that any qcqs 
 $\mathbb{F}_p$-scheme has bounded mod $p$ cohomological
dimension.

By duality, the result now follows because (writing $\mathcal{F}_0^{\vee} =
\underline{\mathrm{RHom}}( \mathcal{F}_0, \mathbb{Z})$)
$\mathrm{Ext}^i( \mathcal{F}, \mathcal{G}) = H^i(U,  \mathcal{F}_0^{\vee}
\otimes^{\mathbb{L}}
Rf^! \mathcal{G})$ and $U$ has bounded $\mathbb{F}_p$-cohomological dimension
by \cite[Exp.~X, Th.~5.1]{SGA4} again. 
\end{proof}

\newcommand{\wfr}{{W(R)[F^{\pm 1}]}}

\section{$\wfr$-modules}

Let $R$ be a perfect $\mathbb{F}_p$-algebra. 
The purpose of this section is to introduce the ring $\wfr$ and review the
notion of an \emph{algebraic} $\wfr$-module, following \cite{BLRH}.  
Let $W(R)$ denote the ring of Witt vectors of $R$ with Witt vector Frobenius
$\phi: W(R) \to W(R)$. 

\begin{construction}[{The ring $\wfr$}] 
Let $W(R)[F^{\pm 1}]$  denote the 
\emph{noncommutative} ring over $W(R)$ 
defined as follows: $\wfr$ is spanned freely as  a left module over $W(R)$ by the
powers $F^i, i \in \mathbb{Z}$, with the commutation rule $F^i a = \phi^i(a)
F^i$ for $a \in W(R), i \in \mathbb{Z}$. 
We will also consider the quotient $R[F^{\pm 1}] = \wfr/p$; a module over
$\wfr/p$ is a perfect Frobenius module over $R$ in the sense of \cite{BLRH}.

We will consider the derived $\infty$-category $\mathcal{D}(\wfr)$ of
left $\wfr$-modules, and the full subcategory $\mathcal{D}(\wfr)_{\ptors} \subset
\mathcal{D}(\wfr)$ consisting of objects $M$ with $M[1/p]= 0$.

\end{construction}

\begin{remark} 
\label{relativetensor}
Given  $M \in \mathcal{D}(\wfr)$  and a right
$\wfr$-module $M'$ (more generally, $M'$ could belong to the appropriate
derived $\infty$-category), we can form the relative derived tensor product $M'
\otimes^{\mathbb{L}}_{\wfr} M \in \mathcal{D}(\mathbb{Z})$,
cf.~\cite[Sec.~4.4]{HA} for a very general account.

We will need the following example of this later. 
Given a \emph{perfect} $R$-algebra $R'$, we can also make $W(R')$ into a
\emph{right} $\wfr$-module, where the action of $F$ is by $x \mapsto
\phi^{-1}(x)$. 
If $M \in \mathcal{D}(\wfr)$, then we can describe 
$W(R') \otimes^{\mathbb{L}}_{\wfr} M$ as a cofiber
\begin{equation} W(R') \otimes^{\mathbb{L}}_{\wfr } M  =\mathrm{cofib}(F -1: W(R') \otimes_{W(R)}^{\mathbb{L}}
M  \to W(R') \otimes_{W(R)}^{\mathbb{L}} M),
\label{cofibexpr}
\end{equation}
where $W(R') \otimes^{\mathbb{L}}_{W(R)} M
= W(R')[F^{\pm 1}] \otimes^{\mathbb{L}}_{W(R)[F^{\pm 1}]} M \in \mathcal{D}(
W(R')[F^{\pm 1}])
$ is the extension of scalars. 
In fact, this follows from the short exact sequence of right $W(R)[F^{\pm
1}]$-modules,
\[ 0 \to W(R')[F^{\pm 1}] \xrightarrow{(F-1) \cdot} W(R')[F^{\pm 1}] \to  W(R') \to 0.   \]
\end{remark}

\begin{definition}[{Cf.~\cite[Def.~2.4.1]{BLRH}}] 
A discrete $\wfr$-module $M$ is \emph{algebraic} if $M[1/p] = 0$ and if 
any $x \in M$ satisfies an equation of the form $F^n x  + a_1 F^{n-1} x  +
\dots + a_n x = 0$ for some $a_1, \dots, a_n \in W(R)$ (equivalently, if the
$W(R)$-submodule spanned by $\left\{F^i x\right\}_{i \geq 0}$ is finitely
generated).

An object of $\mathcal{D}(\wfr)$ is \emph{algebraic} if the
cohomology modules are algebraic. We let $\mathcal{D}_{\mathrm{alg}}(\wfr)
\subset \mathcal{D}(\wfr)$ be the full subcategory spanned by the algebraic
objects. 
\end{definition} 

\begin{proposition} 
\label{algebraicmodulesstability}
\begin{enumerate}
\item If $0 \to M' \to M \to M'' \to 0$ is a short exact sequence of
$\wfr$-modules, then  $M$ is algebraic if and only if $M', M''$ are algebraic. 
\item The collection of algebraic $\wfr$-modules is closed under arbitrary 
colimits. 
\end{enumerate}
\end{proposition}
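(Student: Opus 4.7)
The plan is to reduce both parts to two separate checks: preservation of the $p$-power torsion condition $M[1/p] = 0$, and preservation of the pointwise algebraicity condition on elements. The torsion condition is easy in both situations: localization at $p$ is exact, so from a short exact sequence we get $M[1/p] = 0$ iff $M'[1/p] = M''[1/p] = 0$; and for a filtered colimit $M = \varinjlim_i M_i$ with each $M_i[1/p] = 0$, any $x \in M$ is the image of some $x_i$, so $p^n x_i = 0$ for some $n$ gives $p^n x = 0$. So the content lies entirely in analyzing the condition ``every element satisfies a monic $F$-polynomial with $W(R)$-coefficients.''

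For part (1), the forward direction is immediate. If $x \in M'$ and $x$ satisfies a monic relation $F^n x + a_1 F^{n-1} x + \dots + a_n x = 0$ in $M$, the same relation holds in the subobject $M'$ since the inclusion is $\wfr$-linear and injective. Similarly, applying the quotient map to such a relation produces a monic relation on the image in $M''$.

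For the converse, suppose $M', M''$ are algebraic and fix $x \in M$ with image $\bar x \in M''$. Algebraicity of $\bar x$ yields $a_1,\dots,a_n \in W(R)$ with $F^n \bar x + a_1 F^{n-1} \bar x + \dots + a_n \bar x = 0$ in $M''$, so that the element
\[ y \;:=\; F^n x + a_1 F^{n-1} x + \dots + a_n x \]
lies in $M' \subset M$. Applying algebraicity of $M'$ to $y$, we obtain $b_1,\dots,b_m \in W(R)$ with $F^m y + b_1 F^{m-1} y + \dots + b_m y = 0$. Expanding, using the commutation rule $F^i a = \phi^i(a) F^i$ in $\wfr$ to move scalars past powers of $F$ (which is legitimate because we are computing the single element $P(F) Q(F) x$, where $P, Q$ are the two monic polynomials evaluated at $F$ with appropriate twists), produces a monic relation of degree $n+m$ for $x$ in $M$.

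For part (2), suppose $M = \varinjlim_i M_i$ is a filtered colimit of algebraic $\wfr$-modules. Any $x \in M$ is represented by some $x_i \in M_i$, and an algebraicity relation $F^n x_i + a_1 F^{n-1} x_i + \dots + a_n x_i = 0$ holding in $M_i$ maps under the transition to a relation holding in $M$. Combined with preservation of $M[1/p] = 0$ under filtered colimits noted above, this gives algebraicity of $M$. I do not foresee a significant obstacle here; the only very minor subtlety is the bookkeeping in the composition of polynomials in the noncommutative ring $\wfr$ during part (1), but since one works with the single fixed element $x$ throughout, the coefficients simply compose as twisted polynomials.
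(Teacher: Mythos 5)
Your proof is correct, and it takes a genuinely different route from the paper's. The paper handles \Cref{algebraicmodulesstability} by first observing that for a $p$-power-torsion $\wfr$-module $N$, algebraicity is equivalent to requiring that each $p^n$-torsion submodule $N[p^n]$ be an algebraic Frobenius $W_n(R)$-module in the sense of \cite[Def.~9.5.2]{BLRH}, and then invoking \cite[Prop.~9.5.5]{BLRH} for the corresponding stability properties in that setting. You instead argue directly from the definition at the level of $\wfr$-modules. The key point in your converse direction of (1) is sound: if $P(F)\bar{x} = 0$ in $M''$ with $P$ monic of degree $n$, then $y := P(F)x \in M'$, and if $Q(F)y = 0$ in $M'$ with $Q$ monic of degree $m$, then $Q(F)P(F)x = 0$ in $M$; since in the twisted polynomial ring $\wfr$ degrees add and the leading coefficient of a product is the (Frobenius-twisted) product of leading coefficients, $Q(F)P(F)$ is again a monic polynomial of degree $n+m$ with coefficients in $W(R)$. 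The $p$-torsion bookkeeping and the filtered colimit argument in part (2) are also correct. What your approach buys is a self-contained, elementary argument that does not presuppose the $W_n(R)$-level results of \cite{BLRH}; what the paper's approach buys is brevity and consistency with the framework it cites throughout.
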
 
\begin{proof} 
This follows from \cite[Prop.~9.5.5]{BLRH}; note that an $\wfr$-module $N$ with
$N[1/p]=0$ is algebraic if and only if for each $n$, the $p^n$-torsion $N[p^n]$
is an algebraic Frobenius $W_n(R)$-module in the sense of
\cite[Def.~9.5.2]{BLRH}. 
\end{proof}

\begin{corollary} 
The subcategory $\mathcal{D}_{\mathrm{alg}}(\wfr) \subset \mathcal{D}(\wfr)$ is a localizing
subcategory. 
\end{corollary}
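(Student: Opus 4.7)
The plan is to deduce this directly from \Cref{algebraicmodulesstability} by passing through the $t$-structure. First I observe that the hypotheses of \Cref{algebraicmodulesstability} say precisely that the full subcategory $\mathcal{A}_{\mathrm{alg}}$ of algebraic $\wfr$-modules inside the abelian category $\mathcal{A}$ of all $\wfr$-modules is a \emph{Serre subcategory}: the ``if and only if'' in part (1) gives closure under subobjects, quotients, and extensions. It is moreover closed under arbitrary direct sums: finite direct sums are algebraic by the extension statement, and part (2) upgrades this to arbitrary direct sums, since any direct sum is the filtered colimit of its finite sub-sums.

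Next I would show that $\mathcal{D}_{\mathrm{alg}}(\wfr)$ is a stable subcategory of $\mathcal{D}(\wfr)$. Given any fiber sequence $X \to Y \to Z$ in $\mathcal{D}(\wfr)$ with two of the three terms algebraic, the long exact cohomology sequence exhibits each $H^i$ of the remaining term as an extension of a quotient by a subobject of cohomology groups of the other two. Since $\mathcal{A}_{\mathrm{alg}}$ is Serre, each such extension lies in $\mathcal{A}_{\mathrm{alg}}$, so the remaining term is algebraic. Together with the (obvious) closure under shifts, this shows $\mathcal{D}_{\mathrm{alg}}(\wfr)$ is a stable subcategory.

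Finally, for closure under arbitrary small direct sums in $\mathcal{D}(\wfr)$: cohomology commutes with direct sums in $\mathcal{A}$, so for any family $\{M_\alpha\}$ of algebraic objects one has $H^i(\bigoplus_\alpha M_\alpha) = \bigoplus_\alpha H^i(M_\alpha)$, which is algebraic by the closure of $\mathcal{A}_{\mathrm{alg}}$ under direct sums established above. Combined with stability, this gives closure under all small colimits, so $\mathcal{D}_{\mathrm{alg}}(\wfr)$ is localizing. I do not expect any real obstacle; the argument is simply the standard packaging that a Serre subcategory closed under direct sums lifts to a localizing subcategory on derived $\infty$-categories, and the only step that uses the specific definition of algebraic is the input \Cref{algebraicmodulesstability}.
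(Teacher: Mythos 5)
Your proof is correct and takes essentially the same approach as the paper, which simply cites the stability proposition; you have filled in the standard details (Serre subcategory closed under direct sums, long exact sequence in cohomology, cohomology commutes with direct sums) that the paper leaves implicit.
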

\begin{proof} 
The subcategory 
$\mathcal{D}_{\mathrm{alg}}(\wfr) \subset \mathcal{D}(\wfr)$
 is a localizing
subcategory thanks to \Cref{algebraicmodulesstability}. 
\end{proof} 

Moreover, the $t$-structure on $\mathcal{D}(\wfr)$ restricts to a $t$-structure
on 
$\mathcal{D}_{\mathrm{alg}}(\wfr)$, which is clearly left and right complete,
and the subcategory $\mathcal{D}_{\mathrm{alg}}(\wfr)^{\leq 0}$ of connective
objects is Grothendieck prestable,
\cite[Prop.~C.5.2.1]{SAG}. 

\begin{example} 
\label{examplesofalgebraic}
For any monic element $T = F^n + a_1 F^{n-1} + \dots + a_n \in \wfr$ and $j \geq 0$,  the quotient
$\wfr/(p^j, \wfr T)  $ is algebraic. 
This can be checked directly or follows from \cite[Prop.~4.2.1]{BLRH} (first
reducing to $j=1$), since
(in the terminology of \emph{loc.~cit.}) $R[F^{\pm 1}]/R[F^{\pm 1}] T$ is the
perfection of the Frobenius module $R[F]/R[F] T$, which is finite free over
$R$, whence the former is holonomic in the sense of \cite[Def.~4.1.1]{BLRH}. 
\end{example}

\begin{proposition}
The $\infty$-category $\mathcal{D}_{\mathrm{alg}}(\wfr)^{\leq 0} \subset
\mathcal{D}_{\mathrm{alg}}(\wfr)$  is compactly generated 
by the objects of \Cref{examplesofalgebraic}. 
Consequently, 
$\mathcal{D}_{\mathrm{alg}}(\wfr) $ is generated as a localizing subcategory by
the objects
of 
\Cref{examplesofalgebraic}. 
Moreover, 
$\mathcal{D}_{\mathrm{alg}}(\wfr) $ is the derived $\infty$-category of the
abelian category of algebraic Frobenius modules. 
\label{algebraicgenerators}
\end{proposition}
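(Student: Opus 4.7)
The plan is to verify that the objects of \Cref{examplesofalgebraic} compactly generate $\mathcal{D}_{\mathrm{alg}}(\wfr)^{\leq 0}$ via the $0$-compliciality criterion \cite[Prop.~C.5.4.5]{SAG}, following the pattern of the proof of \Cref{derivedTptors}.

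First I would verify compactness. Since $p$ is central in $\wfr$ and right multiplication by $T$ is a map of left $\wfr$-modules, each $\wfr/(p^j, \wfr T)$ arises as the iterated cofiber of $\wfr \xrightarrow{\cdot T} \wfr$ followed by the cofiber of $p^j$ on $\wfr/\wfr T$, so is compact in $\mathcal{D}(\wfr)$. By \Cref{algebraicmodulesstability}(2), $\mathcal{D}_{\mathrm{alg}}(\wfr)$ is closed under filtered colimits in $\mathcal{D}(\wfr)$, so filtered colimits agree and these generators remain compact in $\mathcal{D}_{\mathrm{alg}}(\wfr)$.

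Second, I would verify $0$-compliciality of $\mathcal{D}_{\mathrm{alg}}(\wfr)^{\leq 0}$ in the sense of \cite[Def.~C.5.3.1]{SAG}. Given $M$ in this $\infty$-category and $x \in \pi_0 M$, the element $x$ is annihilated by some $p^j$ (as $M[1/p]=0$) and by some monic $T \in \wfr$ (by algebraicity), so the map $\wfr \to \pi_0 M$ sending $1 \mapsto x$ factors through a map $\wfr/(p^j, \wfr T) \to \pi_0 M$ hitting $x$. Summing over all $x$ yields a discrete object, assembled from the generators of \Cref{examplesofalgebraic}, surjecting onto $\pi_0 M$. Left separation of $\mathcal{D}_{\mathrm{alg}}(\wfr)^{\leq 0}$ is inherited from $\mathcal{D}(\wfr)^{\leq 0}$.

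With these inputs, \cite[Prop.~C.5.4.5]{SAG} and \cite[Rem.~C.5.4.11]{SAG} identify $\mathcal{D}_{\mathrm{alg}}(\wfr)^{\leq 0}$ with the connective part of the derived $\infty$-category of its heart, the abelian category of algebraic Frobenius modules. Compact generation then follows from \cite[Cor.~C.2.1.7]{SAG}, and suspensions of these compact generators generate $\mathcal{D}_{\mathrm{alg}}(\wfr)$ as a localizing subcategory. To upgrade to the unbounded identification, I would use left completeness of $\mathcal{D}_{\mathrm{alg}}(\wfr)$ (inherited from $\mathcal{D}(\wfr)$, since algebraic objects are closed under limits along Postnikov towers) together with Postnikov convergence via \cite[Ex.~2.21]{CM21}, which applies because the generators $\wfr/(p^j, \wfr T)$ admit length-$2$ resolutions by $\wfr$ and hence have uniformly bounded cohomological dimension. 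The main subtlety lies in confirming that the closure properties of \Cref{algebraicmodulesstability} match Lurie's hypotheses; once this is in place the argument is a direct application of \cite[App.~C]{SAG}.
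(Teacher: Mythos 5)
Your overall strategy (verify $0$-compliciality and invoke \cite[Prop.~C.5.4.5]{SAG}, \cite[Rem.~C.5.4.11]{SAG}, \cite[Cor.~C.2.1.7]{SAG}) is the same as the paper's, and the compactness argument in your first paragraph is fine. However, there is a genuine gap in your verification of $0$-compliciality. You produce a map $\wfr/(p^j, \wfr T) \to \pi_0 M$ in the heart, but $0$-compliciality requires a map $\wfr/(p^j, \wfr T) \to M$ in the derived $\infty$-category inducing a surjection on $\pi_0$, and for $M$ not concentrated in degree $0$ there is no reason a map to $\pi_0 M$ from a non-projective discrete module should lift to $M$: the obstruction lives in $\mathrm{Ext}^1_{\wfr}(\wfr/(p^j, \wfr T), \tau^{\leq -1} M)$, which can be nonzero since $\wfr/(p^j, \wfr T)$ has projective dimension $2$.

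The paper avoids this as follows. First lift $\wfr \to M$ hitting $x$ (automatic as $\wfr$ is free), then observe that since $\mathrm{Hom}_{\mathcal{D}(\wfr)}(\wfr, M) = H^0(M)$ is discrete for $M$ connective, the relation $Tx = 0$ already kills the composite $\wfr \xrightarrow{\cdot T} \wfr \to M$ in the derived $\infty$-category, giving a genuine map $g: \wfr/\wfr T \to M$. Crucially, one cannot then use the torsion order $j$ of $x$ to factor $g$ through $\wfr/(p^j, \wfr T)$, since knowing $p^j x = 0$ only says that $p^j g$ vanishes on $H^0$. Instead, the paper uses that $\wfr/\wfr T$ is compact and $M = \varinjlim_n M[p^n]$ (from $M[1/p] = 0$), so $g$ factors through some $M[p^n]$, which means $p^n g = 0$ as a map in $\mathcal{D}(\wfr)$ (not merely on cohomology) and hence $g$ factors through $\mathrm{cofib}(p^n) = \wfr/(p^n, \wfr T)$. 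Note the exponent $n$ comes from compactness and may differ from the torsion order of $x$. If you replace your factoring step with this compactness argument, the rest of your proof goes through; the left-completeness / unbounded identification concerns in your final paragraph are handled by the remarks preceding the proposition in the paper (the restricted $t$-structure on $\mathcal{D}_{\mathrm{alg}}(\wfr)$ is already observed to be left and right complete) together with \cite[Rem.~C.5.4.11]{SAG}.
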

\begin{proof} 
The objects of 
\Cref{examplesofalgebraic} are evidently compact, since the sequence $(p^j, T)
\in \wfr$ is regular. 
Let $M \in \mathcal{D}(\wfr)^{\leq 0}$ be algebraic. We claim that 
for any $x \in H^0(M)$, there exists a monic polynomial $T = F^n + a_1 F^{n-1} +
\dots + a_n \in \wfr$ and a map $\wfr/(p^n, T) \to M$ carrying the unit to $x$. 
In fact, since $x$ is annihilated by some $T$, we obtain 
a map $\wfr/T \to M$ in $\mathcal{D}(\wfr)$; this map is annihilated by some
power of $p$ by compactness of $\wfr/T \in \mathcal{D}( \wfr)^{\leq 0}$, and thus factors over 
$\wfr/(p^n, T)$.
This proves the claim, whence the result follows in light of
\cite[Cor.~C.2.1.7]{SAG} and \cite[Rem.~C.5.4.11]{SAG}. 
\end{proof} 

\begin{proposition} 
\label{extofscalars}
Let $R \to R'$ be any map of perfect $\mathbb{F}_p$-algebras. The extension of
scalars functor
$W(R')[F^{\pm 1}] \otimes^{\mathbb{L}}_{W(R)[F^{\pm 1}]} (-): \mathcal{D}(
W(R)[F^{\pm 1}]) \to 
\mathcal{D}(W(R')[F^{\pm 1}])$ carries algebraic objects to algebraic objects. 
\end{proposition}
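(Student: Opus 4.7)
The plan is to reduce to the compact generators provided by \Cref{algebraicgenerators} and then compute the extension of scalars on these explicitly. The ring map $\wfr \to W(R')[F^{\pm 1}]$ sending $F \mapsto F$ is well-defined since $W(R) \to W(R')$ commutes with the Witt vector Frobenius, so the functor $W(R')[F^{\pm 1}] \otimes^{\mathbb{L}}_{\wfr}(-)$ is a left adjoint and preserves all small colimits. The preimage under such a cocontinuous functor of the localizing subcategory $\mathcal{D}_{\mathrm{alg}}(W(R')[F^{\pm 1}]) \subset \mathcal{D}(W(R')[F^{\pm 1}])$ is again a localizing subcategory of $\mathcal{D}(\wfr)$. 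By \Cref{algebraicgenerators}, it thus suffices to verify that for every monic $T = F^n + a_1 F^{n-1} + \cdots + a_n \in \wfr$ and every $j \geq 0$, the extension $W(R')[F^{\pm 1}] \otimes^{\mathbb{L}}_{\wfr} \wfr/(p^j, \wfr T)$ lies in $\mathcal{D}_{\mathrm{alg}}(W(R')[F^{\pm 1}])$.

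I would compute this iteratively. Since $T$ is monic, right multiplication by $T$ is injective on $\wfr$, with discrete cofiber $\wfr/\wfr T$; indeed this quotient is free as a left $W(R)$-module on $\{1, F, \ldots, F^{n-1}\}$, and is in particular $p$-torsion free. The image $T' \in W(R')[F^{\pm 1}]$ of $T$ is again monic, so the same argument shows that
\[ W(R')[F^{\pm 1}] \otimes^{\mathbb{L}}_{\wfr} \wfr/\wfr T \simeq W(R')[F^{\pm 1}]/W(R')[F^{\pm 1}] T' \]
as discrete modules, and the right-hand side is $p$-torsion free (being free as a left $W(R')$-module on $\{1, F, \ldots, F^{n-1}\}$). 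Next, $\wfr/(p^j, \wfr T)$ is the cofiber of multiplication by $p^j$ on $\wfr/\wfr T$, which is an injection by $p$-torsion freeness. Applying the tensor product and using the $p$-torsion freeness just established for the target, we obtain
\[ W(R')[F^{\pm 1}] \otimes^{\mathbb{L}}_{\wfr} \wfr/(p^j, \wfr T) \simeq W(R')[F^{\pm 1}]/(p^j, W(R')[F^{\pm 1}] T'), \]
again as a discrete module. This is algebraic by \Cref{examplesofalgebraic} applied over $R'$, which completes the proof.

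There is no serious obstacle here: the cocontinuity of extension of scalars reduces the whole question to the explicit generators of \Cref{examplesofalgebraic}, and on these the only thing to check is that monicity of $T$ passes to $T'$ and forces injectivity of the relevant right multiplications, so that the derived tensor product collapses onto the naive quotient.
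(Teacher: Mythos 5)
Your reduction to the generators of \Cref{algebraicgenerators} via cocontinuity is exactly the paper's approach; the paper's own proof is a one-line appeal to that reduction, asserting without computation that the generators are preserved, so your explicit verification is a reasonable expansion of the argument.

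There is, however, an error in the justification. You claim that $\wfr/\wfr T$ is free as a left $W(R)$-module on $\{1, F, \ldots, F^{n-1}\}$. This is true over the \emph{ordinary} twisted polynomial ring $W(R)[F]$, where the division algorithm applies, but fails over the Laurent ring $\wfr = W(R)[F^{\pm 1}]$: for instance $T = F$ is monic of degree one, yet $\wfr/\wfr F = 0$, and more generally negative powers of $F$ cannot be reduced via right multiplication by $T$ when the constant term $a_n$ is a nonunit. The same issue affects your parallel claim over $W(R')[F^{\pm 1}]$. What your argument actually requires — that right multiplication by $T$ is injective on $\wfr$ and that $\wfr/\wfr T$ is $p$-torsion free, and likewise for $T'$ over $R'$ — is still true, but needs a different justification. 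One clean route: $\{F^i\}_{i\geq 0}$ is an Ore set in $W(R)[F]$, so $\wfr$ is a flat right $W(R)[F]$-module and $\wfr/\wfr T \cong \wfr \otimes_{W(R)[F]} W(R)[F]/W(R)[F]T$ inherits $p$-torsion freeness from the genuinely finite free module $W(R)[F]/W(R)[F]T$. Alternatively, a top-degree-coefficient argument shows right multiplication by the image of $T$ is injective on $R[F^{\pm 1}] = \wfr/p$, which simultaneously gives injectivity of $\cdot T$ on $\wfr$ and $p$-saturation of the ideal $\wfr T$. With this correction in place, your computation of the derived tensor product on the generators is valid and the proposition follows.
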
 
\begin{proof} 
This follows from 
\Cref{algebraicgenerators}, since the generators of \Cref{algebraicgenerators}
are preserved under extension of scalars and extension of scalars is
cocontinuous. 
\end{proof} 

We next observe that restriction of scalars along a closed embedding is fully
faithful on $p$-power torsion objects of $\mathcal{D}(\wfr)$; this is an analog
of Kashiwara's theorem, cf.~\cite[Th.~5.3.1]{BLRH}. 

\begin{proposition} 
Let $R \to R'$ be any surjection of perfect $\mathbb{F}_p$-algebras. The
restriction of scalars functor $\mathcal{D}(W(R')[F^{\pm 1}])_{\ptors} \to
\mathcal{D}(W(R)[F^{\pm 1}])_{\ptors}$ is fully faithful.  The
restriction of scalars functor carries algebraic objects
into algebraic objects. 
\end{proposition}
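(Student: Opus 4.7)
Set $A = \wfr$ and $B = W(R')[F^{\pm 1}]$. The surjection $R \twoheadrightarrow R'$ induces surjections $W_n(R) \twoheadrightarrow W_n(R')$ for each $n$, hence $W(R) \twoheadrightarrow W(R')$ and $A \twoheadrightarrow B$ are surjective. We treat the two assertions in turn.

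\emph{Preservation of algebraicity.} Suppose $M$ is algebraic over $B$ and $x \in M$ satisfies $F^n x + a_1 F^{n-1} x + \dots + a_n x = 0$ with $a_i \in W(R')$. Lifting each $a_i$ to some $\tilde a_i \in W(R)$, the same relation with $\tilde a_i$ in place of $a_i$ holds in $M$ viewed as an $A$-module, because the $A$-action on $M$ factors through $B$. Passing to cohomology, restriction preserves algebraicity.

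\emph{Fully faithfulness.} The left adjoint to restriction is $B \otimes_A^{\mathbb{L}} (-)$, so fully faithfulness amounts to the counit $c_M: B \otimes_A^{\mathbb{L}} M \to M$ being an equivalence for all $M \in \mathcal{D}(B)_{\ptors}$. The compact object $B/p$ generates $\mathcal{D}(B)_{\ptors}$ as a localizing subcategory: its right orthogonal consists of objects on which $p$ acts invertibly, and such an object vanishes inside $\mathcal{D}(B)_{\ptors}$. It therefore suffices to verify $c_{B/p}$ is an equivalence. Writing $C$ for the fiber of the bimodule multiplication $\mu : B \otimes_A^{\mathbb{L}} B \to B$, the fiber of $c_{B/p}$ identifies with $C/p$, so the claim is $C/p \simeq 0$. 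Using the $p$-torsion-freeness of $A$ and $B$, together with the identifications $A/p = R[F^{\pm 1}]$ and $B/p = R'[F^{\pm 1}]$, this in turn reduces to proving that
\[ R'[F^{\pm 1}] \otimes_{R[F^{\pm 1}]}^{\mathbb{L}} R'[F^{\pm 1}] \to R'[F^{\pm 1}] \]
is an equivalence. The right $R[F^{\pm 1}]$-module identification $R'[F^{\pm 1}] \cong R' \otimes_R R[F^{\pm 1}]$ (coming from the left-$R$-basis $\{F^i\}_{i \in \mathbb{Z}}$) lets us rewrite the left-hand side as $\bigoplus_i (R' \otimes_R^{\mathbb{L}} R') F^i$, so the entire claim reduces to the derived idempotence $R' \otimes_R^{\mathbb{L}} R' \simeq R'$.

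\emph{Main obstacle.} This derived idempotence is the crux of the argument. The kernel $I = \ker(R \to R')$ is a perfect ideal: for any $r \in I$, the unique $p$-th root $r^{1/p} \in R$ maps to an element of $R'$ with $p$-th power zero, which vanishes by reducedness of $R'$, so $r^{1/p} \in I$; hence $I = I^p \subseteq I^2 \subseteq I$, forcing $I = I^2$. The derived strengthening $I \otimes_R^{\mathbb{L}} R/I \simeq 0$ (equivalent to $R' \otimes_R^{\mathbb{L}} R' \simeq R'$) is a standard consequence of the structure theory of perfect $\mathbb{F}_p$-algebras, cf.~\cite{BSWitt}, and completes the proof.
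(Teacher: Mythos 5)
Your proof is correct and follows essentially the same route as the paper: both reduce the full faithfulness of restriction to showing the counit $B \otimes_A^{\mathbb{L}} M \to M$ is an equivalence, pass to a mod-$p$ generator, untwist the Frobenius to forget the $F$-structure, and invoke the derived idempotence $R' \otimes_R^{\mathbb{L}} R' \simeq R'$ from \cite[Lem.~11.10]{BSWitt}. The paper phrases the reduction via colimits and Postnikov towers (to arbitrary discrete $M'$ killed by $p$) and then directly observes the underlying $R'$-module map is $R' \otimes_R^{\mathbb{L}} M' \to M'$, whereas you single out the compact generator $B/p$ and track the untwisting $R'[F^{\pm 1}] \cong R' \otimes_R R[F^{\pm 1}]$ more explicitly; these are the same idea with slightly different bookkeeping. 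Your $I = I^2$ computation is correct but, as you recognize, does not by itself imply the derived statement, which you correctly attribute to \cite{BSWitt}.
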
 
\begin{proof} 
The last assertion is evident, so it suffices to show that 
if $M' \in \mathcal{D}(W(R')[F^{\pm 1}])_{\ptors} $, the adjunction map 
$W(R')[F^{\pm 1}] \otimes^{\mathbb{L}}_{W(R)[F^{\pm 1}]} M' \to M'$ is an equivalence. This
reduces by taking colimits and Postnikov towers to the case where $M'$ is
discrete and annihilated by $p$. As objects of $\mathcal{D}(R')$, the adjunction
map is $R' \otimes^{\mathbb{L}}_R M' \to M'$, which is an equivalence since $R'
\otimes^{\mathbb{L}}_R R' \to R'$ is an equivalence,
cf.~\cite[Lem.~11.10]{BSWitt}.  
\end{proof}

\begin{proposition} 
Let $C$ be a perfect $R$-algebra. 
Suppose $C$ is integral over $R$. Consider $C$ as a left $\wfr$-module with $F$
acting by Frobenius; then $C$ is algebraic. 
\label{integralisalgebraic}
\end{proposition}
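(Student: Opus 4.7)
The plan is to verify the element-wise criterion in the definition of algebraic directly: for each $x \in C$, I will produce an integer $n \geq 0$ and elements $a_1, \dots, a_n \in W(R)$ satisfying $F^n x + a_1 F^{n-1} x + \dots + a_n x = 0$. The condition $C[1/p] = 0$ is automatic since $C$ is an $\mathbb{F}_p$-algebra. Moreover, because $p$ annihilates $C$, the $W(R)$-module structure on $C$ factors through the projection $W(R) \twoheadrightarrow R$, so $F^i x = x^{p^i}$ and it suffices to produce $a_i \in R$ with $x^{p^n} + a_1 x^{p^{n-1}} + \dots + a_n x = 0$.

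To produce such a relation, I would fix an integral equation $x^m + b_1 x^{m-1} + \dots + b_m = 0$ with $b_j \in R$ and set $R_0 = \mathbb{F}_p[b_1, \dots, b_m] \subset R$. The key point of this reduction is that $R_0$ is noetherian by the Hilbert basis theorem, whereas $R$ itself need not be. The subring $R_0[x] \subset C$ is generated as an $R_0$-module by $1, x, \dots, x^{m-1}$ (an immediate induction using the monic relation), hence $R_0[x]$ is a noetherian $R_0$-module. Now consider the ascending chain of $R_0$-submodules $V_n := \sum_{i=0}^{n-1} R_0 \cdot x^{p^i}$ inside $R_0[x]$; by noetherianness it must stabilize, producing $n$ and coefficients $c_0, \dots, c_{n-1} \in R_0 \subseteq R$ with $x^{p^n} = \sum_{i=0}^{n-1} c_i x^{p^i}$, i.e., $F^n x - c_{n-1} F^{n-1} x - \dots - c_0 x = 0$.

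There is no substantive obstacle; the only point requiring care is that $R$ itself is not assumed noetherian, so one cannot directly conclude that an ascending chain of $R$-submodules of a finitely generated $R$-module stabilizes. The standard noetherian approximation via $R_0$ circumvents this, and after that the result is an immediate consequence of the definition of integrality.
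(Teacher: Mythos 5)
Your proof is correct. It takes essentially the same underlying idea as the paper (noetherian approximation), but executes it more directly: you work one element $x$ at a time, using the coefficients of its integral equation to cut out a finitely generated (hence noetherian) subring $R_0 \subset R$ over which $R_0[x]$ is finite. The paper instead first reduces to the case where $C$ is the perfection of a finite, finitely presented $R$-algebra, and then descends the whole pair $(R, C)$ to a finitely generated base; your element-wise version avoids both the filtered-colimit reduction and the descent of $C$, and is arguably cleaner for this particular statement. The one small thing to keep in mind (which you do note) is that the definition of ``algebraic'' asks for coefficients $a_i \in W(R)$, but since $p$ kills $C$ the $W(R)$-action factors through $R$, so coefficients in $R_0 \subset R$ lifted arbitrarily to $W(R)$ suffice.
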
 
\begin{proof} 
Fix an element $x \in C$; we need to show that it is annihilated by a monic
polynomial in $F$ in $R[F^{\pm 1}]$. 
Since $C$ is a filtered colimit of perfections of finite, finitely presented
$R$-algebras, we may reduce to the case when $C$ is of this form. 
Now, we may descend $C$ as follows: there exists a finitely generated subalgebra
$R_0 \subset R$, a finite $R_0$-algebra $C_0$, and an isomorphism $C = (C_0
\otimes_{R_0} R)_{\mathrm{perf}}$. Moreover, we can arrange that $x$ descends to an
element $x_0 \in C_0$. Since $C_0$ is a finite $R_0$-module, the $R_0$-submodule
of $C_0$
generated by the Frobenius iterates of $x_0$
is finitely generated, whence we obtain the algebraicity claim. See
also \cite[Prop.~4.2.1]{BLRH} for a more general result: any holonomic $R[F^{\pm 1}]$-module is algebraic. 
\end{proof}

In the remainder of this section, we observe that the category of algebraic $\wfr$-modules is locally regular
coherent in the sense of \Cref{locregcoh}. This will not be needed for the proof of
\Cref{mainthm} (and in fact also follows from \Cref{mainthm} together with
\Cref{compactobjofderivedcat}), but we include an independent argument (relying
on the results on holonomic $R[F^{\pm 1}]$-modules from 
\cite{BLRH}) for completeness.

An $R[F^{\pm 1}]$-module is defined to be \emph{holonomic} in
\cite[Def.~4.1.1]{BLRH} if 
it is obtained via extension of scalars from an $R[F]$-module which is finitely
presented as an $R$-module. 
The subcategory of holonomic modules
is an abelian  subcategory of the category of $R[F^{\pm 1}]$-modules and is
stable under extensions, \cite[Cor.~4.3.3]{BLRH}. 
The category of algebraic $R[F^{\pm 1}]$-modules 
(an $R[F^{\pm 1}]$-module is algebraic if it is algebraic as a $\wfr$-module)
is
compactly generated, with the holonomic modules as the subcategory of compact
objects \cite[Th.~4.2.9]{BLRH} (where we use also that the subcategory of
holonomic modules is idempotent-complete since it is abelian). We extend the definition to $\wfr$-modules as follows. 

\begin{definition} 
A (discrete) $\wfr$-module $M$ is said to be \emph{holonomic} if it is
algebraic and if it is compact in
the category of 
algebraic $\wfr$-modules. 
\end{definition}

\begin{remark} 
\label{holonomicgenerators}
Since 
$\mathcal{D}_{\mathrm{alg}}(\wfr)^{\leq 0}$ is compactly generated by
\Cref{algebraicgenerators}, it follows
that the category of algebraic $\wfr$-modules (the heart) is compactly
generated, with compact objects the cokernels of maps between finite direct
sums of the objects of \Cref{examplesofalgebraic}.  
It also follows that an algebraic $\wfr$-module is holonomic if and only if it
is finitely presented as a module over $\wfr$. 
\end{remark}

\begin{proposition} 
If $R \to R'$ is any map of perfect $\mathbb{F}_p$-algebras, the extension of
scalars functor $W(R')[F^{\pm 1}] \otimes^{\mathbb{L}}_{\wfr} (-):
\mathcal{D}_{\mathrm{alg}}(\wfr) \to \mathcal{D}_{\mathrm{alg}}(W(R')[F^{\pm
1}])$ is $t$-exact. 
\label{texactbasechange}
\end{proposition}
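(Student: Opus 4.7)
Right $t$-exactness of the functor $F := W(R')[F^{\pm 1}] \otimes^{\mathbb{L}}_{\wfr}(-)$ is automatic: $F$ is left adjoint to the (exact) restriction-of-scalars functor, so it preserves connective objects, and by \Cref{extofscalars} it lands in the algebraic subcategory.

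For left $t$-exactness, the plan is to first verify the claim on the compact generators $M_{j,T} := \wfr/(p^j, \wfr T)$ from \Cref{examplesofalgebraic} via an explicit Koszul resolution. Note that $p^j$ is central in $\wfr$, and right multiplication by $T$ is a left $\wfr$-linear endomorphism of $\wfr$ which is injective because $T$ is monic in $F$ (a top-degree coefficient argument: if $xT=0$ with $x=\sum F^i b_i$ then the leading coefficient forces $b_{i_{\max}}=0$). Combined with the $p$-torsion-freeness of $\wfr$ and the resulting $p$-saturation of $\wfr T$, we obtain an exact Koszul resolution
\[
0 \to \wfr \xrightarrow{\begin{pmatrix} T \\ -p^j \end{pmatrix}} \wfr^{\oplus 2} \xrightarrow{(p^j,\, T)} \wfr \to M_{j,T} \to 0
\]
of left $\wfr$-modules (all maps given by right-multiplication). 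Applying $F$ yields the analogous Koszul complex over $W(R')[F^{\pm 1}]$ with $T$ replaced by its image $T'$, which remains monic in $F$; the same analysis shows it is still a resolution, so $F(M_{j,T}) = W(R')[F^{\pm 1}]/(p^j, W(R')[F^{\pm 1}] T')$ is discrete.

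To extend from the generators to an arbitrary discrete algebraic $M$, I would use that by \Cref{algebraicgenerators}, $\mathcal{D}_{\mathrm{alg}}(\wfr)$ is the derived $\infty$-category of its heart, so the cocontinuous, right-$t$-exact functor $F$ is the left derived functor of its restriction $\bar F = H^0 F$. Hence left $t$-exactness of $F$ is equivalent to exactness of $\bar F$, i.e., to the vanishing of $\mathrm{Tor}^{\wfr}_i(W(R')[F^{\pm 1}], M) = 0$ for $i > 0$ on all algebraic $M$. By \Cref{holonomicgenerators}, the filtered-colimit stability of Tor, and \Cref{algebraicmodulesstability}, one reduces to the case of $M$ holonomic; closure of holonomic modules under subobjects (obtained by combining \cite[Cor.~4.3.3]{BLRH} with the $p$-torsion filtration and \Cref{algebraicmodulesstability}) then allows an iterative resolution of $M$ by direct sums of generators $M_{j,T}$, through which the Koszul vanishing is meant to propagate via the long exact Tor sequence.

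The main obstacle is this last propagation step. Concretely, given a short exact sequence $0 \to K \to P \to M \to 0$ with $P$ a finite direct sum of generators, Step~1 shows $F(P)$ is discrete but the long exact Tor sequence only exhibits $\mathrm{Tor}^{\wfr}_1(W(R')[F^{\pm 1}], M)$ as a subobject of $\bar F(K)$ rather than outright zero. Closing this gap is not purely formal: it requires either iterating the argument carefully using the subobject-closure of holonomic modules, or, more efficiently, invoking the Riemann--Hilbert identification from \cite{BLRH} between holonomic $\wfr$-modules and constructible \'etale sheaves, on which pullback along $\spec(R') \to \spec(R)$ is manifestly exact.
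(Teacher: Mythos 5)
Your observation that right $t$-exactness is automatic (it is a left adjoint to an exact functor) matches the paper, and your Koszul resolution of the generators $\wfr/(p^j, \wfr T)$ is correct: right multiplication by $T$ is injective (leading-coefficient argument), the composite vanishes because $p$ is central, and exactness in the middle uses that $\wfr T$ is $p$-saturated in $\wfr$ (itself a consequence of $W(R)$ being $p$-torsion-free and $T$ being monic). So you have correctly established that the image of each generator under extension of scalars is discrete.

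The gap you flag in the last paragraph is, however, genuine and is exactly where the argument breaks down. Knowing that the generators $M_{j,T}$ have vanishing higher $\mathrm{Tor}$ does not propagate to a general holonomic $M$: with $0 \to K \to P \to M \to 0$ and $P$ a finite sum of generators, the long exact sequence identifies $\mathrm{Tor}_1^{\wfr}(W(R')[F^{\pm 1}], M)$ with the kernel of $\bar F(K) \to \bar F(P)$, and showing this kernel vanishes is precisely the left exactness you are trying to prove. Dimension shifting handles $\mathrm{Tor}_i$ for $i \geq 2$ but never reaches $\mathrm{Tor}_1$, and the ``resolution by $\bar F$-acyclic objects'' paradigm computes $\mathrm{Tor}$ but does not produce vanishing. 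So ``iterating carefully using subobject-closure'' does not close the gap; there is no purely formal route from the generators to the heart.

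The paper proves the proposition by a different (and much shorter) route: reduce by d\'evissage along the $p$-adic filtration to the case $pM = 0$, where $M$ is an algebraic $R[F^{\pm 1}]$-module, and then invoke \cite[Th.~3.5.1]{BLRH}, which directly asserts that extension of scalars of algebraic $R[F^{\pm 1}]$-modules along a map of perfect rings is $t$-exact. That theorem encapsulates the nontrivial flatness input that your Koszul argument cannot manufacture on its own. Your alternative suggestion of invoking the full Riemann--Hilbert identification from \cite{BLRH} would also succeed and is not circular (the paper already relies on \cite{BLRH} throughout this section, and Proposition~\ref{compactoverwfr} is explicitly stated to be independent of the main theorem), but it is a heavier hammer than the targeted citation of \cite[Th.~3.5.1]{BLRH} that the paper uses.
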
 
\begin{proof} 
Since the functor is clearly right $t$-exact, it suffices to show that if $M$ is
a discrete algebraic $\wfr$-module, then $W(R')[F^{\pm 1}]
\otimes^{\mathbb{L}}_{\wfr} M$ is discrete. This follows from
\cite[Th.~3.5.1]{BLRH} (and reduction by d\'evissage to the case $pM =0 $). 
\end{proof} 

\begin{lemma} 
Suppose that $R$ is the perfection of a  finitely generated 
algebra over $\mathbb{F}_p$.
Then: 
\begin{enumerate}
\item  
The category of algebraic $\wfr$-modules is 
locally noetherian (cf.~\cite[Sec.~C.6.8]{SAG} for a treatment) with the
holonomic modules as noetherian objects; the holonomic modules are also precisely the algebraic
$\wfr$-modules which are finitely generated as $\wfr$-modules. 
\item
Any
$\wfr$-module annihilated by a power of $p$ has finite projective dimension. 
\end{enumerate}
\label{lem:noetherian}
\end{lemma}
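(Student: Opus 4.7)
The plan is to reduce both parts to statements about modules over the quotient $R[F^{\pm 1}] = \wfr/p$ via d\'evissage on the $p$-adic filtration, and then invoke structural results from \cite{BLRH} that leverage the finite-type hypothesis on the underlying algebra $A$.

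For the d\'evissage: given a $\wfr$-module $M$ with $p^n M = 0$, the filtration $M \supseteq pM \supseteq \cdots \supseteq p^nM = 0$ has successive subquotients which are $R[F^{\pm 1}]$-modules, and algebraic as such if $M$ is, by \Cref{algebraicmodulesstability}. Noetherianness passes through finite extensions, so (1) reduces to local noetherianness of the category of algebraic $R[F^{\pm 1}]$-modules. For (2), finite projective dimension passes through extensions in a short exact sequence; moreover, any $R[F^{\pm 1}]$-module of finite pd $d$ over $R[F^{\pm 1}]$ has pd $\leq d+1$ over $\wfr$, since $R[F^{\pm 1}] = \wfr/p$ is resolved over $\wfr$ by $\wfr \xrightarrow{p} \wfr$. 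Thus (2) reduces to showing that every $R[F^{\pm 1}]$-module has finite projective dimension as an $R[F^{\pm 1}]$-module.

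For (1), I would invoke \cite[Th.~4.2.9 and Cor.~4.3.3]{BLRH}: algebraic $R[F^{\pm 1}]$-modules form a compactly generated abelian category whose compact objects are the holonomic modules, and the holonomic modules form an abelian subcategory closed under subquotients. Under the finite-type hypothesis on $A$, the holonomic modules are noetherian --- this corresponds, under the covariant Riemann--Hilbert correspondence of \Cref{BLthm}, to the classical noetherianness of the abelian category of constructible $\mathbb{F}_p$-\'etale sheaves on the finite-dimensional qcqs scheme $\spec R$.

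For (2), the aim is to establish a uniform bound $\mathrm{gl.dim}(R[F^{\pm 1}]) \leq C(\dim A)$. My strategy would be to produce explicit finite projective resolutions for the compact generators $\wfr/(p, T)$ of \Cref{examplesofalgebraic}, with length controlled by the \'etale $\mathbb{F}_p$-cohomological dimension of $\spec R$ (which is finite by \cite[Exp.~X, Th.~5.1]{SGA4}, as was already used in the proof of \Cref{compactobjofderivedcat}), and then deduce the bound on arbitrary modules from compact generation. The main obstacle is precisely this last propagation step: the ring $R = A_{\mathrm{perf}}$ is typically non-noetherian and may itself have infinite global dimension (e.g., for $A = \mathbb{F}_p[x]$ the ring $R$ is a valuation ring with idempotent maximal ideal), so the standard skew-Laurent identity $\mathrm{gl.dim}(R[F, F^{-1}; \phi]) = \mathrm{gl.dim}(R) + 1$ is unavailable; the finite bound must instead be extracted directly from the Frobenius-twisted structure of $R[F^{\pm 1}]$ together with the finite-type hypothesis on $A$, rather than from any homological properties of $R$ alone.
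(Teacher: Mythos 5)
Your d\'evissage along the $p$-adic filtration, reducing both parts to the corresponding statements for $R[F^{\pm 1}] = \wfr/p$, is exactly what the paper does, including the change-of-rings observation that bounded $R[F^{\pm 1}]$-projective dimension yields bounded $\wfr$-projective dimension via the length-one resolution $\wfr \xrightarrow{p} \wfr$ of $R[F^{\pm 1}]$. For part (1) you route the base case through the Riemann--Hilbert correspondence (citing \Cref{BLthm}), whereas the paper deliberately avoids this --- the entire point of this part of the section is flagged as giving an ``independent argument'' --- and instead cites \cite[Prop.~4.3.1]{BLRH} directly for local noetherianity of algebraic $R[F^{\pm 1}]$-modules; this is a minor but real divergence in spirit.

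For part (2) there is a genuine gap, and it comes from a false premise. You assert that $R = A_{\mathrm{perf}}$ ``may itself have infinite global dimension,'' offering $A = \mathbb{F}_p[x]$ with the claim that $R$ is then ``a valuation ring with idempotent maximal ideal.'' This is incorrect: $\mathbb{F}_p[x^{1/p^\infty}]$ is a non-local one-dimensional domain, not a valuation ring (you may be thinking of its localization at $(x^{1/p^\infty})$). More importantly, the relevant and true fact --- and precisely what the paper invokes --- is \cite[Prop.~11.31]{BSWitt}: the perfection of a noetherian $\mathbb{F}_p$-algebra of finite Krull dimension has \emph{finite} global dimension. Combined with \cite[Rem.~3.1.8]{BLRH}, which bounds the global dimension of $R[F^{\pm 1}]$ by that of $R$ plus one, the base case follows immediately and there is nothing left to work around. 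Your proposed replacement --- producing finite resolutions for the compact generators $\wfr/(p, T)$ and ``deducing the bound on arbitrary modules from compact generation'' --- would not work even if the obstacle were real: finite projective dimension of a family of compact generators does not propagate to arbitrary objects of the category (consider $\mathbb{Z}/p^2$, whose free modules have projective dimension zero while $\mathbb{Z}/p$ has infinite projective dimension). Finite global dimension of the ring itself is required, and that is what the BSWitt citation provides.
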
 
\begin{proof} 

To prove local noetherianity, we show that if 
$M$ is an algebraic $\wfr$-module such that 
$M$ is finitely generated as a module over $\wfr$, then any $\wfr$-submodule $M'
\subset M$ is also finitely generated. 
In fact, $M/pM$ is a finitely generated algebraic $R[F^{\pm 1}]$-module, and the
category of algebraic $R[F^{\pm 1}]$-modules is 
locally noetherian
\cite[Prop.~4.3.1 and 3.4.3]{BLRH}, with the holonomic (or in this case finitely generated
algebraic) modules as noetherian objects. 
Intersecting the (finite) $p$-adic filtration on $M$ with $M'$, we see that $M'$ admits a
finite filtration by finitely generated $R[F^{\pm 1}]$-modules, and hence is
finitely generated over $\wfr$. 
It follows that the category of finitely generated algebraic $\wfr$-modules is
an abelian category, and is precisely the subcategory of holonomic $\wfr$-modules. 
This gives the local noetherianity and proves (1). 

Finally, we claim that 
any $\wfr$-module $N$
which is annihilated by a power of $p$ has finite projective dimension as a $\wfr$-module. It suffices to show any
 $R[F^{\pm 1}]$-module has finite projective
dimension as an $R[F^{\pm 1}]$-module; this follows from \cite[Rem.~3.1.8
and 3.2.8]{BLRH}
together with \cite[Prop.~11.31]{BSWitt}. 
\end{proof}

\begin{proposition} 
\label{compactoverwfr}
For any perfect ring $R$, the category of algebraic $\wfr$-modules is locally
regular coherent. 
Moreover, any holonomic $\wfr$-module $M$ has finite 
projective dimension over $\wfr$. 
Thus, the subcategory of compact objects in $\mathcal{D}_{\mathrm{alg}}(\wfr)$
is precisely the subcategory of $t$-bounded objects whose cohomology
$\wfr$-modules are
holonomic. 
\end{proposition}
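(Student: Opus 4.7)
The plan is to apply \Cref{locregcritabelian} to $\mathcal{A}$, the abelian category of algebraic $\wfr$-modules, taking $\mathcal{A}_0 \subset \mathcal{A}$ to be the holonomic modules and $\mathcal{A}_0' \subset \mathcal{A}_0$ the full subcategory of finite direct sums of the generators $\wfr/(p^j, \wfr T)$ of \Cref{examplesofalgebraic}. Compact generation of $\mathcal{A}$ by $\mathcal{A}_0$, and the density of $\mathcal{A}_0'$, are supplied by \Cref{algebraicgenerators} and \Cref{holonomicgenerators}. For the uniform bound in \Cref{locregcoh}(4), I write down a length-two free resolution of each $\wfr/(p^j, \wfr T)$ directly: since $T$ is monic and $p$ is central in $\wfr$, the element $T$ acts as a non-zero-divisor on $\wfr$, and $p$ as a non-zero-divisor on $\wfr/\wfr T$, yielding $\wfr$-projective dimension at most $2$ and hence the desired uniform Ext-vanishing.

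The two remaining conditions, namely closure of $\mathcal{A}_0$ under kernels, cokernels, and extensions, and the finiteness of the $\wfr$-projective dimension of each holonomic module (the latter simultaneously giving the second clause of the proposition), both flow from a descent argument. Any holonomic $\wfr$-module $M$ is a cokernel of a map between finite sums of objects of \Cref{examplesofalgebraic}, so it involves only finitely many elements of $R$ and is annihilated by some $p^N$; hence $M \iso \wfr \otimes_{W(R_0)[F^{\pm 1}]} M_0$ for some holonomic $W(R_0)[F^{\pm 1}]$-module $M_0$, with $R_0 \subset R$ the perfection of a finitely generated $\mathbb{F}_p$-subalgebra. A map between holonomic $\wfr$-modules likewise descends to some such $R_0$. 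The main obstacle is then to verify that base change behaves well enough: by \Cref{texactbasechange} the functor $\wfr \otimes_{W(R_0)[F^{\pm 1}]} (-)$ is exact on algebraic $W(R_0)[F^{\pm 1}]$-modules, and by \Cref{extofscalars} it preserves the generators of \Cref{examplesofalgebraic}. Consequently it sends finite projective resolutions (which exist for $M_0$ by \Cref{lem:noetherian}(2)) to finite projective $\wfr$-resolutions of $M$, and it transports kernels and cokernels (holonomic downstairs by the local noetherianity of \Cref{lem:noetherian}(1)) to holonomic $\wfr$-modules upstairs.

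With all four conditions of \Cref{locregcoh} verified, \Cref{locregcritabelian} identifies the compact objects of $\mathcal{D}_{\mathrm{alg}}(\wfr)$ with the $t$-bounded objects whose cohomology lies in $\mathcal{A}_0$, completing the proof.
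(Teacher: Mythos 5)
Your proof is correct and follows essentially the same route as the paper's: apply \Cref{locregcritabelian} with $\mathcal{A}_0$ the holonomic modules and $\mathcal{A}_0'$ the finite sums of the generators from \Cref{examplesofalgebraic}, verify the uniform bound via the projective dimension $\leq 2$ of those generators, and establish finiteness of projective dimension and closure of $\mathcal{A}_0$ under kernels by descending to a perfectly finitely generated subring $R_0 \subset R$ and invoking \Cref{lem:noetherian} together with the $t$-exactness of base change (\Cref{texactbasechange}). The only substantive addition is that you spell out the two-step Koszul-type resolution of $\wfr/(p^j, \wfr T)$ (right multiplication by monic $T$ is injective on $\wfr$, and $p$ is a non-zero-divisor on $\wfr/\wfr T$ since $W(R)$ is $p$-torsion-free for $R$ perfect), where the paper merely asserts the bound; that verification is correct.
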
 
\begin{proof} 
Given a holonomic $\wfr$-module
$M$, there exists a perfectly finitely generated  subring $R_0 \subset R$, a
holonomic $W(R_0)[F^{\pm 1}]$-module $M_0$, and an isomorphism $M \simeq W(R)[F^{\pm 1}]
\otimes_{W(R_0)[F^{\pm 1}]} M_0$, thanks to \Cref{holonomicgenerators}. By \Cref{texactbasechange}, the isomorphism also holds with the derived
tensor product. It follows 
from \Cref{lem:noetherian}
that $M$ has finite projective dimension as a
$\wfr$-module.

Similarly, given any map $f: M \to N$ between holonomic $\wfr$-modules, we can
descend the map to some $R_0'$ and deduce (using \Cref{lem:noetherian} and
$t$-exactness of base-change again) that the kernel of $f$ is holonomic. 

The
objects of \Cref{algebraicgenerators}, which generate the category of algebraic
$\wfr$-modules, have projective dimension $\leq 2$ as $\wfr$-modules.

Combining all these assertions, we conclude that algebraic $\wfr$-modules form a
locally regular coherent Grothendieck abelian category, and the last claim
	follows from \Cref{locregcritabelian}. 
\end{proof}

\section{$\wfr$-modules and sheaves}
Let $R$ be a perfect $\mathbb{F}_p$-algebra. 
The purpose of this section is to embed $\mathcal{D}(\wfr)_{\ptors}$ as a full subcategory of
$\mathcal{D}( \schp_R)$; this embedding is a consequence of the results of
\cite{Breen}. 

Let us first recall some aspects of Morita theory. 
Let $\mathcal{C}$ be a presentable, stable $\infty$-category. 
Given an object $X \in \mathcal{C}$, we have an 
associative ring spectrum $\mathrm{End}_{\mathcal{C}}(X)$ and an 
adjunction
\[ ( X
\otimes_{\mathrm{End}_{\mathcal{C}}(X)}
(-), \mathrm{Hom}_{\mathcal{C}}(X,
-)): \mathrm{RMod}(\mathrm{End}_{\mathcal{C}}(X)) \rightleftarrows \mathcal{C}
.\]
This can be constructed as follows. 
The $\mathrm{Ind}$-completion of the thick subcategory $\mathcal{C}_0$
of $\mathcal{C}$ generated by $X$ is equivalent to 
$\mathrm{RMod}(\mathrm{End}_{\mathcal{C}}(X))$ by the Schwede--Shipley theorem
\cite[Th.~7.1.2.1]{HA}; now the inclusion $\mathcal{C}_0 \subset \mathcal{C}$ extends to a
cocontinuous functor $\mathrm{Ind}(\mathcal{C}_0) \to \mathcal{C}$ and gives the
desired adjunction. Compare also \cite[Ex.~C.1.5.11]{SAG} for an account of a  very similar
construction.

\begin{construction}[From Frobenius modules to sheaves] 
\label{Frobtosheaf}
We define a cocontinuous functor
\[ W \otimes^{\mathbb{L}}_{\wfr} (-):  \mathcal{D}(\wfr)  \to \mathcal{D}(
\schpp_R) \]
as follows. 
The Witt vector functor  defines an object $W$ of $\mathcal{D}(\schp_R)$ with a
right action of $\wfr$ (where $F$ acts as the \emph{inverse} of the Witt vector
Frobenius, cf.~\Cref{relativetensor}), whence Morita theory
produces a cocontinuous functor $\mathcal{D}(W(R)[F^{\pm 1}]) \to
\mathcal{D}(\schpp_R)$ whose the right adjoint is given by
$\mathrm{RHom}_{\mathcal{D}(\schpp_R)}( W, -)$.   
\end{construction} 

\begin{remark}  \label{presheafvssheaf}
A priori, the tensor product of \Cref{Frobtosheaf} involves \'etale sheafification, but in
fact the construction can be carried out purely at the \emph{presheaf} level for
$p$-power torsion objects:
that is,
for any $M \in \mathcal{D}(\wfr)_{\ptors}$,
the construction carrying a  perfect $R$-algebra $R'$ to $W(R')
\otimes^{\mathbb{L}}_{\wfr} M \in \mathcal{D}(\mathbb{Z})$  is already a
hypercomplete \'etale sheaf.

In fact, let $R'^{-1}$ be a perfect $R$-algebra, and let $R'^{\bullet}$ be an \'etale
hypercover of $R'^{-1}$. We need to show that the map 
$ W(R'^{-1}) \otimes^{\mathbb{L}}_{\wfr} M \to \varprojlim W(R'^\bullet)
\otimes^{\mathbb{L}}_{\wfr} M$ is an equivalence. 
Without loss of generality, we may base change and thus assume $R'^{-1} = R$. 

By working up the Postnikov
tower of $M$, we may assume that $M$ is truncated and then (by flatness of each
term in $R'^{\bullet}$ over $R$) that
$M$ is discrete, and even annihilated by $p$. 
In this case, $W(R') \otimes^{\mathbb{L}}_{\wfr} M$ is
expressed as the cofiber of the self-map $F-1$ on $R' \otimes^{\mathbb{L}}_R M
$, cf.~\Cref{relativetensor}. 
Since 
$R' \mapsto R' \otimes^{\mathbb{L}}_R M $ satisfies flat hyperdescent
\cite[Cor.~D.6.3.3]{SAG}, we conclude. 
\end{remark}

\begin{theorem} 
\label{Breenthm}
The functor 
$W \otimes_{W(R)[F^{\pm 1}]}^{\mathbb{L}}(-): \mathcal{D}(\wfr)_{\ptors} \to
\mathcal{D}(\schp_R)$ of \Cref{Frobtosheaf} is fully faithful, with right
adjoint (and left inverse)  
$\rhom_{\mathcal{D}(\schp_R)}(W, -)$.
The essential image is a localizing subcategory.\footnote{In the next section, we
will modify this by a shift, but this does not affect the conclusions.} 
\end{theorem}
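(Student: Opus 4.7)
The adjunction with right adjoint $\mathrm{RHom}_{\mathcal{D}(\schp_R)}(W,-)$ is built into the Morita-theoretic construction of \Cref{Frobtosheaf}. Once fully faithfulness is proven, the essential image is automatically closed under colimits, since a fully faithful cocontinuous embedding between presentable $\infty$-categories has image stable under the relevant colimits. Thus the task reduces to showing that the unit
\[ \eta_M: M \to \mathrm{RHom}_{\mathcal{D}(\schp_R)}(W, W \otimes^{\mathbb{L}}_{\wfr} M) \]
is an equivalence for every $M \in \mathcal{D}(\wfr)_{\ptors}$.

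The plan is to reduce to the mod-$p$ case and then invoke Breen's theorem. First I would verify that both sides of $\eta_M$ preserve filtered colimits and cofiber sequences in $M$. The left-hand side is cocontinuous by construction. For the right-hand side, I would use two ingredients: the presheaf-level description of $W \otimes^{\mathbb{L}}_{\wfr} M$ for $p$-power torsion $M$ from \Cref{presheafvssheaf}, and the observation that on $p^k$-torsion targets $\mathrm{RHom}_{\mathcal{D}(\schp_R)}(W,-)$ factors through $W/p^k$, which is compact in $\mathcal{D}(\schp_R)_{\ptors}$ (it is a finite extension of copies of the mod-$p$ reduction $W/p \simeq \mathbb{G}_a$, and $\mathbb{G}_a$ is compact in $\mathcal{D}(\schp_R)_{\ptors}$ by \Cref{derivedTptors} since it is essentially represented by the perfection of $\mathbb{A}^1_R$). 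Combined with left completeness of $\mathcal{D}(\wfr)_{\ptors}$ and devissage along the $p$-adic filtration, this reduces the verification of $\eta_M$ to the mod-$p$ case, where $M$ is a module over $R[F^{\pm 1}] = \wfr/p$.

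In the mod-$p$ case, $\eta_M$ becomes $M \to \mathrm{RHom}_{\mathcal{D}(\schp_R)}(\mathbb{G}_a, \mathbb{G}_a \otimes^{\mathbb{L}}_{R[F^{\pm 1}]} M)$, and the decisive input is Breen's theorem \cite{Breen}: on the perfect site of a perfect $\mathbb{F}_p$-algebra $R$, one has $\mathrm{RHom}_{\mathcal{D}(\schp_R)}(\mathbb{G}_a, \mathbb{G}_a) = R[F^{\pm 1}]$ concentrated in degree $0$, with the degree-$0$ term spanned as an $R$-module by the powers $F^i, i \in \mathbb{Z}$. This pins down $\eta_M$ as an equivalence for $M = R[F^{\pm 1}]$, and the general mod-$p$ case follows by free presentation plus the cocontinuity already verified.

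The main obstacle is Breen's theorem itself, which I treat as a black box; the paper offers alternative proofs via the Steenrod algebra, functor homology, or $v$-descent in the appendix. Beyond this the argument is formal: compatibility of both sides of $\eta_M$ with colimits coming from the presheaf-level description of \Cref{presheafvssheaf}, together with devissage along the $p$-adic filtration.
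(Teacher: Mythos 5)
Your overall plan---reduce to the unit equivalence, then d\'evissage to the mod-$p$ discrete case and invoke Breen---is the right shape, and agrees with the paper's strategy in broad outline. However, there are two interlocking gaps.

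First, your key enabling claim, that $\mathbb{G}_a$ (hence $W/p^k$) is \emph{compact} in $\mathcal{D}(\schp_R)_{\ptors}$, is unjustified and likely false. \Cref{derivedTptors} produces compact generators of the form $\mathbb{Z}/p^n[h_t]$, i.e., \emph{free} $\mathbb{Z}/p^n$-modules on representables; $\mathbb{G}_a$ as a sheaf of abelian groups is not of this form and is not obviously a retract of a finite extension of such objects. What the Breen--Deligne resolution gives is \emph{pseudo-coherence} of $\mathbb{G}_a$ (compactness in each truncation), which is strictly weaker: compactness of $\mathbb{G}_a$ would require a uniform bound on the cohomological dimension of $\mathrm{RHom}(\mathbb{G}_a, -)$ against arbitrary $p$-torsion sheaves, and no such bound is available (it would be an input on the level of Breen's theorem itself, not a formal consequence of the site structure). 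The paper is careful to invoke only pseudo-coherence of $W$ and to compensate by first reducing (via left completeness and \Cref{presheafvssheaf}) to $M'$ truncated and then discrete.

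Second, once compactness is dropped, your final step ``free presentation plus the cocontinuity already verified'' no longer closes. With only pseudo-coherence you get commutation of $\mathrm{RHom}(\mathbb{G}_a,-)$ with filtered colimits on \emph{bounded} objects, so the class of discrete mod-$p$ modules $M$ on which $\eta_M$ is an equivalence is closed under filtered colimits and extensions and contains $R[F^{\pm 1}]$; but this does not exhaust all discrete modules unless $R[F^{\pm 1}]$ has finite global dimension, which fails for a general perfect $R$. This is precisely why the paper inserts the reduction to $R = \mathbb{F}_p$: it passes through the geometric morphism $(f^*, f_*)$ between $\schp_R$ and $\schp_{\mathbb{F}_p}$, identifying $\mathrm{RHom}_{\mathcal{D}(\schp_R)}(W, W\otimes^{\mathbb{L}}_{\wfr}M')$ with a RHom over $\mathbb{F}_p$, and then exploits that $\mathbb{F}_p[F^{\pm 1}]$ has global dimension $1$ to pass from the free module to all discrete modules by a two-term resolution. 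Your argument needs either this reduction to $\mathbb{F}_p$ or a genuinely different devissage; as written, it would break at the step of extending beyond free $R[F^{\pm 1}]$-modules over a general perfect base.

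A minor point: you cite Breen's theorem over a general perfect $R$, which is fine (it is \Cref{Breenthmappendix}), but the body of the paper only uses the version over $\mathbb{F}_p$, exactly because of the reduction above.
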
 
\begin{proof} 
We need to show that if $M, M' \in \mathcal{D}(\wfr)$ and if $M'$ is $p$-power
torsion, then the comparison map
\begin{equation} \label{Rhomcomp}  \mathrm{RHom}_{\wfr}(M, M') \to 
\mathrm{RHom}_{\mathcal{D}( \schp_R)}(W 
\otimes^{\mathbb{L}}_{\wfr} M, W 
\otimes^{\mathbb{L}}_{\wfr} M'
) \end{equation} 
is an equivalence. 
We may assume for the rest of
the proof that $M = \wfr$; indeed, given $M'$, the collection of $M$ such that
the comparison map is an equivalence is a localizing subcategory of $\mathcal{D}(\wfr)$ and
$\mathcal{D}(\wfr)$ is generated by $\wfr$ as a localizing subcategory. 
Thus, we want the map 
\begin{equation} \label{RHomcomp2}  M' \to \mathrm{RHom}_{\mathcal{D}( \schpp_R
)}(W,
W \otimes^{\mathbb{L}}_{\wfr} M')  
\end{equation}
to be an equivalence for any $M' \in \mathcal{D}(\wfr)_{\ptors}$.

Let $(f^*, f_*)$ be the geometric morphism between the perfect site of
$\mathbb{F}_p$ and the perfect site of $R$. 
Then $W \in 
\mathcal{D}( \schpp_R)$ is $f^*W$ by representability, 
and $f_* (W \otimes^{\mathbb{L}}_{\wfr} M')   =W
\otimes^{\mathbb{L}}_{W(\mathbb{F}_p)[F^{\pm
1}]} M'$ (thanks to \Cref{presheafvssheaf})
whence we reduce
showing
\eqref{RHomcomp2} is an equivalence to the absolute case $R = \mathbb{F}_p$,
which we do for the remainder of the proof. 

Using \Cref{presheafvssheaf}, we can further reduce to the case where $M'$ is
truncated by passage up the Postnikov tower of $M'$. 
Since $W$ is pseudo-coherent (i.e., compact in any truncation) in 
$\mathcal{D}( \schpp_{\mathbb{F}_p})$ by the Breen--Deligne resolution
(since $W$ is representable,
cf.~\cite[Lec.~4]{condensed}), we can reduce further to the case where $M'$ is
discrete and $pM' = 0$. 

Now when $M' = \mathbb{F}_p[F^{\pm 1}]$ as well, the equivalence \eqref{RHomcomp2} is
precisely the vanishing result of Breen, \cite{Breen} (also recalled below as
\Cref{Breenthmappendix}), which gives
$\mathrm{RHom}_{\mathcal{D}(\schp_{\mathbb{F}_p}, \mathbb{F}_p)}(\mathbb{G}_a, \mathbb{G}_a)
= \mathbb{F}_p[F^{\pm 1}]$. 
By pseudo-coherence again, the equivalence follows for any free (discrete) 
$\wfr/p = \mathbb{F}_p[F^{\pm 1}]$-module. 
Since $\wfr/p= \mathbb{F}_p[F^{\pm 1}]$ has global dimension $1$, we deduce
\eqref{RHomcomp2} 
for any discrete $\wfr/p$-module, and conclude. 
\end{proof}

\section{The covariant Riemann--Hilbert correspondence}

Let $R$ be a perfect $\mathbb{F}_p$-algebra. 
In this section, we construct the $\rhcov$ functor 
from $\mathcal{D}(\spec(R)_{\mathrm{et}})_{\ptors}$ to $\mathcal{D}(\wfr)$ and
prove \Cref{mainthm} below. 
We start by reviewing a version of the Artin--Schreier sequence. 

\begin{remark}[The Artin--Schreier sequence for animated
$\mathbb{F}_p$-algebras] 
\label{ASanimatedalg}
Let $A$ be any animated $\mathbb{F}_p$-algebra. 
Then there is a natural fiber sequence
\[ R \Gamma_{\mathrm{et}}( \spec( A), \mathbb{F}_p) \to A \xrightarrow{F-1} A   \]
where the first term is also 
$R \Gamma_{\mathrm{et}}( \spec( \pi_0 A), \mathbb{F}_p)$. This follows from the
usual Artin--Schreier sequence for classical $\mathbb{F}_p$-algebras and the
fact that
Frobenius acts by zero on the
higher homotopy groups of an animated $\mathbb{F}_p$-algebra, \cite[Prop.~11.6
and proof]{BSWitt}. 
\end{remark}

\begin{proposition} 
\label{FhasimageinRH}
Let $\mathcal{F} \in \mathcal{D}(\spec(R)_{\mathrm{et}})_{\ptors}$. Then 
there exists a (unique) $M \in \mathcal{D}(\wfr)_{\ptors}$ and an equivalence
$\pi^*
\mathcal{F} \simeq W \otimes^{\mathbb{L}}_{\wfr}M[-1]$. 
Moreover, $M$ is algebraic. 
\end{proposition}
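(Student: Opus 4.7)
My plan is to construct $M$ as the image of $\pi^*\mathcal{F}[1]$ under the right adjoint of the Breen embedding of \Cref{Breenthm}, and reduce the proposition to a convenient generating set where it can be checked by an explicit Artin--Schreier--Witt computation. Concretely, set
$$M := \rhom_{\mathcal{D}(\schp_R)}(W,\, \pi^*\mathcal{F}[1]),$$
equipped with its natural $\wfr$-module structure from the right $\wfr$-action on $W$ of \Cref{Frobtosheaf}. Uniqueness of $M$ is automatic from the full faithfulness in \Cref{Breenthm}. There remain two claims to verify: (a) the counit $W \otimes^{\mathbb{L}}_{\wfr} M \to \pi^*\mathcal{F}[1]$ is an equivalence, i.e., $\pi^*\mathcal{F}[1]$ lies in the essential image of $W \otimes^{\mathbb{L}}_{\wfr}(-)$; and (b) $M$ is algebraic.

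The key structural point is that (a) and (b) are each closed under colimits in $\mathcal{F}$: (a) because $\pi^*$ is cocontinuous and the essential image is closed under colimits by \Cref{Breenthm}; and once (a) holds, the assignment $\mathcal{F} \mapsto M$ is the inverse of an equivalence onto that essential image, hence cocontinuous, while algebraic $\wfr$-modules are closed under colimits by \Cref{algebraicmodulesstability} and \Cref{algebraicgenerators}. It therefore suffices to verify (a) and (b) on generators of $\mathcal{D}(\spec(R)_{\mathrm{et}})_{\ptors}$ as a localizing subcategory. By \Cref{derivedTptors}, such generators are provided by $\mathcal{F} = j_!(\mathbb{Z}/p^n)$, where $j : \spec(R') \to \spec(R)$ is an affine étale map and $n \geq 1$; since $R$ is perfect, each such $R'$ is automatically perfect and lies in $\schp_R$.

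For these generators, I propose the explicit candidate $N := W(R')/p^n$, regarded as a left $\wfr$-module via the natural $W(R)$-action and the (invertible) Witt Frobenius. Two parallel cofiber computations then match. On the $\wfr$-module side, \Cref{relativetensor} applied to $N$, combined with the compatibility of Witt vectors under étale base change of perfect rings (giving $W(A)\otimes^{\mathbb{L}}_{W(R)} W(R')/p^n \simeq W_n(A \otimes_R R')$), yields
\[ (W \otimes^{\mathbb{L}}_{\wfr} N)(\spec(A)) \simeq \mathrm{cofib}\bigl(F-1 : W_n(A \otimes_R R') \to W_n(A \otimes_R R')\bigr). \]
On the sheaf side, the Artin--Schreier--Witt sequence on $\schp_{R'}$, combined with the base-change identity $\pi^* j_! \simeq \tilde{j}_! \pi'^*$ (for $\tilde{j}: \schp_{R'} \to \schp_R$) with $\tilde{j}_!$ exact, and the projection formula, realizes $\pi^*(j_!(\mathbb{Z}/p^n))[1]$ as the same cofiber. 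This yields (a). For (b), the $p$-adic filtration of $N$ has graded pieces isomorphic to $R'$, which is algebraic as an $R[F^{\pm 1}]$-module by \Cref{integralisalgebraic} (since $R'$ is étale, hence integral, over the perfect ring $R$); closure under extensions (\Cref{algebraicmodulesstability}) then gives algebraicity of $N$, and hence of $M$.

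The step I expect to be the main obstacle is the bookkeeping around Frobenius conventions: matching the right $\wfr$-action on $W$, in which $F$ acts via the \emph{inverse} Witt Frobenius (Construction \ref{Frobtosheaf}), with the Witt Frobenius $\phi$ that appears in the Artin--Schreier--Witt sequence, and verifying that the two incarnations of ``$F-1$'' in the two cofiber descriptions genuinely correspond under the étale base change identification $W(A)\otimes_{W(R)} W(R') \simeq W(A \otimes_R R')$.
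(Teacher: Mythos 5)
Your overall structure---reducing to the generators $j_!(\mathbb{Z}/p^n)$ as a localizing subcategory, and computing via Artin--Schreier--Witt---matches the paper, but the explicit candidate $N = W(R')/p^n$ is \emph{not} the correct $\wfr$-module when $j$ is not finite, and the identification of the two cofibers is where the error hides. The functor $\tilde{j}_!$ from abelian sheaves on $\schp_{R'}$ to abelian sheaves on $\schp_R$ is extension by zero (left adjoint to restriction), and by the actual projection formula it sends $W_n(\mathcal{O})$ to $W_n(\mathcal{O}) \otimes_{\mathbb{Z}} \mathbb{Z}[h_{\spec R'}]$, not to the sheaf $A \mapsto W_n(A \otimes_R R')$: the latter is $\tilde{j}_*(W_n(\mathcal{O}))$, and $\tilde{j}_! \neq \tilde{j}_*$ unless $j$ is proper (e.g.\ finite). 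Concretely, if $R = k$ is a perfect field and $R' = k[t, t^{-1}]^{\mathrm{perf}}$ (so $j$ is an affine \'etale map that is not finite), your $N = R'$ is not even algebraic as a $k[F^{\pm 1}]$-module: no relation $F^m t + a_1 F^{m-1} t + \dots + a_m t = 0$ with $a_i \in k$ can hold since the Laurent monomials $t^{p^i}$ are linearly independent. Relatedly, your parenthetical ``\'etale, hence integral'' is false --- open immersions are \'etale but not integral --- so \Cref{integralisalgebraic} does not apply to $R'$. Both halves (a) and (b) thus fail for your $N$.

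The missing ingredient, and exactly what the paper supplies, is a Zariski's main theorem step: factor $j$ as an open immersion into $\spec(B)$ followed by a finite map $g: \spec(B) \to \spec(R)$, so that $j_! \mathbb{F}_p$ is the kernel of $g_*\mathbb{F}_p$ mapping to its restriction to the closed complement; since $\mathcal{F}\mapsto M$ is exact, it suffices to treat $g_*\mathbb{F}_p$ with $g$ finite. For finite $g$ one does have $\tilde{g}_! \simeq \tilde{g}_*$, the Artin--Schreier computation identifies the $\wfr$-module as $B_{\mathrm{perf}}$, and $B_{\mathrm{perf}}$ \emph{is} integral over $R$, so \Cref{integralisalgebraic} does apply. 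With that one reduction inserted before the Artin--Schreier step, your argument coincides with the paper's.
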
 
\begin{proof} 
By \Cref{Breenthm}, the functor $W \otimes^{\mathbb{L}}_{\wfr} (-)[-1]$ is fully
faithful with image a localizing subcategory. 
It suffices to prove the proposition when $\mathcal{F} = j_! \mathbb{F}_p$, for $j: U \to
\spec(R)$ an affine, \'etale map, since these generate 
$\mathcal{D}(\spec(R)_{\mathrm{et}})_{\ptors}$ as a localizing subcategory. 
Using Zariski's main theorem
\cite[Tag 05K0]{stacks-project}, we can factor $j$ as the composite of an open
immersion $U \to Z$ and a finite morphism $g: Z \to \spec(R)$. 
Thus, $j_!(\mathbb{F}_p)$ is the  kernel (or fiber) of the (surjective) map from $g_*(\mathbb{F}_p)$ to
its restriction to a constructible closed subset.
It thus suffices to prove the result for $\mathcal{F}$ the pushforward of the
constant sheaf along any finite map to $\spec(R)$. 

Let $f: \spec B \to \spec R$ be a finite map. 
Then the object $\pi^* (f_* \mathbb{F}_p)$ carries a perfect $R$-algebra $R'$ to the
mod $p$ \'etale cohomology 
of $R' \otimes_R B $, or equivalently $R' \otimes^{\mathbb{L}}_R
B_{\mathrm{perf}}$. 
Thanks to the Artin--Schreier sequence (in the form of \Cref{ASanimatedalg}), $\pi^* ( f_* \mathbb{F}_p)$ 
can be described as the functor which carries
a  perfect $R$-algebra $R'$ to the Frobenius fixed points of $R'
\otimes^{\mathbb{L}}_R
B_{\mathrm{perf}}$. In particular, it corresponds (cf.~\eqref{cofibexpr}) to the 
$\wfr$ module $B_{\mathrm{perf}}$, which is also algebraic
by \Cref{integralisalgebraic}. 
\end{proof} 
\begin{construction}[The functor $\rhcov$] 
\label{consRH}
By 
\Cref{FhasimageinRH}, 
the fully faithful left adjoint functor $\pi^* :  \mathcal{D}( \spec(R)_{\mathrm{et}}
)_{\ptors} \to  
\mathcal{D}( \schp_R)$ has essential image inside the image of the
fully faithful left adjoint functor $W \otimes^{\mathbb{L}}_{\wfr} (-)[-1]$
(cf.~\Cref{Breenthm}), so $\pi^* $
uniquely factors through a fully faithful left adjoint functor
$\rhcov: \mathcal{D}( \spec(R)_{\mathrm{et}})_{\ptors} \to \mathcal{D}( \wfr
)_{\ptors}$,
i.e., we have a commutative diagram 
of fully faithful, left adjoint functors,
\[ \xymatrix{
	& \mathcal{D}(\wfr)_{\ptors} \ar[d]^{W  \otimes^{\mathbb{L}}_{\wfr}
 (-)[-1]}  \\
\mathcal{D}( \spec(R)_{\mathrm{et}})_{\ptors} \ar[r]^{\pi^*} \ar[ru]^{\rhcov} &
\mathcal{D}(\schp_R). 
}.\]
Explicitly, for any $\mathcal{F} \in
\mathcal{D}(\spec(R)_{\mathrm{et}})_{\ptors}$, we
have by \Cref{Breenthm},
\begin{equation} \label{RHformula} \rhcov(\mathcal{F}) =
\mathrm{RHom}_{\mathcal{D}(\schp_R)}(W, \pi^*
\mathcal{F}[1]). \end{equation} 
\label{RHcons}
\end{construction}

\begin{remark}[Comparison with the $\rhcov$ of \cite{BLRH}] 
Note that $\rhcov$ is a left adjoint, and by the above diagram of left adjoints
(with the right vertical arrow fully faithful), we find that $\rhcov$ is
left adjoint to the
functor (referred to as the \emph{solution} functor in \cite{BLRH}) which
carries $M \in \mathcal{D}(\wfr)_{\ptors}$ to 
$\pi_*( W \otimes^{\mathbb{L}}_{\wfr} M) [-1] \in
\mathcal{D}(\spec(R)_{\mathrm{et}})_{\ptors}$. 
 In particular, 
the construction of $\rhcov$ here (in light also of the expression
\eqref{cofibexpr} for the tensor product, and \Cref{presheafvssheaf}) agrees with that of
\cite[Th.~6.1.1]{BLRH}.\end{remark} 

Let us now formulate the main result, i.e., the $p$-power torsion version of the
Riemann--Hilbert correspondence for $\mathbb{F}_p$-schemes,
cf.~\cite[Th.~9.6.1]{BLRH} for the mod $p^n$-version (and on abelian
categories); the present formulation of the result is not difficult to deduce from there. 

\begin{theorem} 
\label{mainthm}
The functor  $\rhcov: \mathcal{D}(\spec(R)_{\mathrm{et}})_{\ptors} \to
\mathcal{D}(\wfr)_{\ptors}$ is fully faithful, $t$-exact, and cocontinuous. The image consists of those $M \in \mathcal{D}(\wfr)$
which are algebraic. \end{theorem}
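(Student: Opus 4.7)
Full faithfulness, cocontinuity, and containment of the image in $\mathcal{D}_{\mathrm{alg}}(\wfr)$ are formal consequences of the construction. In the commutative triangle of \Cref{consRH}, the functors $\pi^*$ (by \Cref{restrictedadjunction}) and $W \otimes^{\mathbb{L}}_{\wfr}(-)[-1]$ (by \Cref{Breenthm}) are both fully faithful and cocontinuous, so the factorization $\rhcov$ inherits both properties. Moreover, \Cref{FhasimageinRH} identifies $\rhcov(j_!(\mathbb{Z}/p^n))$ as an algebraic $\wfr$-module for any affine étale $j : U \to \spec R$; since these objects generate $\mathcal{D}(\spec(R)_{\mathrm{et}})_{\ptors}$ as a localizing subcategory and algebraic modules are closed under colimits (\Cref{algebraicmodulesstability}), cocontinuity of $\rhcov$ forces its essential image to land in algebraic modules.

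The main content is to show that every algebraic $M$ lies in the essential image. By fully faithfulness and cocontinuity of $\rhcov$, together with the compact generation of $\mathcal{D}_{\mathrm{alg}}(\wfr)$ as a localizing subcategory by the objects $M_{j,T} := \wfr/(p^j, \wfr T)$ of \Cref{examplesofalgebraic} (see \Cref{algebraicgenerators}), it suffices to realize each $M_{j,T}$ in the essential image. Via \Cref{Breenthm}, this reduces to showing that $W \otimes^{\mathbb{L}}_{\wfr} M_{j,T}[-1] \in \mathcal{D}(\schp_R)$ lies in the essential image of $\pi^*$. By \Cref{imageofpi} we must check: (a) commutation with filtered colimits of rings, which is immediate from the explicit cofiber description \eqref{cofibexpr}; and (b) henselian rigidity, namely that for any local map $R_1 \to R_2$ between strictly henselian local perfect rings over $R$, the cofibers of $F-1$ on the modules $W(R_i) \otimes^{\mathbb{L}}_{W(R)} M_{j,T}$ agree.

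Verifying (b) is the main obstacle. The plan is to d\'evissage along the $p$-adic filtration and reduce to $j=1$, where $M_{1,T} \simeq R[F^{\pm 1}]/(T)$ is discrete and finite free over $R$. In this case, the cofiber of $F-1$ on $R'[F^{\pm 1}]/(T)$ computes (up to shift) the sheaf of $R'$-points of the finite étale Artin--Schreier-type cover of $\spec R$ attached to the additive polynomial $Tx$. Strictly henselian local rings admit no nontrivial finite étale covers, so for such rings the $R_i$-points of this cover are determined by the (separably closed) residue field; any local map $R_1 \to R_2$ induces an injection of finite point sets of the same cardinality, hence an equality, giving the required rigidity.

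Finally, $t$-exactness follows once the essential image is identified. Right exactness is immediate from cocontinuity combined with the fact, noted in \Cref{FhasimageinRH}, that $\rhcov(j_!(\mathbb{Z}/p^n)) = B_{\mathrm{perf}}$ is discrete on the generators. For left exactness, both $\mathcal{D}(\spec(R)_{\mathrm{et}})_{\ptors}$ (by \Cref{derivedTptors}) and the algebraic subcategory $\mathcal{D}_{\mathrm{alg}}(\wfr)_{\ptors}$ (by \Cref{algebraicgenerators}) are the derived $\infty$-categories of their hearts, so $t$-exactness reduces to checking exactness on hearts, which follows from the essential image identification together with the right $t$-exactness just established.
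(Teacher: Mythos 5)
Your overall skeleton matches the paper's: full faithfulness, cocontinuity, and containment of the image in the algebraic subcategory are formal from the commuting triangle; the converse is reduced to the generators $\wfr/(p^j, \wfr T)$, and the conditions of \Cref{imageofpi} are checked. However, several of the details are either imprecise or genuinely gapped.

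First, the claim that ``$M_{1,T}\simeq R[F^{\pm 1}]/(T)$ is discrete and finite free over $R$'' is false in general. The module $R[F]/R[F]T$ (with $F$, not $F^{\pm 1}$) is finite free of rank $n$, but its localization $R[F^{\pm 1}]/R[F^{\pm 1}]T$ is the unitalization, which need not be finitely generated over $R$ unless $a_n$ is a unit (for instance $T=F-a$ with $a$ a non-unit). Fortunately this is not what you actually use: resolving $M_{1,T}$ by $R[F^{\pm 1}]\xrightarrow{\cdot T}R[F^{\pm 1}]$ and tensoring shows $W(R')\otimes^{\mathbb{L}}_{\wfr}M_{1,T}$ is the cofiber of a single self-map of $R'$, namely the map \eqref{selfmap} (equivalently \eqref{selfmap2}). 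Relatedly, describing the kernel scheme as a ``finite \'etale Artin--Schreier-type cover'' is not correct for the same reason: the scheme $V(y + a_1^{1/p^{n-1}}y^p + \cdots + a_n y^{p^n})\subset \mathbb{A}^1_R$ is \'etale (nonvanishing derivative) but not finite unless $a_n$ is invertible. What is actually needed is the henselian lifting property for \'etale maps (as in \Cref{hensrigidity} and its analogue \Cref{rigidity2}), not a classification of finite \'etale covers. Moreover, your argument implicitly assumes the cokernel of \eqref{selfmap2} vanishes on strictly henselian rings (this is the ``up to shift'' in your sentence) without proving it; that surjectivity also comes from \'etale lifting after checking surjectivity on the algebraically closed residue field, and must be addressed. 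Finally, the phrase ``injection of finite point sets of the same cardinality'' also quietly uses the reduction to residue fields (which is precisely \Cref{hensrigidity}), so the logic as written is somewhat circular.

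Second, the $t$-exactness argument has a genuine gap on the left-exact side. Right $t$-exactness can indeed be deduced from cocontinuity plus connectivity on generators (though you should say explicitly that the relevant fibers are concentrated in degree zero because the map $B_{\mathrm{perf}}\to B'_{\mathrm{perf}}$ is surjective). But your claim that left $t$-exactness ``follows from the essential image identification together with the right $t$-exactness'' does not work abstractly: a right $t$-exact equivalence between two derived $\infty$-categories of hearts need not be $t$-exact. (What one gets from right $t$-exactness of $\rhcov$ is left $t$-exactness of its inverse, which gives $\mathcal{D}_{\mathrm{alg}}^{\geq 0}\subseteq \rhcov(\mathcal{C}^{\geq 0})$, the wrong containment.) The paper instead argues directly: for coconnectivity one first treats flat perfect $R'$ using \eqref{cofibexpr}, then uses henselian rigidity to replace an arbitrary $R'$ by a flat henselization; for connectivity one reduces via \Cref{hensrigidity} to algebraically closed fields $k$ and uses that \eqref{selfmap2} is surjective on $k$. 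Some such concrete argument is needed here.

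Lastly, a minor point: \Cref{FhasimageinRH} as stated works with $j_!\mathbb{F}_p$, not $j_!(\mathbb{Z}/p^n)$; the $\mathbb{Z}/p^n$ case requires a brief d\'evissage that you should at least flag.
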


By the full faithfulness of $\pi^*$, the essential image of $\rhcov$ consists of 
precisely those objects $M \in \mathcal{D}(\wfr)_{\ptors}$ 
such that $W \otimes^{\mathbb{L}}_{\wfr} M$ belongs to the 
essential image of $\pi^*$. 
We have already seen (in \Cref{FhasimageinRH}) that any such $M$ is algebraic, so it remains to prove the
converse. 

\begin{proposition} 
\label{hensrigidity}
Let $M \in \mathcal{D}(\wfr)$ be algebraic. Let $(R', I)$ be a henselian pair of
perfect $R$-algebras (so $R'/I$ is also assumed perfect). Then 
the map 
$W(R') \otimes_{\wfr}^{\mathbb{L}} M 
\to W(R'/I) 
\otimes_{\wfr}^{\mathbb{L}} M
$ is an equivalence. 
\end{proposition}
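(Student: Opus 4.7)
The plan is to reduce, via cocontinuity and \Cref{algebraicgenerators}, to the case of explicit compact generators of $\mathcal{D}_{\mathrm{alg}}(\wfr)$ and then to identify the resulting cofiber as the étale cohomology of a finite locally constant $\mathbb{F}_p$-sheaf on $\spec(R)$, after which classical henselian rigidity in positive characteristic applies. Both $M \mapsto W(R') \otimes_{\wfr}^{\mathbb{L}} M$ and $M \mapsto W(R'/I) \otimes_{\wfr}^{\mathbb{L}} M$ are cocontinuous functors on $\mathcal{D}_{\mathrm{alg}}(\wfr)$, so by \Cref{algebraicgenerators} it suffices to treat $M = \wfr/(p^j, \wfr T)$ for some monic $T = F^n + a_1 F^{n-1} + \cdots + a_n \in \wfr$ and $j \geq 1$. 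Using that $\wfr$ is right $W(R)$-free on $\{F^i\}_{i \in \mathbb{Z}}$, one verifies $p\wfr \cap \wfr T = p \wfr T$, yielding a short exact sequence
\[ 0 \to \wfr/(p^{j-1}, \wfr T) \xrightarrow{\cdot p} \wfr/(p^j, \wfr T) \to \wfr/(p, \wfr T) \to 0; \]
iterating reduces us to the case $j = 1$, so I may assume $M = R[F^{\pm 1}]/R[F^{\pm 1}] \overline{T}$ for $\overline{T}$ the mod $p$ reduction of $T$.

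Next I would set $N := R[F]/R[F] \overline{T}$, which is $R$-free of rank $n$ on the basis $\{1, F, \ldots, F^{n-1}\}$ with $F$ acting as the composition of the coordinatewise Frobenius $\phi$ and the companion matrix $C_{\overline{T}}$ of $\overline{T}$, so that $M = N_{\mathrm{perf}} = \mathrm{colim}_k(N \xrightarrow{F} N \xrightarrow{F} \cdots)$. The key observation is that the self-map $F - 1$ of the two-term complex $[N \xrightarrow{F-1} N]$ is null-homotopic via the identity $N \to N$, so $F: N \to N$ induces the identity on $\mathrm{cofib}(F - 1: N \to N)$; passing to the colimit and then tensoring with $R'$, the cofiber description of \Cref{relativetensor} collapses to give
\[ W(R') \otimes_{\wfr}^{\mathbb{L}} M \;\simeq\; \mathrm{cofib}\bigl( C_{\overline{T}} \phi - 1 \colon (R')^n \to (R')^n \bigr). \]
Now the $R$-scheme $Y := \spec\bigl( R[x_1, \ldots, x_n]/(C_{\overline{T}}(x_1^p, \ldots, x_n^p) - (x_1, \ldots, x_n)) \bigr)$ is finite étale over $\spec(R)$ (its Jacobian matrix is $-I$ since $p = 0$) and represents the kernel of $C_{\overline{T}} \phi - 1 \colon \mathbb{G}_a^n \to \mathbb{G}_a^n$. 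Writing $\mathcal{L}$ for the corresponding locally constant constructible sheaf of $\mathbb{F}_p$-vector spaces on $\spec(R)_{\mathrm{et}}$, the short exact sequence of étale sheaves
\[ 0 \to \mathcal{L} \to \mathbb{G}_a^n \xrightarrow{C_{\overline{T}} \phi - 1} \mathbb{G}_a^n \to 0 \]
together with Serre vanishing for $\mathbb{G}_a^n$ on affines identifies $W(R') \otimes_{\wfr}^{\mathbb{L}} M$ with $R\Gamma(\spec(R')_{\mathrm{et}}, \mathcal{L})[1]$, and similarly for $R'/I$.

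The proposition thus reduces to the statement $R\Gamma(\spec(R')_{\mathrm{et}}, \mathcal{L}) \simeq R\Gamma(\spec(R'/I)_{\mathrm{et}}, \mathcal{L})$ for a henselian pair $(R', I)$ and a torsion constructible étale sheaf $\mathcal{L}$, which is Gabber's affine analog of proper base change; in the present $\mathbb{F}_p$-setting it reduces elementarily via Artin--Schreier to the unique lifting of finite étale covers along henselian pairs. I expect the principal subtlety to be the identification in paragraph two, specifically verifying that $\mathrm{cofib}(F - 1)$ is insensitive to passing from $N$ to its (not finitely generated) perfection $M$; once this is in place, the remainder is a direct computation together with the standard rigidity input.
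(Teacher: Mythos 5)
Your overall strategy coincides with the paper's: reduce via cocontinuity and \Cref{algebraicgenerators} to the compact generators $\wfr/(p^j, \wfr T)$, and then deduce the rigidity from the Hensel lifting property for the resulting $p$-linear equation. The extra dévissage from $j$ to $1$ (via $p\wfr \cap \wfr T = p\wfr T$) is fine but unnecessary; the paper handles all $j$ at once by dévissage on $p$-torsion. Your identification of the cofiber with $\mathrm{cofib}(C_{\bar T}\phi - 1\colon (R')^n \to (R')^n)$ via the null-homotopy of $F$ on $\mathrm{cofib}(F-1\colon N \to N)$ is correct and is a nice way to land on a two-term $p$-linear complex; the paper instead uses the free resolution $R[F^{\pm 1}] \xrightarrow{\cdot \bar T} R[F^{\pm 1}]$ directly and arrives at the one-variable polynomial $x \mapsto x^{1/p^n} + a_1^{1/p^{n-1}}x^{1/p^{n-1}} + \dots + a_n x$ on $I$.

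There is a genuine error in the final paragraph. The scheme $Y$ is \'etale over $\spec(R)$ (the Jacobian is $-I$, as you say), but it is \emph{not} finite in general, and $\mathcal{L}$ is not locally constant. The determinant of $C_{\bar T}$ is $\pm \bar a_n$, which need not be a unit. Concretely, take $R = \mathbb{F}_p[t^{1/p^\infty}]$ and $\bar T = F - t$: then $Y = \spec R[x]/(tx^p - x)$ has degree $p$ where $t$ is invertible and degree $1$ over $V(t)$, so $Y \to \spec R$ is quasi-finite \'etale but not finite, and $\mathcal{L} = \ker(t\phi - 1\colon \mathbb{G}_a \to \mathbb{G}_a)$ is constructible but not locally constant. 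Consequently the claimed "elementary reduction via Artin--Schreier to the unique lifting of finite \'etale covers along henselian pairs" does not apply. Gabber's affine analogue of proper base change does still give the conclusion (it needs only torsion, not constructibility or local constancy), so your proof survives if you treat it as a black box, but invoking it is much heavier machinery than the problem requires. What you actually need (and what the paper uses) is only the defining lifting property of a henselian pair applied to the \'etale self-map $\mathbb{A}^n_R \to \mathbb{A}^n_R$ given by $C_{\bar T}\phi - 1$: for $y \in I^n$ the system has a unique solution $x \in I^n$ lifting $x \equiv 0 \pmod{I}$. Replacing your appeal to Gabber's theorem and "finite \'etale covers" with this direct statement both fixes the gap and brings your argument into line with the paper's, which does the same thing in one variable after precomposing with the $n$th Frobenius.
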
 
\begin{proof} 
In view of the generators of the localizing subcategory of algebraic objects of
$\mathcal{D}(\wfr)$ of \Cref{algebraicgenerators}, we see that
it suffices to show that any monic element $F^n + a_1 F^{n-1} + \dots + a_n$
acts invertibly on $I= \mathrm{ker}(W(R') \to W(R'/I))/p$ by right multiplication. 
In other words, it suffices to show that the self-map of $I$ given by 
\begin{equation} \label{selfmap} x \mapsto x^{1/p^n} + a_1^{1/p^{n-1}} x^{1/p^{n-1}} + \dots
+  a_n x  \end{equation}
is an isomorphism. 
Since $I$ is perfect as a nonunital ring, it suffices to precompose by the $n$th
iterate of Frobenius and to
show that the map 
\begin{equation}\label{selfmap2}  x \mapsto x  + a_1^{1/p^{n-1}} x^p + \dots + a_n x^{p^n}, \quad I \to I
\end{equation}
is an isomorphism. 
However, this follows from the definition of a henselian pair. 
In fact, for any $t \in I$, the polynomial $x  + a_1^{1/p^{n-1}} x^p + \dots + a_n x^{p^n} -
t\in R'[x]$ has
nonvanishing derivative, so any solution in $R'/I$ (e.g., zero) lifts uniquely to
a solution in $I$, whence the claim. 
\end{proof}

\begin{proposition} 
\label{algebraicinessimage}
Let $M \in \mathcal{D}(\wfr)$ be algebraic. Then 
$W \otimes^{\mathbb{L}}_{\wfr} M[-1] \in \mathcal{D}(\schp_R)$ belongs to the
essential image of $\pi^*: \mathcal{D}(\spec(R)_{\mathrm{et}})_{\ptors} \to
\mathcal{D}( \schpp_R)$. \end{proposition}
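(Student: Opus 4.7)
The plan is to verify the two criteria of \Cref{imageofpi} for the presheaf $\mathcal{F}$ on $\schp_R$ given by $\mathcal{F}(R') := W(R') \otimes^{\mathbb{L}}_{\wfr} M[-1]$. The collection of $M \in \mathcal{D}(\wfr)_{\ptors}$ whose associated $\mathcal{F}$ lies in the essential image of $\pi^*$ forms a localizing subcategory, because the image of the fully faithful cocontinuous exact left adjoint $\pi^*$ is closed under colimits, shifts, and extensions, and because $M \mapsto W \otimes^{\mathbb{L}}_{\wfr} M[-1]$ is cocontinuous. By \Cref{algebraicgenerators} it therefore suffices to check the criteria for the generators $M = \wfr/(p^j, \wfr T)$ of \Cref{examplesofalgebraic}, and a d\'evissage in the $p$-adic filtration further reduces to the case $j=1$, i.e.\ $M = R[F^{\pm 1}]/R[F^{\pm 1}] T$ with $T = F^n + a_1 F^{n-1} + \dots + a_n$ monic.

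Condition (1) (commutation with filtered colimits of perfect $R$-algebras) is in fact automatic for every $M \in \mathcal{D}(\wfr)_{\ptors}$: by \Cref{presheafvssheaf} the construction is already a presheaf, and after working up the Postnikov tower of $M$ we may reduce to $pM = 0$, in which case the cofiber expression \eqref{cofibexpr} identifies $\mathcal{F}(R')[1]$ with $\mathrm{cofib}(F - 1 : R' \otimes^{\mathbb{L}}_R M \to R' \otimes^{\mathbb{L}}_R M)$, which visibly preserves filtered colimits in $R'$.

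The substantive content is condition (2). Given a local map $R_1 \to R_2$ between perfect strictly henselian local rings over $R$ with residue fields $k_1, k_2$ (necessarily perfect and algebraically closed in characteristic $p$), applying \Cref{hensrigidity} to the henselian pairs $(R_i, \mathfrak{m}_i)$ reduces the problem to showing that $W(k_1) \otimes^{\mathbb{L}}_{\wfr} M \to W(k_2) \otimes^{\mathbb{L}}_{\wfr} M$ is an equivalence. For the generator $M = R[F^{\pm 1}]/R[F^{\pm 1}] T$, the short exact sequence $0 \to R[F^{\pm 1}] \xrightarrow{\cdot T} R[F^{\pm 1}] \to R[F^{\pm 1}]/R[F^{\pm 1}] T \to 0$ (exact since $T$ is monic) identifies $W(k) \otimes^{\mathbb{L}}_{\wfr} M$ with $\mathrm{cofib}(\cdot T : k \to k)$; since $F$ acts on $W(k)$ via the inverse Witt vector Frobenius (the $p$-th root on perfect $k$), right multiplication by $T$ is the $\mathbb{F}_p$-linear operator $x \mapsto x^{1/p^n} + a_1 x^{1/p^{n-1}} + \dots + a_n x$. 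After the substitution $x = y^{p^n}$ this becomes the additive polynomial $y \mapsto y + a_1 y^p + \dots + a_n y^{p^n}$, which is separable (derivative $1$), hence surjective on any algebraically closed extension, with kernel the finite $\mathbb{F}_p$-vector space of its roots. Since $k_1$ is itself algebraically closed, those roots are already contained in $k_1$, so the kernel is the same when computed over $k_1$ and over $k_2$, giving the required equivalence.

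The main obstacle is this last calculation: one must carefully track the right $\wfr$-module structure on $W(k)$ (in which $F$ acts by the \emph{inverse} Witt vector Frobenius) and recognize the resulting additive polynomial as separable so that its roots are stable under enlargement of the ambient algebraically closed field; once this is done the other steps (closure of the class of good $M$ under colimits and d\'evissage to the generators) are formal.
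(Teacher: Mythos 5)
Your proof is correct and follows essentially the same route as the paper: reduce via \Cref{presheafvssheaf} and \Cref{hensrigidity} to algebraically closed fields, reduce to the generators $\wfr/(p,\wfr T)$, and recognize right multiplication by $T$ as a separable additive polynomial whose roots are stable under extensions of algebraically closed fields (you usefully spell out the d\'evissage from $j$ to $j=1$, which the paper leaves implicit). One small slip: the Frobenius twist should also hit the coefficients, so the operator is $x \mapsto x^{1/p^n} + a_1^{1/p^{n-1}} x^{1/p^{n-1}} + \dots + a_n x$ (rather than with plain $a_i$); this does not affect the separability or the conclusion.
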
 
\begin{proof} 
Thanks to \Cref{presheafvssheaf}, the construction 
$W \otimes^{\mathbb{L}}_{\wfr} M[-1]$ as a functor on perfect $R$-algebras
commutes with filtered colimits. 
Let $R_1 \to R_2$ be a local homomorphism of strictly henselian, perfect
$R$-algebras. We show that the map 
\[ W(R_1) \otimes^{\mathbb{L}}_{\wfr} M \to W(R_2) \otimes^{\mathbb{L}}_{\wfr} M  \]
is an equivalence, which will suffice, cf.~\Cref{imageofpi}. Thanks to
\Cref{hensrigidity}, it suffices to show this when
$R_1, R_2$ are algebraically closed fields. 
Since the algebraic subcategory of $\mathcal{D}(\wfr)$ is generated as a
localizing subcategory
(cf.~\Cref{algebraicgenerators}) by
objects of the form $\wfr/(\wfr T, p)$ for $T \in \wfr$ of
the form $F^n + a_1 F^{n-1} + \dots + a_n$, it suffices to assume $M$ to be of
this form. 

In this case, 
$W(R_1) \otimes^{\mathbb{L}}_{\wfr} M$ is given by 
the cofiber of the map 
\eqref{selfmap} on $R_1$. This is clearly zero in $H^0$ and a finite set of
cardinality $\leq p^n$ in
$H^{-1}$ (the set of solutions of the equation $x^{1/p^n}+ a_1^{1/p^{n-1}}x^{1/p^{n-1}} + \dots + a_n x =0
$), which is unchanged under extensions of algebraically closed fields. 
\end{proof} 

\begin{proof}[Proof of \Cref{mainthm}] 
We have already seen the fully faithful, cocontinuous functor $\rhcov$ in 
\Cref{RHcons}. The essential image was seen 
in \Cref{FhasimageinRH} to be contained inside the algebraic subcategory of 
$\mathcal{D}(\wfr)$, but \Cref{algebraicinessimage} now shows that the
essential image  is precisely the algebraic subcategory. 

It only remains to check $t$-exactness, i.e., that 
if
an algebraic object
$M \in \mathcal{D}(\wfr)$ is connective (resp.~coconnective), then 
$ W \otimes^{\mathbb{L}}_{\wfr} M[-1] \in \mathcal{D}(
\schpp_R)$ is connective (resp.~coconnective). Here we also use
that $\pi^*$ reflects connectivity (resp.~coconnectivity). We carry this out in
the following two paragraphs, and implicitly use \Cref{presheafvssheaf}. 

If $M$ is coconnective and $M[1/p] = 0$, then 
for any \emph{flat}, perfect $R$-algebra $R'$, we have that $W(R') \otimes^{\mathbb{L}}_{\wfr}
M [-1]$ is coconnective.
In fact, this reduces by taking colimits and extensions to the case where $M$ is discrete and annihilated by $p$,
whence the result follows from the expression \eqref{cofibexpr}. 
We thus obtain the same coconnectivity claim for any perfect $R$-algebra $R'$,
since we can replace (thanks to \Cref{hensrigidity}) $R'$ with the henselization
(along the kernel) of the
perfection of a polynomial $R$-algebra surjecting onto $R'$, which is flat over
$R$. 

If $M \in \mathcal{D}(\wfr)$ is connective and algebraic, 
then for any strictly henselian, perfect $R$-algebra $R'$, 
we claim that $W(R') \otimes^{\mathbb{L}}_{\wfr} M[-1]$ is connective.
By  \Cref{hensrigidity}, we may assume that $R' = k$ is actually an algebraically
closed field $k$. 
In light of \Cref{algebraicgenerators} (and closure under colimits
and extensions), we may
assume that $M = W(R) [F^{\pm 1}]/(p, T )$ for 
$T = F^n + a_1 F^{n-1} + \dots + a_n, a_1, \dots, a_n \in W(R)$. 
But right multiplication by $T$, i.e. the map 
\eqref{selfmap}
is clearly surjective on $k$; in fact, this reduces to showing that 
\eqref{selfmap2} is surjective on $k$, which follows because $k$ is
algebraically closed. \end{proof}

\begin{proof}[Proof of \Cref{BLthm}] 
Note that \Cref{BLthm} is stated for an arbitrary $\mathbb{F}_p$-algebra $R$, not
necessarily assumed to be perfect; however, the statement reduces to the perfect
case since passage to the perfection does not change the \'etale site. 
This reduction follows because the multiplicative subset $\left\{F^{i}\right\}_{i \geq 0}
\subset R[F]$ satisfies the left Ore condition, and the localization is given by
$R_{\mathrm{perf}}[F^{\pm 1}]$. The restriction functor on derived
$\infty$-categories $\mathcal{D}(R_{\mathrm{perf}}[F^{\pm 1}]) \to
\mathcal{D}(R[F])$ is fully faithful, cf.~\cite[Sec.~7.2.3]{HA}. 
Hence, we assume that $R$ is perfect, whence the result follows by passing to
$\mathbb{F}_p$-modules in \Cref{mainthm}. 
\end{proof}

\begin{remark} 
The treatment in \cite{BLRH, BocklePink} first proves the
correspondence at the level of abelian categories.  
Here, it is essential to work at the level of derived $\infty$-categories
throughout, for the use of Morita theory. Of course, in the
contravariant approach \cite{EmertonKisin}, the use of derived
$\infty$-categories is essential as the functors involved are not
$t$-exact. \end{remark} 
\begin{remark} 
The equivalence of \Cref{mainthm} yields an equivalence on compact objects: one
obtains a $t$-exact equivalence between the bounded constructible
$p$-power torsion derived
$\infty$-category of $\spec(R)_{\mathrm{et}}$ and 
the bounded  derived $\infty$-category of $\wfr$ consisting of objects with holonomic
cohomologies, by 
\Cref{compactobjofderivedcat} and \Cref{compactoverwfr}. 
\end{remark}

\begin{remark}[Compatibilities] 
As proved in \cite{BLRH}, \Cref{BLthm} is compatible with pullback, proper pushforward, and 
tensor products. The first two claims can also be proved analogously using the
perfect site: the functor from Frobenius modules to sheaves on the perfect site
is compatible with pullback and (when globalized as in \cite[Sec.~10]{BLRH}) with proper pushforward. 
However, the compatibility with the symmetric monoidal structure appears less clear
from this perspective; for example, the description of $\rhcov(\mathcal{F}) =
\mathrm{RHom}_{\mathcal{D}(\schp_R)}(W, \pi^*
\mathcal{F}[1])$ does not have an evident symmetric monoidal structure. 
\end{remark} 

\section{The contravariant Riemann--Hilbert correspondence}

Let $R$ be a  regular  noetherian $\mathbb{F}_p$-algebra. 
In this case, the work of Emerton--Kisin \cite{EmertonKisin} gives a contravariant description of the
bounded derived $\infty$-category of constructible $\mathbb{F}_p$-sheaves,
$\mathcal{D}^b_{\mathrm{cons}}( \spec(R)_{\mathrm{et}}, \mathbb{F}_p)$, in terms
of finitely generated unit Frobenius modules.\footnote{The work
\cite{EmertonKisin} also treats the case of $\mathbb{Z}/p^n$-sheaves when a lift
of $R$ to $\mathbb{Z}/p^n$ is specified, and the correspondence has been extended to certain singular
cases in \cite{Ohkawa, Schedlmeier}.
An analog of this result for arbitrary $\mathbb{F}_p$-algebras can also be deduced from \Cref{BLthm} using a duality argument at the
level of Frobenius modules, cf.~\cite[Sec.~12]{BLRH}. 
We do not treat these extensions here.}

In this section, we give another
proof of the contravariant Riemann--Hilbert correspondence of
\cite{EmertonKisin}
(\Cref{EKthm}, reproduced below as \Cref{EKthm2}). The essential observation  is that
the subcategory
$\mathcal{D}^{\mathrm{fproj}}_{\mathrm{unit}}(R[F])^{op} \subset \mathcal{D}(R[F])$ 
consisting of objects of finite projective dimension whose cohomology $R[F]$-modules are unit
embeds fully faithfully into the derived $\infty$-category of sheaves
on the \emph{big} \'etale site of $\spec(R)$ (\Cref{fullfaithfulnessofSol}). 

\subsection{Unit $R[F]$-modules}
Let $R$ be an  $\mathbb{F}_p$-algebra whose Frobenius $\phi: R \to R$ is flat. 
In this subsection, we review some basic facts about unit $R[F]$-modules.


We recall that an $R[F]$-module can be equivalently regarded as an $R$-module $M$ equipped with an $R$-linear map $\phi^* M
\to M$ (adjoint to $F: M \to \phi_* M$); we will abuse notation and often write
this map as $F$ too.

\begin{definition}[{Unit $R[F]$-modules}] 
An $R[F]$-module $M$ is \emph{unit} if the map $F: \phi^* M \to M$ of $R$-modules
is an isomorphism. 

Since $\phi^*$ is exact by assumption, the collection of unit $R[F]$-modules is
closed under all colimits, finite limits, and extensions inside the category of all
$R[F]$-modules; in particular, it is an abelian subcategory. 
\end{definition}
When $R$ is regular noetherian,\footnote{Recall that a noetherian
$\mathbb{F}_p$-algebra is regular if and only if its Frobenius is flat,
\cite{Kunz1}.} 
the abelian category of unit $R[F]$-modules is extensively studied by Lyubeznik,
\cite{Lyubeznik}, under the name \emph{$F$-modules}.

\begin{remark}[Unitality via the perfection] 
An $R[F]$-module $M$ is unit if and only if the base-change $R_{\mathrm{perf}}
\otimes_R M$, considered as an $R_{\mathrm{perf}}[F]$-module, has $F$ acting
invertibly. This follows because the condition that $\phi^* M \to M$ is an
isomorphism can be checked after faithfully flat
base-change, and for a perfect $\mathbb{F}_p$-algebra is equivalent to $F$
acting invertibly. 
\end{remark} 
\begin{remark}[{Unit $R[F]$-modules as modules over a ring}]
\label{unitasmodules}
Suppose $R$ is $F$-finite regular noetherian. 
The abelian category of unit $R[F]$-modules admits a compact projective
generator. In fact, this follows because the forgetful functor from unit
$R[F]$-modules to $R$-modules preserves limits and colimits (because $\phi^*$
preserves limits by the $F$-finiteness assumption; preservation of colimits
always holds), and so admits a
left adjoint; the image of $R$ under this left adjoint is a compact projective
generator. Consequently, the category of unit $R[F]$-modules is identified with
the category of modules over a certain large ring (containing $R[F]$). 
A description of this ring (at least when $R$ is smooth over a perfect field) in terms of differential operators on $R$ appears in \cite[Prop.~15.1.4]{EmertonKisin} (see
also \cite[Sec.~15.2]{EmertonKisin}). 
\end{remark}

\begin{construction}[{Unitalization, cf.~\cite[Def.~1.9]{Lyubeznik} or
\cite[Cons.~11.2.2]{BLRH}}] 
\label{unitalization}
Let $M$ be an $R$-module together with a map $f: M \to \phi^* M$ of $R$-modules. The colimit $$M
\stackrel{f}{\to} \phi^* M  \stackrel{\phi^* f}{\to } (\phi^{*})^2 M \to \dots$$
naturally has the structure of a unit $R[F]$-module $M^{\mathrm{unit}}$: in
fact, the construction provides an identification of $M^{\mathrm{unit}}$ and
$\phi^* M^{\mathrm{unit}}$. 
More generally, one can carry out this construction when $M \in \mathcal{D}(R)$. 
\end{construction} 

\begin{proposition} 
\label{univpropertyunit}
Let $M $ be an $R$-module equipped with a map $f: M \to \phi^* M$. 
Then the unitalization $M^{\mathrm{unit}}$ of $(M, f)$ has the following universal mapping property: 
given $N \in \mathcal{D}(R[F])$, there is an equivalence in
$\mathcal{D}(\mathbb{Z})$, 
$$\rhom_{R[F]}(M^{\mathrm{unit}}, N) =  
\mathrm{eq} \left( \rhom_R(M, N) \rightrightarrows \rhom_R(M, N) \right)
$$
where the first map is the identity and the second map carries $v: M \to N$ to 
the composite
$M \stackrel{f}{\to} \phi^* M \stackrel{\phi^* v}{\to} \phi^* N
\stackrel{F}{\to}N$. 
\end{proposition}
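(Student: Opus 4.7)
The plan is to present $M^{\mathrm{unit}}$ as an explicit cofiber in $\mathcal{D}(R[F])$ and then apply $\rhom_{R[F]}(-,N)$, using the free-forgetful adjunction between $\mathcal{D}(R[F])$ and $\mathcal{D}(R)$.

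First, I identify the free $R[F]$-module $R[F] \otimes^{\mathbb{L}}_R M$: as an object of $\mathcal{D}(R)$ it decomposes as $\bigoplus_{n \ge 0} \phi^{n*} M$, with $F^n \otimes M$ corresponding to the $n$-th summand $\phi^{n*}M$ and with $F$ acting as the Frobenius-semilinear shift. The standard mapping-telescope presentation of the sequential colimit $M^{\mathrm{unit}} = \mathrm{colim}_n \phi^{n*}M$ then gives a cofiber sequence in $\mathcal{D}(R[F])$
$$R[F] \otimes^{\mathbb{L}}_R M \xrightarrow{\; s - \mathrm{id} \;} R[F] \otimes^{\mathbb{L}}_R M \to M^{\mathrm{unit}},$$
where $s$ acts on the summand $\phi^{n*}M$ as $\phi^{n*}f : \phi^{n*}M \to \phi^{(n+1)*}M$ followed by inclusion into the next summand; the $R[F]$-linearity of $s$ amounts to the naturality identity $F \circ \phi^{n*}f = \phi^{(n+1)*}f \circ F$. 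Under the free-forgetful adjunction, $s$ corresponds to the $R$-linear map $M \xrightarrow{f} \phi^*M \hookrightarrow R[F] \otimes^{\mathbb{L}}_R M$ (inclusion as the first summand, $r \otimes m \mapsto rF \otimes m$), while $\mathrm{id}$ corresponds to the unit $m \mapsto 1 \otimes m$.

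Applying $\rhom_{R[F]}(-, N)$ to this cofiber sequence yields a fiber sequence
$$\rhom_{R[F]}(M^{\mathrm{unit}}, N) \to \rhom_R(M, N) \rightrightarrows \rhom_R(M, N),$$
using the derived adjunction $\rhom_{R[F]}(R[F] \otimes^{\mathbb{L}}_R M, N) \simeq \rhom_R(M, N)$. One parallel arrow (the dual of $\mathrm{id}$) is the identity; the remaining task is to show the dual of $s$ is $v \mapsto F_N \circ \phi^* v \circ f$. Given $v: M \to N$ and the corresponding $R[F]$-linear map $g_v: R[F] \otimes^{\mathbb{L}}_R M \to N$, the composite $(g_v \circ s)|_M$ equals $M \xrightarrow{f} \phi^*M \to R[F] \otimes^{\mathbb{L}}_R M \xrightarrow{g_v} N$, and by $R[F]$-linearity of $g_v$ the last two arrows send $r \otimes m \in \phi^*M$ to $r \cdot F_N(v(m)) = (F_N \circ \phi^* v)(r \otimes m)$, producing exactly $F_N \circ \phi^* v \circ f$.

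The main delicate point is setting up the cofiber presentation, including matching the $F$-action on the cofiber with the canonical $F$-action on $M^{\mathrm{unit}}$ from its telescope construction; this is ensured by the fact that the forgetful functor $\mathcal{D}(R[F]) \to \mathcal{D}(R)$ is exact and preserves colimits, so the underlying $R$-module of the cofiber is the telescope and the $F$-action induced by summand-shifting on $R[F]\otimes^{\mathbb{L}}_R M$ agrees with the one coming from $\phi^* M^{\mathrm{unit}} \simeq M^{\mathrm{unit}}$. With that in hand, the rest is formal bookkeeping via the free-forgetful adjunction.
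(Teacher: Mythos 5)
Your proof is correct and follows essentially the same route as the paper: the paper's proof simply cites \cite[Prop.~11.2.5]{BLRH} for the cofiber presentation $R[F]\otimes_R M \xrightarrow{s-\mathrm{id}} R[F]\otimes_R M \to M^{\mathrm{unit}}$, while you construct that presentation explicitly (identifying $R[F]\otimes_R M \simeq \bigoplus_{n\ge 0}\phi^{n*}M$, checking $R[F]$-linearity of the shift, and matching the induced $F$-action on the cofiber with the canonical one on the telescope) and then deduce the universal property by applying $\rhom_{R[F]}(-,N)$ and the free--forgetful adjunction. So it is the same argument, just carried out in detail rather than outsourced.
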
 
\begin{proof} 
This follows from the presentation of the unitalization given in
\cite[Prop.~11.2.5]{BLRH} upon taking maps in the derived $\infty$-category. 
\end{proof}

\begin{definition}[{Unit objects of $\mathcal{D}(R[F])$}]
An object of $\mathcal{D}(R[F])$ is said to be \emph{unit} if all the cohomology
$R[F]$-modules are unit. We let $\mathcal{D}_{\mathrm{unit}}(R[F]) \subset
\mathcal{D}(R[F])$ denote the subcategory of unit objects. \end{definition} 

The subcategory $\mathcal{D}_{\mathrm{unit}}(R[F]) \subset \mathcal{D}(R[F])$ is
closed under colimits and finite limits, since unit $R[F]$-modules are closed
under all colimits, finite limits, and extensions inside all $R[F]$-modules. 
Moreover, 
$\mathcal{D}_{\mathrm{unit}}(R[F])$ 
is accessible by general accessibility results 
(e.g., the stability of accessible $\infty$-categories under limits,
\cite[Prop.~5.4.7.3]{HTT}) and is therefore presentable stable. 

Note that 
$\mathcal{D}_{\mathrm{unit}}(R[F])$ inherits a natural $t$-structure from the
inclusion into $\mathcal{D}(R[F])$.

\begin{proposition} 
\label{unitisderivedofheart}
$\mathcal{D}_{\mathrm{unit}}(R[F])^{\leq 0}$ is Grothendieck prestable.
Moreover, 
$\mathcal{D}_{\mathrm{unit}}(R[F])$ is identified with the 
derived $\infty$-category of its heart (i.e., the abelian category of unit
$R[F]$-modules). 
\end{proposition}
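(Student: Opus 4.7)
My plan is to apply the criterion of \cite[Prop.~C.5.4.5]{SAG} (see also \cite[Rem.~C.5.4.11]{SAG}), mirroring the proof of \Cref{derivedTptors}. This reduces the proposition to three verifications: (a) $\mathcal{D}_{\mathrm{unit}}(R[F])^{\leq 0}$ is Grothendieck prestable; (b) $\mathcal{D}_{\mathrm{unit}}(R[F])$ is left separated; (c) $\mathcal{D}_{\mathrm{unit}}(R[F])^{\leq 0}$ is 0-complicial.

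Points (a) and (b) are essentially formal. For (a), the subcategory $\mathcal{D}_{\mathrm{unit}}(R[F])^{\leq 0} \subset \mathcal{D}(R[F])^{\leq 0}$ is closed under colimits, finite limits, and extensions (as recorded just before the statement of the proposition), so it inherits Grothendieck prestability from the ambient category by \cite[Prop.~C.5.2.1]{SAG}. For (b), left separated-ness is inherited from the ambient $\mathcal{D}(R[F])$, which is the derived $\infty$-category of the Grothendieck abelian category of all $R[F]$-modules.

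The essential work is (c), which I will establish via the universal property of unitalization (\Cref{univpropertyunit}): an $R[F]$-linear map $P^{\mathrm{unit}} \to N$ corresponds to an $R$-linear map $v : P \to N$ satisfying $v = F \circ \phi^* v \circ f$ for the chosen map $f : P \to \phi^* P$. Given $X \in \mathcal{D}_{\mathrm{unit}}(R[F])^{\leq 0}$, set $M = H^0(X)$, let $P = \bigoplus_{m \in M} R \cdot e_m$, and let $v : P \to M$ be the tautological surjection $e_m \mapsto m$. Since $M$ is unit, $F : \phi^* M \xrightarrow{\sim} M$ is invertible; for each $m$ I fix a finite expansion $F^{-1}(m) = \sum_i r_{m,i} \otimes n_{m,i}$ in $\phi^* M$ and define $f : P \to \phi^* P$ by $f(e_m) = \sum_i r_{m,i} \otimes e_{n_{m,i}}$. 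Then by construction $\phi^* v \circ f = F^{-1} v$, whence $v = F \circ \phi^* v \circ f$, so \Cref{univpropertyunit} yields an $R[F]$-linear map $P^{\mathrm{unit}} \to M$ extending $v$, hence surjective on $H^0$. Because Frobenius on $R$ is flat, each $(\phi^*)^n P$ is a discrete $R$-module, so their filtered colimit $P^{\mathrm{unit}}$ is discrete. Finally, since for $P^{\mathrm{unit}}$ discrete and $X$ connective one has $\hom(P^{\mathrm{unit}}, X) \cong \hom(P^{\mathrm{unit}}, H^0(X))$, the map $P^{\mathrm{unit}} \to M$ lifts uniquely to $P^{\mathrm{unit}} \to X$, still surjective on $H^0$; this gives 0-compliciality, and combining (a), (b), (c) with \cite[Prop.~C.5.4.5]{SAG} yields the proposition.

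The subtle point I anticipate is the simultaneous choice of $f$ across all basis elements of $P$: taking $P$ to be free on the whole set $M$ itself avoids any iterative enlargement, since every $n_{m,i}$ appearing in an expansion of $F^{-1}(m)$ is itself an index in $P$, so $e_{n_{m,i}} \in P$ is automatically available to define $f(e_m)$.
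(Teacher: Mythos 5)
Your approach to parts (a) and (b) matches the paper, and the construction of the pair $(P, f : P \to \phi^*P)$ is essentially the same as the paper's (the paper also picks a free $R$-module surjecting onto $H^0$ and lifts through $\phi^*P$). The problem is in your final lifting step, which is where the argument has a real gap.

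You assert that for $P^{\mathrm{unit}}$ discrete and $X$ connective, $\hom(P^{\mathrm{unit}}, X) \cong \hom(P^{\mathrm{unit}}, H^0(X))$; this is false. From the fiber sequence $\tau^{\leq -1}X \to X \to H^0(X)$ and the fact that $P^{\mathrm{unit}}$ has projective dimension $\leq 1$ over $R[F]$ (the two-term presentation underlying \Cref{univpropertyunit}), one gets that $\hom_{R[F]}(P^{\mathrm{unit}}, X) \to \hom_{R[F]}(P^{\mathrm{unit}}, H^0(X))$ is \emph{surjective} — but its kernel can be nonzero, since $\mathrm{Ext}^1_{R[F]}(P^{\mathrm{unit}}, H^{-1}(X))$ need not vanish. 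So the lift exists but is not unique, and the existence itself rests on the projective dimension bound, which you do not invoke. As written, this step is unjustified.

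The detour to $H^0(X)$ is unnecessary. \Cref{univpropertyunit} is a \emph{derived} statement: it identifies $\rhom_{R[F]}(P^{\mathrm{unit}}, N)$ as an equalizer of $\rhom_R(P, N)$ for arbitrary $N \in \mathcal{D}(R[F])$, not just for discrete $N$. Take $N = X$ directly. Since $P$ is free over $R$, your $v : P \to H^0(X)$ is equivalently a map $g : P \to X$ in $\mathcal{D}(R)$, and the equalizer condition $v = F \circ \phi^*v \circ f$ on $\pi_0$ implies that $g$ lifts to a class in $\pi_0 \rhom_{R[F]}(P^{\mathrm{unit}}, X)$; the resulting map $P^{\mathrm{unit}} \to X$ in $\mathcal{D}(R[F])$ has the original $g$ as its restriction along $P \to P^{\mathrm{unit}}$, so it is surjective on $H^0$. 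This is exactly what the paper does, and it produces the required map in one step with no lifting argument at all.
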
 
\begin{proof} 
Since the $t$-structure on 
$\mathcal{D}_{\mathrm{unit}}(R[F])$ induced from
$\mathcal{D}(R[F])$ is evidently right-complete
and compatible with filtered colimits, 
we find that
$\mathcal{D}_{\mathrm{unit}}(R[F])^{\leq 0}$ is Grothendieck prestable. 

It is clear that 
$\mathcal{D}_{\mathrm{unit}}(R[F])^{\leq 0}$ is left separated, so it suffices to
show that it is 0-complicial in light of \cite[Prop.~C.5.4.5]{SAG}. 
Let $M \in 
\mathcal{D}_{\mathrm{unit}}(R[F])^{\leq 0}$. 
We can choose a free $R$-module $P$ together with a map $g: P \to M$ in
$\mathcal{D}(R)$ which induces a surjection on $H^0$. 
Moreover, since $g$ is surjective on $H^0$ and since $P$ is free, we can choose a map 
of $R$-modules $f: P \to \phi^* P$ such that the composite 
$P \stackrel{f}{\to} \phi^* P \stackrel{\phi^* g}{\to} \phi^* M
\stackrel{F}{\simeq} M$ agrees
with $g$. Using the universal property of the unitalization, 
\Cref{univpropertyunit}, we obtain a map 
in $\mathcal{D}(R[F])$ from the unitalization of $(P, f: P \to \phi^* P)$ to $M$
which necessarily induces a surjection on $H^0$. This proves 0-compliciality and
thus the result. 
\end{proof} 

\begin{remark} 
If $R$ is $F$-finite regular noetherian, 
it follows that $\mathcal{D}_{\mathrm{unit}}(R[F])$ is simply the derived
$\infty$-category of the ring of \Cref{unitasmodules}. 
\end{remark}

As a consequence, we obtain the following result on the category of unit
$R[F]$-modules; for regular noetherian $\mathbb{F}_p$-algebras satisfying
$F$-finiteness assumptions, this result is due to Ma \cite[Th.~1.3]{Ma14} (who
also proves the lower bound).

\begin{corollary} 
Let $R$ be an $\mathbb{F}_p$-algebra whose Frobenius is flat and which has global
dimension $d$ (e.g., a regular noetherian $\mathbb{F}_p$-algebra of Krull
dimension $d$). 
Then the abelian category of unit $R[F]$-modules has global dimension $\leq
d+1$. 
\end{corollary}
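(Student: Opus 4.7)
The plan is to leverage the identification from \Cref{unitisderivedofheart} of $\mathcal{D}_{\mathrm{unit}}(R[F])$ with the derived $\infty$-category of its heart. Since this identification is a fully faithful embedding into $\mathcal{D}(R[F])$, for any unit $R[F]$-modules $M, N$ the $\mathrm{Ext}$-groups computed in the abelian category of unit $R[F]$-modules agree with $\mathrm{Ext}^i_{R[F]}(M,N)$ computed in the ambient derived $\infty$-category of all $R[F]$-modules. I would therefore reduce to bounding the latter by $d+1$.

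The next step exploits the observation that every unit $R[F]$-module is canonically its own unitalization. Given a unit $R[F]$-module $M$, the structure map $F \colon \phi^* M \to M$ is by hypothesis an isomorphism, so $f := F^{-1} \colon M \to \phi^* M$ is well-defined, and then every transition map in the telescope of \Cref{unitalization} is an isomorphism, giving $M^{\mathrm{unit}} = M$. Feeding the pair $(M, F^{-1})$ into the universal property of the unitalization (\Cref{univpropertyunit}), for any $N \in \mathcal{D}(R[F])$ I obtain a fiber sequence in $\mathcal{D}(\mathbb{Z})$
\[
\mathrm{RHom}_{R[F]}(M, N) \longrightarrow \mathrm{RHom}_R(M, N) \longrightarrow \mathrm{RHom}_R(M, N),
\]
where the second map is the difference of the identity and the twist described in \Cref{univpropertyunit}.

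To conclude, I would use that $R$ has global dimension $d$, so both copies of $\mathrm{RHom}_R(M, N)$ are concentrated in cohomological degrees $\leq d$. The long exact sequence associated to the fiber sequence above then forces $\mathrm{Ext}^i_{R[F]}(M, N) = 0$ for $i > d+1$, which is exactly the desired bound on global dimension.

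There is essentially no obstacle here: the entire argument reduces to the observation that the fiber of a self-map of a complex of amplitude $[0,d]$ has amplitude $[0,d+1]$. The only conceptual input is the self-unitalization of unit modules, which makes \Cref{univpropertyunit} directly applicable and shifts the problem from $R[F]$-modules to $R$-modules at the cost of a single cohomological degree.
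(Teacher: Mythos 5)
Your proof is correct, and it takes a genuinely different (and arguably more self-contained) route than the paper. Both proofs begin with \Cref{unitisderivedofheart}, which reduces the problem to bounding $\mathrm{Ext}^i_{R[F]}(M,N)$ for unit $M,N$. From there the paper simply cites \cite[Rem.~3.1.8]{BLRH}, which asserts that the twisted polynomial ring $R[F]$ itself has global dimension $\leq d+1$ (for \emph{all} $R[F]$-modules, not just unit ones), and stops. You instead observe that a unit module is its own unitalization via $f = F^{-1}$ and then invoke the universal mapping property of unitalization (\Cref{univpropertyunit}) to exhibit $\mathrm{RHom}_{R[F]}(M,N)$ as the fiber of a self-map of $\mathrm{RHom}_R(M,N)$, which lives in degrees $[0,d]$ because $\mathrm{gl.dim}\,R = d$; the fiber then lives in degrees $\leq d+1$. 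The trade-off: the paper's citation proves a stronger statement (global dimension of all of $R[F]$) with no local work, while your argument is entirely internal to the tools developed in the section and, as a bonus, makes visible \emph{where} the extra degree comes from --- it is the length of the two-term unitalization presentation. Your argument is also restricted to unit $M$, which is all that is needed here. One small point worth making explicit if you write this up: the identification $M^{\mathrm{unit}} \simeq M$ for $(M,F^{-1})$ should be checked to be compatible with the unit structure map, i.e.\ the reflector onto unit modules acts as the identity on objects already unit; this is implicit in the adjunction formulation and in \cite[Prop.~11.2.5]{BLRH}, but is worth a sentence.
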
 
\begin{proof} 
In light of 
\cite[Rem.~3.1.8]{BLRH},  the abelian category of $R[F]$-modules has
global dimension $\leq d + 1$. 
Since $\mathrm{Ext}$-groups in unit $R[F]$-modules are computed in all
$R[F]$-modules thanks to \Cref{unitisderivedofheart}, the result follows. 
\end{proof} 

In the remainder of the subsection, we assume that $R$ is a regular noetherian
$\mathbb{F}_p$-algebra. 

\begin{definition}[{Finitely generated unit $R[F]$-modules}]
An $R[F]$-module $M$ is \emph{finitely generated unit} if it is unit and if it
is finitely generated as an $R[F]$-module. 
We let $\mathcal{D}^b_{\mathrm{fgu}}(R[F]) \subset \mathcal{D}(R[F])$ denote the
subcategory of objects which are $t$-bounded and whose cohomology $R[F]$-modules
are finitely generated unit. 
\end{definition}

\begin{proposition} 
\label{fguproperties}
\begin{enumerate}
\item (Cf.~\cite[Th.~6.1.3]{EmertonKisin} or \cite[Cor.~11.2.6, 11.2.12]{BLRH}) An $R[F]$-module $M$ is finitely generated unit if and only if there exists a
finitely generated 
$R$-module $N$ with a map $f: N \to \phi^* N$ such that $M$ arises as the
unitalization of $(N, f)$. 
\item (Cf.~\cite[Th.~2.8]{Lyubeznik}) The collection of finitely generated unit $R[F]$-modules is closed (inside
unit $R[F]$-modules) under subobjects, quotients, and extensions. 
\end{enumerate}
\end{proposition}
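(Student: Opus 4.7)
The plan is to handle part (1) first and then deduce part (2) from it, keeping the direct analogue of unitalization from \Cref{unitalization} as the central tool.

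For part (1), the implication $(\Leftarrow)$ is immediate: the unitalization $(N, f)^{\mathrm{unit}} = \varinjlim_i \phi^{*i} N$ is generated as an $R[F]$-module by the image of the structure map $N \to (N,f)^{\mathrm{unit}}$, since the $F$-iterates applied to this image exhaust the colimit. Hence finitely many $R$-generators of $N$ yield finitely many $R[F]$-generators of the unitalization. For $(\Rightarrow)$, given $M$ finitely generated unit with $R[F]$-generators $m_1, \ldots, m_k$, I would begin with the finitely generated $R$-submodule $N_0 = Rm_1 + \cdots + Rm_k \subset M$, which generates $M$ as an $R[F]$-module. Since $M$ is unit, $F^{-1}: M \to \phi^* M$ is defined, and $F^{-1}(N_0) \subset \phi^* M = \varinjlim_{N'} \phi^* N'$ over finitely generated $N' \subset M$; by noetherianity of $R$, there exists a finitely generated $N_1 \supset N_0$ with $F^{-1}(N_0) \subset \phi^* N_1$. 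Iterating and exploiting the finite $R[F]$-generation of $M$ together with noetherianity of $R$ (and of the twisted polynomial ring $R[F]$), one produces a single finitely generated $N \subset M$ with $F^{-1}(N) \subset \phi^* N$, i.e., an $R$-linear map $f: N \to \phi^* N$. By \Cref{univpropertyunit}, the inclusion $N \hookrightarrow M$ extends to a map $(N,f)^{\mathrm{unit}} \to M$; this map is surjective (its image contains the $m_i$ and is $F$-stable) and injective (each $\phi^{*i}N \hookrightarrow \phi^{*i} M$ is injective by flatness of $\phi$, and $F^i$ is an isomorphism on $\phi^{*i} M$).

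For part (2), closure under quotients and extensions is straightforward. A quotient of a finitely generated unit $R[F]$-module by a unit submodule is again finitely generated and unit, since both properties are inherited by quotients in the ambient category of $R[F]$-modules. For an extension $0 \to M' \to M \to M'' \to 0$ of unit modules, stability of the unit property under extensions gives that $M$ is unit, and lifting $R[F]$-generators of $M''$ to $M$ and combining with those of $M'$ provides finite generation of $M$ over $R[F]$. The substantive point is closure under subobjects. Given a unit submodule $M' \subset M$ with $M$ finitely generated unit, I would invoke part (1) to write $M = (N, f)^{\mathrm{unit}}$ with $N \subset M$ finitely generated and $f = F^{-1}|_N$ as constructed above. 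Set $N' = N \cap M'$, which is finitely generated by noetherianity of $R$. Since $M'$ is unit and $F$-stable, $F^{-1}(M') \subset \phi^* M'$; combined with $F^{-1}(N) \subset \phi^* N$ and the identity $\phi^* N \cap \phi^* M' = \phi^*(N \cap M')$ (valid because $\phi^*$ is flat), we conclude $F^{-1}(N') \subset \phi^* N'$, so $f$ restricts to $f': N' \to \phi^* N'$. The unitalization of $(N', f')$ embeds in $(N,f)^{\mathrm{unit}} = M$ and is readily identified with $M'$, so $M'$ is finitely generated unit by part (1).

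The main obstacle lies entirely in part (1), $(\Rightarrow)$: producing a single finitely generated $R$-submodule $N \subset M$ with $F^{-1}(N) \subset \phi^* N$. A naive ascending chain $N_0 \subset N_1 \subset \cdots$ with $F^{-1}(N_i) \subset \phi^* N_{i+1}$ converges to a submodule with the desired closure property but is a priori not finitely generated, so one has to use the finite $R[F]$-generation of $M$ together with noetherianity of $R$ and of $R[F]$ to arrange a termination of the process, along the lines of the constructions in \cite[Th.~6.1.3]{EmertonKisin} and \cite[Cor.~11.2.6]{BLRH}.
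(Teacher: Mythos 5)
The paper itself does not prove this proposition; it cites Emerton--Kisin, Bhatt--Lurie, and Lyubeznik for both parts, so your proposal cannot be compared against a paper-internal argument. Evaluating it on its own merits: the $(\Leftarrow)$ direction of (1), the treatment of quotients and extensions in (2), and the reduction of closure under subobjects to part (1) via $N' = N \cap M'$ are all sound (for the last point, one does have to check that the injection $N'^{\mathrm{unit}} \to M'$ is surjective, but $\theta^i(\phi^{*i}(N\cap M')) = \theta^i(\phi^{*i}N) \cap \theta^i(\phi^{*i}M') = \theta^i(\phi^{*i}N) \cap M'$ by flatness of $\phi$ and unitality of $M'$, and taking the union over $i$ gives $M'$).

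The genuine gap is exactly where you flag it: the termination step in (1), $(\Rightarrow)$. As stated, your ascending chain $N_0 \subset N_1 \subset \cdots$ with $F^{-1}(N_i)\subset \phi^* N_{i+1}$ has no reason to stabilize --- noetherianity of $R$ controls chains inside a fixed finitely generated module, not an unbounded chain of f.g.\ submodules of $M$, and ``noetherianity of $R[F]$'' is not the right tool (it is not obviously true and is not what the cited sources use). The missing idea is that one should not try to stabilize an arbitrary enlargement, but instead take the canonical chain $E_0 = N_0$, $E_{l+1} = E_l + \theta(\phi^* E_l)$ where $\theta = F : \phi^* M \to M$. Then $\bigcup_l \theta(\phi^* E_l) = M$ (as $\bigcup E_l = M$ by $R[F]$-generation and $\theta \circ \phi^*$ commutes with filtered colimits), so finite generation of $E_0$ alone forces $E_0 \subset \theta(\phi^* E_{l_0})$ for some finite $l_0$; at that point $N := E_{l_0+1} = \theta(\phi^* E_{l_0})$ is finitely generated, contains $E_{l_0} \supset N_0$, and satisfies $\theta^{-1}(N) = \phi^* E_{l_0} \subset \phi^* N$, i.e.\ it is a root. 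You should replace the appeal to noetherianity of $R[F]$ with this one-step observation; once made, the rest of your argument (that the induced map $N^{\mathrm{unit}} \to M$ is an isomorphism) goes through as you describe.
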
 

It follows that 
$\mathcal{D}^b_{\mathrm{fgu}}(R[F]) \subset \mathcal{D}(R[F])$ is a thick
subcategory. 
\begin{corollary}[{Cf.~\cite[Lem.~11.3.12]{BLRH}}] 
\label{fgumeansfproj}
Any object of $\mathcal{D}^b_{\mathrm{fgu}}(R[F])$ has finite projective dimension
over $R[F]$. 
\end{corollary}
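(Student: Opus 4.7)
The plan is to reduce to a single finitely generated unit $R[F]$-module via a d\'evissage, and then to exhibit an explicit two-term presentation of that module by $R[F]$-modules of finite projective dimension.

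First, since $\mathcal{D}_{\mathrm{fgu}}(R[F])$ consists of $t$-bounded objects with finitely generated unit cohomologies, an induction on cohomological amplitude via the fiber sequences $H^i(M)[-i] \to \tau^{\leq i} M \to \tau^{\leq i-1} M$ (together with the closure of the class of objects of finite projective dimension under cofibers and shifts) reduces the problem to showing that a single finitely generated unit $R[F]$-module has finite projective dimension over $R[F]$.

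Second, by part (1) of the preceding proposition, such a module can be written as $M = N^{\mathrm{unit}}$ for some finitely generated $R$-module $N$ equipped with a map $f: N \to \phi^* N$. The universal mapping property in \Cref{univpropertyunit} identifies $\rhom_{R[F]}(M, X)$ with the equalizer (i.e., fiber of the difference) of two maps $\rhom_R(N, X) \rightrightarrows \rhom_R(N, X)$. Applying the extension-of-scalars adjunction $\rhom_R(N, X) \simeq \rhom_{R[F]}(R[F] \otimes_R N, X)$ and invoking Yoneda then produces a cofiber sequence in $\mathcal{D}(R[F])$ of the form
\[ R[F] \otimes_R N \longrightarrow R[F] \otimes_R N \longrightarrow M. \]

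Third, $R[F]$ is free as a right $R$-module on the basis $\{F^i\}_{i \geq 0}$, so the functor $R[F] \otimes_R (-)$ is exact and carries projective $R$-modules to projective $R[F]$-modules. Because $R$ is regular noetherian of finite global dimension $d$, the finitely generated module $N$ admits a finite projective resolution of length $\leq d$ over $R$; hence $R[F] \otimes_R N$ has projective dimension $\leq d$ over $R[F]$, and the cofiber sequence yields $\mathrm{pd}_{R[F]}(M) \leq d+1$, as desired.

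The main technical point is extracting the cofiber sequence from \Cref{univpropertyunit}; this is essentially a Yoneda argument, but one must track the second arrow carefully, recalling that its effect on $\rhom_{R[F]}(-, X)$ realizes the composite $v \mapsto F_X \circ \phi^* v \circ f$ appearing in \Cref{univpropertyunit}. A secondary subtlety is that for regular noetherian $R$ of infinite Krull dimension, finitely generated $R$-modules need not have globally bounded projective dimension; this can be addressed by observing that it suffices to bound $\mathrm{pd}_R(N)$ for the specific root $N$ underlying the given $M$, and by shrinking $N$ if necessary. The remaining projective-dimension bookkeeping is routine.
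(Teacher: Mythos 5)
The paper cites \cite[Lem.~11.3.12]{BLRH} for this corollary without reproducing the argument, so there is no in-text proof to compare against, but your reconstruction is sound and likely close to what is in the reference: the d\'evissage to a single finitely generated unit $R[F]$-module, the two-term presentation obtained by Yoneda from the universal property in \Cref{univpropertyunit}, and the passage through extension of scalars give $\mathrm{pd}_{R[F]}(N^{\mathrm{unit}}) \leq \mathrm{pd}_R(N) + 1$. Your caveat about infinite Krull dimension is correctly placed, and the fix you gesture at does work: for a fixed finitely generated module over a regular noetherian ring, the sets of primes at which the $n$th syzygy fails to be locally free form a decreasing chain of closed subsets of $\spec R$ with empty intersection, so some such set is empty and the projective dimension is finite with no uniform bound required.

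One claim in your third step is incorrect as stated: $R[F]$ is \emph{not} free as a right $R$-module on the basis $\{F^i\}_{i \geq 0}$ unless $R$ is perfect. For instance with $R = \mathbb{F}_p[x]$, the element $xF$ lies in the degree-one piece $R F$ of $R[F]$ but not in $F R = \phi(R) F = \mathbb{F}_p[x^p] F$, so $\bigoplus_i F^i R = \bigoplus_i \phi^i(R) F^i$ is a proper right $R$-submodule of $R[F]$. What is true, and what your argument actually needs, is that $R[F]$ is \emph{flat} as a right $R$-module precisely when $\phi \colon R \to R$ is flat, which holds here by Kunz's theorem; this is exactly \cite[Rem.~3.1.8]{BLRH}, which the paper already invokes in this section. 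Combined with the automatic fact that extension of scalars along any ring map carries projective modules to projective modules, flatness is all that is required and the remaining bookkeeping goes through unchanged.
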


\subsection{The Emerton--Kisin correspondence}
Let $R$ be an $\mathbb{F}_p$-algebra. 
We consider the solutions functor, as in \cite{EmertonKisin}; however, we
do so on the \emph{big} \'etale site rather than the small \'etale site. 
The basic observation is that doing so leads to full faithfulness on a larger
subcategory (\Cref{fullfaithfulnessofSol}). 

\begin{definition}[The big \'etale site] 
\label{bigetale}
We let $\sch_R$ denote the site of all qcqs $R$-schemes, equipped 
with the \'etale topology, and let $\mathcal{D}( \sch_R, \mathbb{F}_p)$ denote
the derived $\infty$-category of sheaves of $\mathbb{F}_p$-vector spaces on
$\sch_R$. 

Let $(\lambda^*, \lambda_*)$ denote the pullback and pushforward from the small \'etale site to the big
\'etale site; note that $\lambda^*$ is fully faithful, and the essential image
of $\lambda^*$ 
consists of those objects of $\mathcal{D}( \sch_R, \mathbb{F}_p)$ such that on
$R$-algebras, they commute with filtered colimits and
carry local homomorphisms of strictly henselian local $R$-algebras to
equivalences
(analogously to \Cref{restrictedadjunction} and \Cref{imageofpi}).

\end{definition} 
\begin{construction}[The solutions functor] 
We have a functor
\[ \mathrm{Sol} = \mathrm{RHom}_{R[F]}(-, \mathbb{G}_a): \mathcal{D}(R[F])^{op} \to \mathcal{D}( \sch_R, \mathbb{F}_p)
\]
which carries $M \in \mathcal{D}( R[F])$ to the functor 
that sends an $R$-algebra $R'$ to $\mathrm{RHom}_{R[F]}( M, R') \in
\mathcal{D}(\mathbb{F}_p)$.  
This defines a hypercomplete \'etale sheaf with values in $\mathcal{D}(\mathbb{F}_p)$ on
the category of $R$-algebras (since $R' \mapsto R'$ does), whence an object of 
$\mathcal{D}( \sch_R, \mathbb{F}_p)$. 
\end{construction} 

\begin{remark}[Compatibility of $\mathrm{Sol} $ and base-change]
The solutions functor $\mathrm{Sol}$ carries a free $R[F]$-module $\bigoplus_I
R[F]$ to the direct product $\prod_I \mathbb{G}_a$. 
Using this, one concludes that $\mathrm{Sol}$ is compatible with base-change
\emph{on
the subcategory of objects with bounded projective amplitude}: if 
$M \in \mathcal{D}(R[F])$ has bounded projective amplitude, and
if
$f: \spec(R') \to \spec(R)$ is any map, then $f^* ( \mathrm{Sol}(M)) \simeq
\mathrm{Sol}(R' \otimes^{\mathbb{L}}_R M)$ for $f^*$ the pullback on sheaves on
the big \'etale site; indeed, this follows because $f^*$ carries the
representable $\prod_I \mathbb{G}_a$ to $\prod_I \mathbb{G}_a$. 
\label{solandbasechange}
\end{remark} 
In the following, we let 
$\mathcal{D}_{\mathrm{unit}}^{\mathrm{fproj}}(R[F]) \subset \mathcal{D}(R[F])$
denoted the subcategory spanned by objects which are unit and of bounded
projective amplitude. 
In the case when $R$ has finite global dimension, this is equivalent to simply
being $t$-bounded, thanks to 
\cite[Rem.~3.1.8]{BLRH}.

\begin{theorem} 
\label{fullfaithfulnessofSol}
Suppose $R$ is any $\mathbb{F}_p$-algebra whose Frobenius is flat. 
The restriction of $\mathrm{Sol}$ induces a fully faithful embedding
$\mathcal{D}^{\mathrm{fproj}}_{\mathrm{unit}}(R[F])^{op} \to \mathcal{D}(\sch_R, \mathbb{F}_p)$
with left inverse given by $\mathrm{RHom}_{\mathcal{D}(\sch_R,
\mathbb{F}_p)}(-, \mathbb{G}_a)$. 
More generally, for any $M \in
\mathcal{D}^{\mathrm{fproj}}_{\mathrm{unit}}(R[F]), N \in
\mathcal{D}(R[F])$,  the natural map
\begin{equation}  \rhom_{R[F]}(N, M) \to  \rhom_{\mathcal{D}( \sch_R, \mathbb{F}_p)}(
\mathrm{Sol}(M), \mathrm{Sol}(N)) \label{compmapsol} \end{equation}
is an equivalence. 
\end{theorem}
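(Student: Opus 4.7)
\emph{Paragraph 1 (Reduction to biduality).} For fixed $M$, both sides of~\eqref{compmapsol} are exact, contravariant functors of $N \in \mathcal{D}(R[F])$ that carry colimits in $N$ to limits: on the left this is automatic for $\rhom$, while on the right one uses that $\mathrm{Sol}$ is contravariant and converts colimits to limits, and then that $\rhom_{\mathcal{D}(\sch_R, \mathbb{F}_p)}(\mathrm{Sol}(M), -)$ preserves limits. Since $R[F]$ generates $\mathcal{D}(R[F])$ as a localizing subcategory, it suffices to check~\eqref{compmapsol} at $N = R[F]$. Using $\mathrm{Sol}(R[F]) = \mathbb{G}_a$, this reduces the whole theorem to showing that the biduality map
\begin{equation*}
\alpha_M \colon M \to \rhom_{\mathcal{D}(\sch_R, \mathbb{F}_p)}(\mathrm{Sol}(M), \mathbb{G}_a)
\end{equation*}
is an equivalence for every $M \in \mathcal{D}^{\mathrm{fproj}}_{\mathrm{unit}}(R[F])$.

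\emph{Paragraph 2 (Core computation via Kato--Breen).} The base case $M = R[F]$ amounts to $R[F] \simeq \rhom_{\mathcal{D}(\sch_R, \mathbb{F}_p)}(\mathbb{G}_a, \mathbb{G}_a)$, which is the extension by Kato \cite[Prop.~2.1]{Kato1} of Breen's vanishing theorem \cite{Breen} to the big \'etale site. From this one deduces $\alpha_M$ for all $M$ of the form $M_0^{\mathrm{unit}}$ with $M_0$ a finite rank free $R$-module: by~\Cref{univpropertyunit} one has a presentation
\begin{equation*}
M_0^{\mathrm{unit}} \;\simeq\; \mathrm{cofib}\bigl( R[F] \otimes_R M_0 \xrightarrow{\,\mathrm{id} - d_f\,} R[F] \otimes_R M_0 \bigr)
\end{equation*}
encoding the given structure map $f \colon M_0 \to \phi^* M_0$. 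Applying $\mathrm{Sol}$ yields a fiber sequence $\mathrm{Sol}(M) \to \mathbb{G}_a^n \to \mathbb{G}_a^n$, and applying $\rhom_{\mathcal{D}(\sch_R, \mathbb{F}_p)}(-, \mathbb{G}_a)$ then produces a cofiber of two copies of $R[F]^n$ by the Kato--Breen identification; naturality then shows that the resulting cofiber recovers $M$.

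\emph{Paragraph 3 (Devissage).} The class of $M$ on which $\alpha_M$ is an equivalence is closed under shifts, retracts, and fiber/cofiber sequences. Starting from the unitalizations of Paragraph~2, I would use the $0$-complicial presentation of $\mathcal{D}_{\mathrm{unit}}(R[F])^{\leq 0}$ established in the proof of~\Cref{unitisderivedofheart} --- every connective unit object is covered by a unitalization of a free $R$-module --- to iteratively resolve an arbitrary $M \in \mathcal{D}^{\mathrm{fproj}}_{\mathrm{unit}}(R[F])$ by such unitalizations. The bounded projective amplitude hypothesis is then used to truncate this resolution at a finite stage, after which closure under finite cofibers yields the conclusion.

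\textbf{Main obstacle.} The most delicate step is the devissage in Paragraph~3: the functor $\rhom_{\mathcal{D}(\sch_R, \mathbb{F}_p)}(-, \mathbb{G}_a)$ does \emph{not} convert infinite products (which arise from $\mathrm{Sol}$ applied to infinite direct sums) back into infinite direct sums, so one cannot simply check $\alpha_M$ on the compact generators of $\mathcal{D}_{\mathrm{unit}}(R[F])^{\leq 0}$. Thus the bounded projective amplitude hypothesis is essential rather than cosmetic: one must cut off the resolution before this failure is encountered. A related technical input is Kato's precise generalization of Breen's theorem, which must be formulated so as to cover the sheaves $\mathrm{Sol}(R[F] \otimes_R M_0)$ that arise even when $M_0$ is not of finite rank.
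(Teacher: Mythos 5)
Your Paragraph 1 is essentially the paper's first reduction, and the awareness in your ``Main obstacle'' paragraph that infinite products are the crux is correct. However, Paragraph 2 contains a genuine error that derails the argument: the asserted identification
\[
R[F] \;\simeq\; \rhom_{\mathcal{D}(\sch_R, \mathbb{F}_p)}(\mathbb{G}_a, \mathbb{G}_a)
\]
on the big \'etale site is \emph{false}, even for $R = \mathbb{F}_p$. Breen's earlier work \cite{Breen78} computes precisely these $\mathrm{Ext}$ groups on the big (fppf or \'etale) site and finds nonvanishing higher $\mathrm{Ext}$'s; the appendix of this paper even remarks that the $v$-descent argument ``does not recover'' that more general computation. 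The vanishing theorem of \cite{Breen} that the paper actually uses (\Cref{Breenthmappendix}) lives on the \emph{perfect} site, or equivalently (\Cref{variantBreenthm}) on the big site but with the source $\mathbb{G}_a$ replaced by its inverse perfection $\varprojlim_\phi \mathbb{G}_a$. Kato's \cite[Prop.~2.1]{Kato1} is likewise an extension to the \emph{relatively perfect} site, not the big \'etale site, so it does not supply what you want. Once this identification fails, the cofiber computation in Paragraph 2 no longer recovers $M_0^{\mathrm{unit}}$, and Paragraph 3 has nothing to d\'evissage from.

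The missing ingredient is a faithfully flat base-change reduction to the perfect case, which is where the \emph{unit} hypothesis does its work. The paper's proof: (i) uses that $M$ has bounded projective amplitude to place $\mathrm{Sol}(M)$ in the thick subcategory of $\mathcal{D}(\sch_R,\mathbb{F}_p)$ generated by $\prod_I \mathbb{G}_a$; (ii) shows (via Breen--Deligne resolutions, or pseudo-coherence plus Lazard's theorem) that the biduality $\rhom_{\mathcal{D}(\sch_R,\mathbb{F}_p)}(\mathrm{Sol}(M),\mathbb{G}_a)$ base changes along the faithfully flat map $R \to R_{\mathrm{perf}}$ to $\rhom_{\mathcal{D}(\sch_{R_{\mathrm{perf}}},\mathbb{F}_p)}(\mathrm{Sol}(R_{\mathrm{perf}}\otimes_R M),\mathbb{G}_a)$; and (iii) over $R_{\mathrm{perf}}$, the unit condition makes $R_{\mathrm{perf}}\otimes_R M$ a module over $R_{\mathrm{perf}}[F^{\pm1}]$, so its $\mathrm{Sol}$ is built from $\varprojlim_\phi\mathbb{G}_a$ (not $\mathbb{G}_a$) and \Cref{variantBreenthm}---which is already formulated for arbitrary infinite products, answering your ``related technical input'' worry---applies directly. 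Faithfully flat descent then promotes the equivalence over $R_{\mathrm{perf}}$ to one over $R$. Without this perfection step there is no clean computation at the bottom of your induction, and no amount of d\'evissage will fix it.
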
 
\begin{proof} 
It suffices to treat the case where $N = R[F]$ itself, 
so that 
$\mathrm{Sol}(N) = \mathbb{G}_a$, since both sides carry colimits in $N$ to
limits. 

We let $R_{\mathrm{perf}}$ be the perfection of $R$, so that $R \to
R_{\mathrm{perf}}$ is
faithfully flat by our assumptions. 
Then $R_{\mathrm{perf}}\otimes_R M = R_{\mathrm{perf}}[F] \otimes_{R[F]} M \in
\mathcal{D}(R_{\mathrm{perf}}[F])$; since $M$ was assumed unit, it follows that 
$F$ acts invertibly on
$R_{\mathrm{perf}}\otimes_R M $, i.e., it 
belongs to the image of the fully faithful restriction functor
$\mathcal{D}(R_{\mathrm{perf}}[F^{\pm 1}]) \to
\mathcal{D}(R_{\mathrm{perf}}[F])$. 
The strategy is now to reduce the statement over $R$ to a statement over
$R_{\mathrm{perf}}$. 

Note that $\mathrm{Sol}(M)$ belongs to the thick subcategory of 
$\mathcal{D}(\sch_R, \mathbb{F}_p)$ generated by products $\prod_I
\mathbb{G}_a$; in fact, this follows because $M$ has finite projective
dimension as an $R[F]$-module by assumption. 
Using the Breen--Deligne resolution, one sees that 
the map 
$$\rhom_{\mathcal{D}(\sch_R, \mathbb{F}_p)}( \mathrm{Sol}(M), \mathbb{G}_a) 
\to \rhom_{\mathcal{D}( \sch_{R_{\mathrm{perf}}}, \mathbb{F}_p)}( \mathrm{Sol}(
R_{\mathrm{perf}}
\otimes_R M), \mathbb{G}_a)
$$
exhibits the target 
as the extension of scalars of the source from $R$ to $R_{\mathrm{perf}}$ (cf.~also
\Cref{solandbasechange}); note here that we regard the source as living in
$\mathcal{D}(R[F])$ and the target in $\mathcal{D}(R_{\mathrm{perf}}[F])$, because of
the presence of the $\mathbb{G}_a$'s.
In fact, by a thick subcategory argument, this reduces to the claim that for any set $I$, the map 
\begin{equation} \label{tensormap2} \rhom_{\mathcal{D}(\sch_R, \mathbb{F}_p)}( \prod_I \mathbb{G}_a,
\mathbb{G}_a) 
\to 
\rhom_{\mathcal{D}(\sch_{R_{\mathrm{perf}}}, \mathbb{F}_p)}( \prod_I \mathbb{G}_a,
\mathbb{G}_a) 
\end{equation}
exhibits the target as the extension of scalars of the source from $R$ to
$R_{\mathrm{perf}}$; this in turn follows from the Breen--Deligne resolution for $\prod_I
\mathbb{G}_a$.  Alternatively, one observes that the pushforward of
$\mathbb{G}_a$ from $\sch_{R_{\mathrm{perf}}}$ to $\sch_R$ is $\mathbb{G}_a
\otimes_R R_{\mathrm{perf}}$; 
since  $\prod_I
\mathbb{G}_a$ is pseudo-coherent and $R_{\mathrm{perf}}$ is a filtered colimit of
finitely generated free $R$-modules by Lazard's theorem, the claim about \eqref{tensormap2} follows.  

Thus, in order to prove that 
\eqref{compmapsol} is an equivalence, 
it suffices to show that the map 
$$ R_{\mathrm{perf}}\otimes_R M \to \rhom_{\mathcal{D}(
\sch_{R_{\mathrm{perf}}}, \mathbb{F}_p)}( \mathrm{Sol}(R_{\mathrm{perf}}
\otimes_R M), \mathbb{G}_a) $$ is an equivalence. 
Since $R_{\mathrm{perf}}\otimes_R M \in \mathcal{D}(R_{\mathrm{perf}}[F^{\pm 1}])$ has bounded
projective amplitude (because it has bounded projective amplitude over
$R_{\mathrm{perf}}[F]$ by assumption on $M$), the claim 
follows from the 
results of \cite{Breen} (see \Cref{variantBreenthm}). 
\end{proof} 

Before stating and proving \Cref{EKthm2} below, we need some lemmas about
\'etale sheaves and $p$-linear algebra. 
\begin{lemma} 
\label{fieldrigidity}
Let $k$ be a separably closed field of characteristic $p$. Let $V$ be a
$k[F]$-module which is finite-dimensional as a $k$-vector space. 
Then for any extension $k \subset k'$ of separably closed fields, the map 
\[ \mathrm{fib}( F-1: V \to V ) \to \mathrm{fib}(F- 1 : k' \otimes_k V \to k'
\otimes_k V)\]
is an equivalence. 
\end{lemma}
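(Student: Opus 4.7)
The plan is to decompose $V$ into an étale part (where $F$ is bijective) and a nilpotent part (where $F$ is nilpotent), and to handle both pieces separately.

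First, since $V$ is finite-dimensional over $k$ and $F$ is Frobenius-semilinear, the chains $\ker(F^n) \subset V$ and $\mathrm{Im}(F^n) \subset V$ are $k$-subspaces (as $F^n(av) = a^{p^n} F^n(v)$) and stabilize for $n \gg 0$ to $k$-subspaces $V_{\mathrm{nil}}$ and $V_{\mathrm{\acute{e}t}}$ respectively. The standard Fitting argument gives a direct sum decomposition $V = V_{\mathrm{\acute{e}t}} \oplus V_{\mathrm{nil}}$ of $k[F]$-modules, with $F$ bijective on $V_{\mathrm{\acute{e}t}}$ and nilpotent on $V_{\mathrm{nil}}$. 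Because this decomposition is defined by kernels and images of iterates of $F$, and base change $k \to k'$ is flat, the decomposition is preserved: $k' \otimes_k V_{\mathrm{\acute{e}t}}$ and $k' \otimes_k V_{\mathrm{nil}}$ are the étale and nilpotent parts of $k' \otimes_k V$.

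On the nilpotent summand, $F-1$ is invertible (with inverse $-\sum_{i\geq 0} F^i$, which is a finite sum), so it contributes nothing to either the kernel or the cokernel of $F-1$, before or after base change. On the étale summand, I invoke Lang's theorem: since $k$ is separably closed and $V_{\mathrm{\acute{e}t}}$ is a finite-dimensional $k$-vector space on which $F$ acts bijectively and semilinearly, there is a $k$-basis of $F$-fixed vectors. Equivalently, $V_{\mathrm{\acute{e}t}} \simeq k \otimes_{\mathbb{F}_p} W$ for some finite-dimensional $\mathbb{F}_p$-vector space $W$ with trivial $F$-action. Then $\ker(F-1) = W$ and $\mathrm{coker}(F-1) = 0$, the latter because $k$ being separably closed makes every Artin--Schreier polynomial $x^p - x = c$ split. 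After base changing to $k'$, we obtain $k' \otimes_k V_{\mathrm{\acute{e}t}} \simeq k' \otimes_{\mathbb{F}_p} W$, and by the same Artin--Schreier argument (using now that $k'$ is separably closed) we again get $\ker(F-1) = W$ and $\mathrm{coker}(F-1) = 0$.

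Combining the two summands, $\mathrm{fib}(F-1)$ is concentrated in degree $0$ and canonically equals $W$ on both sides, so the comparison map is an equivalence. The only nontrivial input is Lang's theorem on the étale summand; the rest is formal.
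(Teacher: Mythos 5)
Your strategy runs into trouble precisely because $k$ is only assumed \emph{separably} closed, not perfect. The parenthetical justification that $\mathrm{Im}(F^n)$ is a $k$-subspace — because $F^n(av)=a^{p^n}F^n(v)$ — only shows that $\mathrm{Im}(F^n)$ is stable under scalars from $k^{p^n}$; when $k$ is imperfect this need not be a $k$-subspace, and the semilinear Fitting decomposition itself can fail. Concretely, take $t\in k\setminus k^p$ and let $V=ke_1\oplus ke_2$ with $F(e_1)=e_1$, $F(e_2)=te_1$. Then $F$ is injective (a relation $a^p+b^pt=0$ with $b\neq 0$ would force $t\in k^p$), so $V_{\mathrm{nil}}=0$; yet $F(V)\subset ke_1$, so $F$ is nowhere near bijective on $V$ and there is no ``\'etale'' complement. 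Indeed $ke_1$ is the unique proper nonzero $F$-stable $k$-subspace and it admits no $F$-stable complement, so $V$ is a non-split extension of a nilpotent $k[F]$-module by a unit one. Both the asserted decomposition $V=V_{\mathrm{\acute et}}\oplus V_{\mathrm{nil}}$ and the claim that $F$ is bijective on $V_{\mathrm{\acute et}}$ are therefore false in the stated generality. This matters: the lemma is invoked in \Cref{rigidity2} for residue fields of strictly henselian local $R$-algebras, which are separably closed but typically imperfect.

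The lemma itself is fine (in the example, $\ker(F-1)=\mathbb{F}_p e_1$ and $\mathrm{coker}(F-1)=0$ over every separably closed extension), and the paper avoids the issue by citing [SGA7, Exp.~XXII]: Prop.~1.2 gives surjectivity of $F-1$ and Cor.~1.1.10 handles the kernel, with arguments that do not pass through a Fitting decomposition. A self-contained repair in the spirit of your sketch is to argue scheme-theoretically: pick a basis so that $F-1$ becomes the map $\mathbb{A}^n_k\to\mathbb{A}^n_k$, $x\mapsto Ax^{(p)}-x$ (with $A$ not necessarily invertible). Its Jacobian is $-I$, so the map is \'etale; hence every fiber is a finite \'etale $k$-scheme, i.e.\ a disjoint union of copies of $\Spec k$ because $k$ is separably closed, and both $\ker(F-1)$ and $\mathrm{coker}(F-1)$ are then manifestly unchanged by passage to $k'$. (Nonemptiness of the fibers, i.e.\ surjectivity of $F-1$, can be checked after base change to $\bar k$, where your Fitting/Lang argument \emph{is} valid since $\bar k$ is perfect.) This is essentially the Jacobian argument the paper runs one level up in \Cref{rigidity2}.
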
 
\begin{proof} 
By \cite[Exp.~XXII, Prop.~1.2]{SGA7}, the operator $F-1$  acts surjectively on $V$ and
on $k' \otimes_k V$. 
Thus, it suffices to prove the 
assertion on $H^0$, which follows from \cite[Exp.~XXII, Cor.~1.1.10]{SGA7} (noting also
\cite[Exp.~XXII, Eq.~(1.0.10)]{SGA7}). 
\end{proof} 
\begin{lemma} 
\label{rigidity2}
Let $M$ be an $n$-by-$n$ matrix with coefficients in the
$\mathbb{F}_p$-algebra $R$. 
The construction which carries an $R$-algebra $R'$ to the mapping fiber of 
$R'^n \stackrel{1 - M \phi}{\to} R'^n$ 
carries local homomorphisms of strictly henselian local $R$-algebras to
equivalences. 
\end{lemma}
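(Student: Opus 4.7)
The plan is to reduce \Cref{rigidity2} to \Cref{fieldrigidity} by factoring through residue fields. Write $\mathcal{F}(R') := \mathrm{fib}(1 - M\phi \colon R'^n \to R'^n) \in \mathcal{D}(\mathbb{F}_p)$ for the functor in question. Given a local homomorphism $R_1 \to R_2$ of strictly henselian local $R$-algebras with (separably closed) residue fields $k_1 \subseteq k_2$, I will establish two things: (a) for each strictly henselian local $R'$ with residue field $k$, the reduction map $\mathcal{F}(R') \to \mathcal{F}(k)$ is an equivalence; and (b) $\mathcal{F}(k_1) \to \mathcal{F}(k_2)$ is an equivalence. Applying (a) to $R_1$ and $R_2$, the conclusion then follows from the obvious commutative square together with (b).

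For (b), I equip $V_i := k_i^n$ with the structure of a $k_i[F]$-module by letting $F$ act as $M\phi$, i.e., $x \mapsto M x^{[p]}$. This is manifestly Frobenius-semilinear, and $V_2$ is naturally identified with $k_2 \otimes_{k_1} V_1$ as $k_2[F]$-modules. Since each $V_i$ is finite-dimensional over $k_i$, \Cref{fieldrigidity} applies (the sign discrepancy between $F - 1$ and $1 - F$ being harmless after multiplication by $-1$) and produces the desired equivalence.

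For (a), fix a strictly henselian local $R$-algebra $R'$ with maximal ideal $\mathfrak{m}$ and residue field $k$. Since $\phi$ preserves $\mathfrak{m}$, the short exact sequence $0 \to \mathfrak{m}^n \to R'^n \to k^n \to 0$ is preserved by the operator $1 - M\phi$, and passage to mapping fibers produces a fiber sequence
\[
\mathrm{fib}\bigl(1 - M\phi \colon \mathfrak{m}^n \to \mathfrak{m}^n\bigr) \to \mathcal{F}(R') \to \mathcal{F}(k)
\]
in $\mathcal{D}(\mathbb{F}_p)$. It therefore suffices to show that $1 - M\phi$ acts as an isomorphism of abelian groups on $\mathfrak{m}^n$. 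The crucial observation is that the polynomial map $g(x) := x - M x^{[p]} - b$ has Jacobian matrix equal to the identity everywhere, since $\tfrac{d}{dx}(x^p) = 0$ in characteristic $p$. Given any $b \in \mathfrak{m}^n$, the multivariable Hensel's lemma for the henselian pair $(R', \mathfrak{m})$, applied with approximate root $\bar{x} = 0$ (noting $g(0) = -b \in \mathfrak{m}^n$), produces a unique $x \in \mathfrak{m}^n$ with $g(x) = 0$; this yields both surjectivity and, by the uniqueness clause specialized to $b = 0$, injectivity of $1 - M\phi$ on $\mathfrak{m}^n$.

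The only genuine input is the Hensel's lemma step, and this becomes essentially immediate once one observes that the Jacobian is the identity matrix in characteristic $p$; the remainder is formal manipulation of fiber sequences and an application of the previously established \Cref{fieldrigidity}.
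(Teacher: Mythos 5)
Your proof is correct and follows essentially the same route as the paper: reduce to the case of a strictly henselian local ring mapping to its residue field (the field case having been handled by \Cref{fieldrigidity}), then observe that since the Jacobian of $x \mapsto x - Mx^{[p]}$ is the identity matrix in characteristic $p$, Hensel's lemma (equivalently, the unique lifting property for the \'etale map $\mathbb{A}^n_R \to \mathbb{A}^n_R$) shows $1 - M\phi$ is an isomorphism on $\mathfrak{m}^n$. The paper phrases the last step in terms of the \'etale lifting diagram rather than multivariable Hensel's lemma, but these are the same tool, and the rest of your argument matches.
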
 
\begin{proof} 
We have already seen that the conclusion of the lemma holds for inclusions of
separably closed fields (\Cref{fieldrigidity}). Thus, it suffices to treat the
case of the map from a strictly henselian local $R$-algebra $R'$ to 
its residue field. 

There is a map $h: \mathbb{A}^n_R \to \mathbb{A}^n_R$
such that on $R'$-points, it is given by the map  
$R'^n \stackrel{1 - M \phi}{\to} R'^n$; this map is \'etale by the Jacobian
criterion \cite[Tag 02GU]{stacks-project}.  
Now by the lifting property for \'etaleness, any diagram
\[ 
\xymatrix{
\spec(R'/\mathfrak{m}) \ar[d]  \ar[r] &  \mathbb{A}^n_R \ar[d]^{h}  \\
\spec(R') \ar[r] 
\ar@{-->}[ru] 
&  \mathbb{A}^n_R
}   \]
admits a unique 
dotted arrow
that makes the extended diagram commute \cite[Tag 08HQ]{stacks-project}. 
Using this, the lemma follows: in fact, for $\mathfrak{m} \subset R'$ the
maximal ideal, we find that the map 
$\mathfrak{m}^n \stackrel{1 - M\phi }{\to} \mathfrak{m}^n$ is an isomorphism. 
\end{proof}

\begin{lemma} 
\label{thicksubcatgen}
Let 
$X$ be any qcqs scheme. 
Then $\mathcal{D}^b_{\mathrm{cons}}( X_{\mathrm{et}}, \mathbb{F}_p)$ 
is generated as a thick subcategory by objects of the form 
$f_*(\mathbb{F}_p)$ for $f: Y \to X$ a finite, finitely presented morphism. 
\end{lemma}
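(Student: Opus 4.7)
The plan is to deduce the lemma from the dévissage via Zariski's main theorem that already appeared in the proof of \Cref{FhasimageinRH}. By \Cref{compactobjofderivedcat} applied with $\mathbb{F}_p$-coefficients, $\mathcal{D}^b_{\mathrm{cons}}(X_{\mathrm{et}}, \mathbb{F}_p)$ coincides with the subcategory of compact objects in $\mathcal{D}(X_{\mathrm{et}}, \mathbb{F}_p)$. The same argument as in \Cref{derivedTptors}(3), using the basis of affine schemes for $X_{\mathrm{et}}$ recorded in \Cref{smalletalesite}, shows that $\mathcal{D}(X_{\mathrm{et}}, \mathbb{F}_p)$ is compactly generated by the objects $j_!(\mathbb{F}_p)$ as $j: U \to X$ ranges over \'etale morphisms with $U$ affine. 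Consequently $\mathcal{D}^b_{\mathrm{cons}}(X_{\mathrm{et}}, \mathbb{F}_p)$ is the thick subcategory of $\mathcal{D}(X_{\mathrm{et}}, \mathbb{F}_p)$ generated by these $j_!(\mathbb{F}_p)$, and it will suffice to show that each such object lies in the thick subcategory generated by $f_*(\mathbb{F}_p)$ for $f: Y \to X$ finite and finitely presented.

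To that end, I fix $j: U \to X$ \'etale with $U$ affine, so $j$ is separated, quasi-finite, and (after possibly reducing to the noetherian case by noetherian approximation) finitely presented. By Zariski's main theorem \cite[Tag 05K0]{stacks-project}, $j$ factors as a quasi-compact open immersion $j': U \hookrightarrow Z$ followed by a finite, finitely presented morphism $g: Z \to X$; the complementary closed immersion $i: Z \setminus U \hookrightarrow Z$ is then itself finitely presented. Pushing forward the excision triangle
\[ j'_!(\mathbb{F}_p) \to \mathbb{F}_p \to i_*(\mathbb{F}_p) \]
on $Z$ along $g$, and using that $g$ is finite (so $g_* = g_!$ and $g_* j'_!(\mathbb{F}_p) = j_!(\mathbb{F}_p)$), I obtain the fiber sequence
\[ j_!(\mathbb{F}_p) \to g_*(\mathbb{F}_p) \to (g \circ i)_*(\mathbb{F}_p), \]
exhibiting $j_!(\mathbb{F}_p)$ as the fiber of a morphism between two objects of the required form $f_*(\mathbb{F}_p)$ with $f$ finite and finitely presented.

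No serious obstacle is anticipated: both ingredients — the compact generation of $\mathcal{D}(X_{\mathrm{et}}, \mathbb{F}_p)$ by $j_!(\mathbb{F}_p)$ for $j$ \'etale with affine source, and the Zariski's main theorem reduction — are variants of results already appearing in the excerpt, and the present lemma amounts to recombining them at the level of the thick subcategory of compact objects. The only points requiring minor verification are the finite presentation assertions for $g$ and $i$, which are routine in the qcqs setting after noetherian approximation.
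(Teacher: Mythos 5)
Your proposal takes a genuinely different route from the paper's. The paper reduces to $X$ of finite type over $\mathbb{Z}$ via the limit formalism, and then runs a noetherian induction: for an open $j\colon U\subset X$ and a locally constant constructible $\mathcal{L}$ on $U$, it uses the ``m\'ethode de la trace'' to replace $j_!(\mathcal{L})$ by $h_!(\mathbb{F}_p)$ for $h$ a finite \'etale cover of $U$ followed by the inclusion, and only then applies Zariski's main theorem. You instead short-circuit the whole d\'evissage by compact generation and the Neeman--Thomason localization theorem: identify $\mathcal{D}^b_{\mathrm{cons}}$ with the compact objects, note the compact generators are the $j_!(\mathbb{F}_p)$ for $j$ affine \'etale, and reduce those by ZMT. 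This does avoid the m\'ethode de la trace and noetherian induction entirely, and your final ZMT step is the same as the paper's.

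There is, however, a genuine scoping gap. The compact-generation input you invoke --- \Cref{compactobjofderivedcat} and the conclusions recorded in \Cref{smalletalesite}, via \Cref{derivedTptors} --- is established in the paper only for qcqs $\mathbb{F}_p$-schemes, where affines have mod $p$ cohomological dimension $\le 1$ by Artin--Schreier. The lemma you are proving is stated for \emph{any} qcqs scheme $X$, including schemes in characteristic zero or mixed characteristic, and the cohomological-dimension bounds underlying \Cref{derivedTptors}/\Cref{compactobjofderivedcat} are not available for free there (and indeed for a general qcqs scheme compact generation of $\mathcal{D}(X_{\mathrm{et}},\mathbb{F}_p)$ by the $j_!(\mathbb{F}_p)$, with compacts $=$ bounded constructibles, can fail). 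Your ``after possibly reducing to the noetherian case by noetherian approximation'' is deployed too late and too narrowly: you use it only to justify that $g$ and $i$ are finitely presented, but the reduction has to be done up front, for the whole statement of the lemma, precisely so that the compact generation step is available. If you add the same opening move as the paper --- reduce to $X$ of finite type over $\mathbb{Z}$ via the limit formalism (using proper/finite base change to transport $f_*(\mathbb{F}_p)$ along the approximation, and that every object of $\mathcal{D}^b_{\mathrm{cons}}(X_{\mathrm{et}},\mathbb{F}_p)$ descends to a finite stage), and note that in that noetherian setting a cohomological-dimension bound for affines holds so that the compact-generation machinery applies --- then your argument is correct and arguably cleaner than the paper's.
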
 
\begin{proof} 
Recall that pushforward by finite, finitely presented morphisms preserves
constructibility 
\cite[Tag 095R]{stacks-project}. 
Without loss of generality, we may assume (in light of the limit formalism
\cite[Tag 01ZA]{stacks-project}) that $X$ is of finite type over
$\mathbb{Z}$, cf.~\cite[Prop.~5.10]{BMarc}. 
By noetherian induction, we may assume that 
the lemma is known for any proper closed subscheme of $X$ and that $X$ is
irreducible (or otherwise we could decompose $X$ into irreducible components). 
It thus suffices to show that if $j: U \subset X$ is the inclusion of an open
subset and $\mathcal{L}$ is a locally constant constructible sheaf on $U$, then $j_! (\mathcal{L})$
belongs to the desired thick subcategory. 
By the ``m\'ethode de la trace''
\cite[Tag 03SH]{stacks-project} 
and \cite[Tag 0A3R]{stacks-project}, 
we find that 
$j_!( \mathcal{L})$
belongs to the thick subcategory generated by $h_!( \mathbb{F}_p)$ for $h: V \to
X$ the
composite of a finite \'etale cover of $U$ together with the inclusion into $X$. 
By Zariski's main theorem \cite[Tag 02LQ]{stacks-project}, we can factor $h$ as the composite of an open
immersion $\widetilde{j}: V \to X'$ together with a finite map $q: X' \to X$. 
It follows that if $Z' \subset X'$  is the complementary closed subscheme to $V
\subset X'$ (with the reduced induced structure), then $h_!(\mathbb{F}_p)$ is
the mapping fiber of $q_*(\mathbb{F}_p) \to (q|_{Z'})_*(\mathbb{F}_p)$, whence
the result. 
\end{proof} 

\begin{theorem}[\cite{EmertonKisin}]
\label{EKthm2}
Suppose $R$ is a regular noetherian $\mathbb{F}_p$-algebra.\footnote{In \cite{EmertonKisin}, the result
is stated when $R$ is smooth over a field; the present extension appears in
\cite{BLRH}, and can also be deduced from the case where $R$ is smooth over
$\mathbb{F}_p$ by passage to filtered colimits and Popescu's theorem.} 
Then the functor
\[ \RHc = \mathrm{RHom}_{\mathcal{D} (\spec(R)_{\mathrm{et}}, \mathbb{F}_p)}( -, \mathbb{G}_a)
: \mathcal{D}^b_{\mathrm{cons}}( \spec(R)_{\mathrm{et}}, \mathbb{F}_p)^{op}
\to \mathcal{D}(R[F]) \]
is fully faithful 
and 
has image given precisely by $\mathcal{D}^b_{\mathrm{fgu}}(R[F])$.
\end{theorem}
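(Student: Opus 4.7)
The plan is to extract Theorem \ref{EKthm2} from the fully faithfulness of the solutions functor on the big \'etale site (Theorem \ref{fullfaithfulnessofSol}) by identifying the constructible small-site sheaves with the $\mathrm{Sol}$-image of the fgu modules. The key identity is
\[ \RHc(\mathcal{F}) \;=\; \rhom_{\mathcal{D}(\spec(R)_{\mathrm{et}}, \mathbb{F}_p)}(\mathcal{F}, \mathbb{G}_a) \;=\; \rhom_{\mathcal{D}(\sch_R, \mathbb{F}_p)}(\lambda^*\mathcal{F}, \mathbb{G}_a), \]
which uses fully faithfulness of $\lambda^*$ and $\lambda_*\mathbb{G}_a = \mathbb{G}_a$. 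On the other hand, by \Cref{fgumeansfproj} the subcategory $\mathcal{D}_{\mathrm{fgu}}(R[F])$ is contained in $\mathcal{D}^{\mathrm{fproj}}_{\mathrm{unit}}(R[F])$, so Theorem \ref{fullfaithfulnessofSol} shows that $\mathrm{Sol}$ is fully faithful on $\mathcal{D}_{\mathrm{fgu}}(R[F])^{op}$ with left inverse $\rhom_{\mathcal{D}(\sch_R)}(-,\mathbb{G}_a)$. Thus the theorem reduces to showing that $\mathrm{Sol}(\mathcal{D}_{\mathrm{fgu}}(R[F]))$ and $\lambda^*(\mathcal{D}^b_{\mathrm{cons}}(\spec(R)_{\mathrm{et}}, \mathbb{F}_p))$ coincide as subcategories of $\mathcal{D}(\sch_R, \mathbb{F}_p)$, after which $\mathrm{Sol}$ and $\lambda^*\circ \RHc$ are mutually inverse and $\RHc$ itself is the inverse of $\mathrm{Sol}$ on the small side.

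For the inclusion $\mathrm{Sol}(\mathcal{D}_{\mathrm{fgu}}) \subseteq \lambda^*(\mathcal{D}^b_{\mathrm{cons}})$, I would write an fgu module $M$ as the unitalization of a pair $(N, f: N \to \phi^* N)$ with $N$ a finitely generated $R$-module (\cite[Th.~6.1.3]{EmertonKisin}) and then, using that $R$ is regular noetherian of finite global dimension, resolve $N$ by a finite complex $F_\bullet$ of finite free $R$-modules, lifting $f$ to a map of complexes $F_\bullet \to \phi^* F_\bullet$ by projectivity. Applying \Cref{univpropertyunit} termwise presents $\mathrm{Sol}(M)$, evaluated on an $R$-algebra $R'$, as an iterated mapping fiber of operators of the form $1 - M_i \phi: R'^{n_i} \to R'^{n_i}$ for certain matrices $M_i$. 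The Jacobian criterion in characteristic $p$ (the derivative of $x^p$ vanishes) shows that each equation $v = M_i \phi(v)$ cuts out a finite \'etale $R$-algebra $T_i$, so each constituent is representable by a finite \'etale scheme over $\spec R$ and therefore belongs to $\lambda^*(\mathcal{D}^b_{\mathrm{cons}})$; rigidity under local maps of strictly henselian local $R$-algebras (\Cref{rigidity2}) together with the manifest preservation of filtered colimits of rings verifies the characterization of the $\lambda^*$-image recalled in \Cref{bigetale}.

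For the reverse inclusion $\lambda^*(\mathcal{D}^b_{\mathrm{cons}}) \subseteq \mathrm{Sol}(\mathcal{D}_{\mathrm{fgu}})$, I would invoke \Cref{thicksubcatgen} to reduce to $\mathcal{F} = f_*\mathbb{F}_p$ for $f:\spec B \to \spec R$ finite. The Artin--Schreier sequence on $\spec B$ gives $f_*\mathbb{F}_p \simeq \mathrm{fib}(f_*\mathbb{G}_a \xrightarrow{F-1} f_*\mathbb{G}_a)$, and dualizing identifies $\RHc(f_*\mathbb{F}_p)$ (up to a shift) with the unitalization of $B$ equipped with its natural $R$-linear Frobenius, which is a finitely generated unit $R[F]$-module because $B$ is finitely generated as an $R$-module. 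Combining with the first inclusion and the full faithfulness of $\mathrm{Sol}$ yields $\mathrm{Sol}(\RHc(f_*\mathbb{F}_p)) \simeq \lambda^*(f_*\mathbb{F}_p)$, closing the loop.

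The main obstacle is the first inclusion, and specifically the constructibility (as opposed to mere membership in $\lambda^*\mathcal{D}(\spec(R)_{\mathrm{et}})$) of $\mathrm{Sol}(M)$ for $M$ fgu: descent from the big to the small site via \Cref{rigidity2} is mechanical, but producing a bounded constructible representative requires both the regularity of $R$ (to obtain a finite resolution of $N$ by finite free modules) and the Jacobian criterion (to exhibit each level as finite \'etale). Once these are in hand, the Artin--Schreier reversal for finite pushforwards is classical, and $t$-exactness plus the agreement of the essential image follow by assembling the pieces.
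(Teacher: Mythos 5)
Your overall strategy agrees with the paper's: reduce via \Cref{fullfaithfulnessofSol}, use \Cref{thicksubcatgen} and Artin--Schreier for one inclusion, and unitalization plus \Cref{rigidity2} for the other. However, there are two genuine mathematical errors in the details.

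First, the claim that the scheme $T_i = \spec R[x_1,\dots,x_n]/(x_j - \sum_k (M_i)_{jk}x_k^p)$ is \emph{finite} \'etale over $\spec R$ is false unless the matrix $M_i$ is invertible over $R$, and after resolving the pre-unitalization module $(N,f)$ the matrices $M_i$ are in general not invertible. Already for $R = \mathbb{F}_p[t]$, $n=1$, $M_i = (t)$, the scheme $\spec R[x]/(x - t x^p)$ has geometric fibers of cardinality $1$ over $t=0$ and $p$ over $t\ne 0$, so it is not finite flat. The $T_i$ are quasi-finite \'etale separated of finite presentation, which still gives a constructible \'etale sheaf of $\mathbb{F}_p$-modules, so the conclusion survives after correcting the statement; but as written the crucial constructibility claim (which you explicitly flag as the main obstacle) rests on a false assertion. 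The paper sidesteps this entirely by an abstract argument: $\mathrm{Sol}(M)$ lies in the thick subcategory generated by products $\prod_I\mathbb{G}_a$, hence is pseudo-coherent in $\mathcal{D}(\sch_R,\mathbb{F}_p)$, so its $\lambda^*$-preimage is compact in truncations and therefore has constructible cohomology by \Cref{compactobjofderivedcat}.

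Second, the phrase ``the unitalization of $B$ equipped with its natural $R$-linear Frobenius'' does not parse: the unitalization construction (\Cref{unitalization}) takes as input a pair $(N, f\colon N\to\phi^*N)$, whereas the Frobenius of $B$ gives a map $\phi^*B \to B$ in the \emph{opposite} direction. The correct object, as in the paper, is the unitalization of the $R$-linear dual $\mathrm{RHom}_R(B,R)$ equipped with the transpose of $\phi^*B\to B$; since $B$ need not be projective over $R$, one must take a derived dual and work with a complex, using finiteness of the projective dimension of $B$ over $R$. The direction of the structure map is precisely the point of the dualization, and eliding it obscures why the contravariant functor $\RHc$ lands in unit $R[F]$-modules at all.
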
 
\begin{proof} 
Given $\mathcal{F} \in 
\mathcal{D}^b_{\mathrm{cons}}( \spec(R)_{\mathrm{et}}, \mathbb{F}_p)$, we will
show that 
$\lambda^* \mathcal{F}$ (with notation as in \Cref{bigetale}) is in the essential image of the fully faithful
embedding
$\mathrm{Sol}: \mathcal{D}^{\mathrm{fproj}}_{\mathrm{unit}}(R[F])^{op} \to \mathcal{D}(\sch_R,
\mathbb{F}_p)$ of \Cref{fullfaithfulnessofSol}; more precisely, there exists $M \in
\mathcal{D}^b_{\mathrm{fgu}}(R[F])$ (of finite projective dimension
by \Cref{fgumeansfproj}, and necessarily unique) such that $\lambda^* \mathcal{F} =
\mathrm{Sol}(M)$. It follows that $$M = \mathrm{RHom}_{\mathcal{D}(
\spec(R)_{\mathrm{et}}, \mathbb{F}_p)}( \mathcal{F}, \mathbb{G}_a) = 
\mathrm{RHom}_{\mathcal{D}( \sch_R, \mathbb{F}_p)}( \lambda^* \mathcal{F},
\mathbb{G}_a),$$
 since
$\mathbb{G}_a$ is pushed forward from the big \'etale site; in particular, $M =
\RHc( \mathcal{F})$, which gives full faithfulness of
$\RHc$, with left inverse given by $\lambda_* \mathrm{Sol}$. 

Suppose $B$ is a finite, finitely presented $R$-algebra; let $g: \spec(B) \to
\spec(R)$. 
Then $\lambda^* ( g_* ( \mathbb{F}_p))$ is given by the functor which sends an
$R$-algebra $R'$ to the fiber of $F-1$ on $B \otimes^{\mathbb{L}}_R R'$, thanks to the
Artin--Schreier sequence in the form of \Cref{ASanimatedalg}. 
In light of \Cref{univpropertyunit} (and a resolution, using that $B$ has
finite projective dimension \cite[Lem.~11.3.10]{BLRH}), this is precisely $\mathrm{Sol}$ of the unitalization of
$\mathrm{RHom}_R(B, R)$ equipped with the linear dual of the Frobenius $\phi^* B
\to B$. Since objects of the form $g_*( \mathbb{F}_p)$ generate
$\mathcal{D}^b_{\mathrm{cons}}(\spec(R)_{\mathrm{et}}, \mathbb{F}_p)$ as a
thick subcategory (\Cref{thicksubcatgen}), we
conclude that the same holds with $g_*( \mathbb{F}_p)$
replaced by any object of $\mathcal{D}^b_{\mathrm{cons}}(\spec(R)_{\mathrm{et}},
\mathbb{F}_p)$ as desired.

It remains to identify the essential image of $\RHc$. 
Suppose $M \in \mathcal{D}^b_{\mathrm{fgu}}(R[F])$. We first show that $\mathrm{Sol}(M)$
belongs to the 
image of the pullback $\lambda^*$ from the small \'etale site to the big \'etale
site. 
Without loss of generality, we can suppose that 
$M$ is discrete.
By assumption and \Cref{fguproperties}, $M$ arises as the unitalization of some 
finitely generated $R$-module $M_0$ equipped with a map $M_0 \to \phi^* M_0$. 
Since $M_0$ has finite
projective dimension over $R$ by \cite[Lem.~11.3.10]{BLRH}, 
we find easily (by taking resolutions of $M_0, M_0 \to \phi^* M_0$) that $M$ belongs to the thick subcategory of 
$\mathcal{D}^b_{\mathrm{fgu}}(R[F])$ generated by objects that arise 
as the unitalization of a 
finitely generated \emph{free} $R$-module 
$M_0$ with a map $M_0 \to \phi^* M_0$. We may therefore assume that $M$
itself is obtained as a unitalization of $M_0 \to \phi^* M_0$ with $M_0$ free. 
Thus, $M_0^{\vee}$ is an $R[F]$-module which is 
finitely generated free as an $R$-module. 
Using the universal property of the unitalization again
(\Cref{univpropertyunit}), 
we find that $\mathrm{Sol}(M_0^{\mathrm{unit}})$ is the functor carrying an
$R$-algebra $R'$ to the fiber of $F-1$ on
$R' \otimes_R M_0^{\vee} $. 
This functor evidently commutes with filtered colimits in $R'$. 
The desired rigidity statement (needed to see that $\mathrm{Sol}(M)$ belongs
to the image of $\lambda^*$ as in \Cref{bigetale}) now follows from 
\Cref{rigidity2}. 
Therefore, we conclude $\mathrm{Sol}(M)$ belongs to the image of the pullback $\lambda^*$
from the small \'etale site to the big \'etale site. 

Finally, we need to show that the preimage of $\mathrm{Sol}(M)$ under $\lambda^*$ actually belongs to the
bounded constructible $\infty$-category (and not simply to
$\mathcal{D}(\spec(R)_{\mathrm{et}}, \mathbb{F}_p)$). Boundedness is evident,
since $\mathrm{Sol}(M)$ is bounded as $M$ has finite projective dimension over
$R[F]$ (\Cref{fgumeansfproj}). Moreover, 
constructibility follows because $\mathrm{Sol}(M)$ (as a sheaf on $\sch_R$)
belongs to the thick subcategory generated by $\prod_I \mathbb{G}_a$, whence is
pseudo-coherent in $\mathcal{D}(\sch_R, \mathbb{F}_p)$. This implies that the
preimage of $\mathrm{Sol}(M)$ is compact in any truncation of
$\mathcal{D}(\spec(R)_{\mathrm{et}}, \mathbb{F}_p)$, whence the cohomology
sheaves are constructible  (cf.~\Cref{compactobjofderivedcat} and the references
in the proof). 
\end{proof} 
Note that the proof also establishes that for $M \in
\mathcal{D}^b_{\mathrm{fgu}}(R[F])$, we have a natural equivalence 
$$\RHc( \lambda_*( \mathrm{Sol}(M))) \simeq 
\mathrm{RHom}_{\mathcal{D}( \sch_R, \mathbb{F}_p)}( \lambda^* \lambda_*
\mathrm{Sol}(M), \mathbb{G}_a) = 
\mathrm{RHom}_{\mathcal{D}( \sch_R, \mathbb{F}_p)}( 
\mathrm{Sol}(M), \mathbb{G}_a)\simeq 
M,$$ 
in light of 
\Cref{fullfaithfulnessofSol} (and as proved in \cite{EmertonKisin}).

\appendix

\section{A proof of Breen's theorem}

In the appendix, we describe a proof of the theorem of \cite{Breen} that is
independent of explicit computations in algebraic topology
(cf.~\cite[Lem.~0.3]{FranjouLannesSchwartz} and \cite[Cor.~1.2]{KuhnIII} for
other algebraic proofs). The essential idea of the
proof (the use of $v$-descent results of \cite{BSWitt}) was suggested by
Scholze. Let us first recall the statement. 

\begin{theorem}[Breen \cite{Breen}] 
\label{Breenthmappendix}
Let $R$ be a perfect $\mathbb{F}_p$-algebra. Then
$\mathrm{Ext}^i_{\mathcal{D}(\schpp_R, \mathbb{F}_p)}(
\mathbb{G}_a, \mathbb{G}_a) = 0$ for $i > 0$ and is given by 
$R[F^{\pm 1}]$ for $i = 0$. 
\end{theorem}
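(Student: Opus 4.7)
Following Scholze's suggestion, the plan is to pass from the perfect étale site to the perfect $v$-site, where the structure sheaf $\mathbb{G}_a$ satisfies the strong descent properties established in \cite{BSWitt}.

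First, I would reduce to the absolute case $R = \mathbb{F}_p$. Both sides of the statement are compatible with base change along a map of perfect $\mathbb{F}_p$-algebras $\mathbb{F}_p \to R$: the right-hand side tautologically, and the left-hand side by exactly the geometric-morphism argument used in the proof of \Cref{Breenthm}, where the pushforward of the representable $\mathbb{G}_a$ to $\schpp_{\mathbb{F}_p}$ is again $\mathbb{G}_a$, and $\mathrm{RHom}$ passes through this adjunction cleanly.

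Next, refine the topology to the $v$-topology on $\schpp_{\mathbb{F}_p}$. The key input from \cite{BSWitt} is that $\mathbb{G}_a$ (the structure presheaf) satisfies $v$-descent on perfect affines, with $R\Gamma_v(\spec A, \mathbb{G}_a) = A$ concentrated in degree $0$ for any perfect $\mathbb{F}_p$-algebra $A$. In particular, $\mathbb{G}_a$ is a hypercomplete $v$-sheaf of bounded $v$-cohomological dimension on affines, so that $\mathrm{RHom}_{\mathcal{D}(\schpp_{\mathbb{F}_p}, \mathbb{F}_p)}(\mathbb{G}_a, \mathbb{G}_a)$ may be computed equivalently on the $v$-site. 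Apply the Breen--Deligne resolution $X_\bullet(\mathbb{G}_a) \to \mathbb{G}_a$, whose $n$-th term is a finite direct sum of representables $\mathbb{Z}[h_{\mathbb{G}_a^{j}}]$; substituting and invoking the $v$-descent identifies the target with the totalization of a cochain complex whose $n$-th term is a finite product of perfect polynomial rings $\mathbb{F}_p[x_1^{1/p^\infty}, \ldots, x_j^{1/p^\infty}]$, in degree $0$.

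The principal obstacle is then a purely algebraic computation: showing that this explicit cochain complex has cohomology $\mathbb{F}_p[F^{\pm 1}]$ concentrated in degree $0$. I would approach this by filtering the perfect polynomial rings by total polynomial degree (which is compatible with the Breen--Deligne differentials via the $\mathbb{G}_m$-scaling action on $\mathbb{G}_a$), reducing to a homogeneous-degree calculation. The degree-$1$ pieces, corresponding to additive polynomial functors, contribute exactly $\mathbb{F}_p[F^{\pm 1}]$ since the only additive natural endomorphisms of $\mathbb{G}_a$ on perfect schemes are integer powers of Frobenius. The vanishing of all higher-degree homogeneous contributions in every cohomological degree is the real content; I expect this to be handled using that $F$ acts invertibly on perfect schemes (so non-$p$-adic polynomial degrees can be twisted down), combined with standard Ext-computations in polynomial functor categories as carried out in \cite{FranjouLannesSchwartz, KuhnIII}, now with the $v$-descent having reduced the input data to perfect polynomial rings rather than arbitrary section rings.
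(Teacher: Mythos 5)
Your starting point --- passing to the $v$-site on perfect schemes and invoking the descent results of \cite{BSWitt} --- matches the paper's strategy, but the specific way you then use the $v$-topology does no actual work. After applying the Breen--Deligne resolution to $\mathbb{G}_a$, the complex of perfect polynomial rings you obtain is the same whether one computes on the \'etale site or the $v$-site: in both cases $R\Gamma(\spec A, \mathbb{G}_a) = A$ for any perfect affine $A$, since \'etale cohomology of a quasi-coherent sheaf on an affine already vanishes in positive degrees. So the passage to the $v$-site does not simplify the Breen--Deligne complex, and the ``real content'' --- vanishing of the higher-degree homogeneous pieces in each cohomological degree --- remains exactly the functor-homology computation. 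Your proposal discharges it by appealing to \cite{FranjouLannesSchwartz} and \cite{KuhnIII}, but that is precisely what the appendix sets out to avoid; and if one is willing to cite those results, the $v$-descent step is superfluous, since those computations never needed it.

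The missing idea is what actually makes the $v$-topology indispensable. Rather than resolving $\mathbb{G}_a$ directly, the paper replaces the source $\mathbb{G}_a$ by the big Witt vectors $W^{\mathrm{big}}$ (of which $\mathbb{G}_a$ is a retract mod $p$ on perfect schemes) and approximates $W^{\mathrm{big}}$ by the rational Witt vectors $W_{\mathrm{rat}}$. The crucial input (\Cref{quotienthsheafofsets}) is that in the $v$-topology --- and not in the \'etale topology --- quotients of perfect affine schemes by finite groups agree with quotients in the category of sheaves of sets; consequently (\Cref{wratfreecommonoid}) the monoid $W_{\mathrm{rat}}^+$ becomes the genuinely free commutative monoid sheaf on the pointed perfect affine line, so that $\rhom(W_{\mathrm{rat}}, \mathbb{G}_a)$ is trivially concentrated in degree $0$ using the $v$-acyclicity of $\mathbb{G}_a$. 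The remaining step, \Cref{mainsubsectionthm}, is an elementary divided-power degree estimate showing that $W_{\mathrm{rat}} \to W^{\mathrm{big}}$ induces an equivalence on $\rhom(-, \mathbb{G}_a)$. None of this requires the Steenrod algebra or polynomial-functor Ext computations. You identified the correct tool, but your outline deploys it at a stage where it is inert, and the genuine difficulty is pushed back onto the references the argument is meant to bypass.
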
 

The strategy is as follows. It suffices to replace the source $\mathbb{G}_a$ by the big
Witt vectors, $W^{\mathrm{big}}$ and to compute the Ext groups in
$\mathcal{D}(\schpp_R)$. 
In fact, this follows from the equivalence
\( \mathrm{RHom}_{\mathcal{D}(\schpp_R)}( W,
\mathbb{G}_a) = 
\mathrm{RHom}_{\mathcal{D}(\schpp_R, \mathbb{F}_p)}( \mathbb{G}_a,
\mathbb{G}_a)
\) and the fact that $W$ is a retract of $W^{\mathrm{big}}$. 
It will be convenient to use big Witt vectors for the
following reason: $W^{\mathrm{big}}$ can be approximated by  the subsheaf
$W^{\mathrm{rat}}$ of \emph{rational} Witt vectors, for which we can do the calculation explicitly
since, in the $v$-topology, $W^{\mathrm{rat}}$ is essentially the free abelian
group on $\mathbb{A}^1$ (with a basepoint at zero). This approach does not
recover the more general calculation of extensions of the additive group on all
(not necessarily perfect) $\mathbb{F}_p$-schemes of \cite{Breen78}.

Generalizations of 
\Cref{Breenthmappendix} have also been obtained in the relatively perfect site by
Kato \cite{Kato1} and by Ansch\"utz--Le Bras \cite{ALBFourier} in analytic (perfectoid) settings.

\subsection{Comparing rational and big Witt vectors}
Fix a base field $k$ (which will later be taken to be $\mathbb{F}_p$). 
Let $\sch_{k}$ be the site of all 
(qcqs) $k$-schemes, equipped with the \'etale topology. 
In this subsection, we compare the sheaves of rational Witt vectors and big Witt
vectors on $\sch_{k}$.

\begin{definition}[Ind-affine schemes]
Let $\mathrm{AffSch}^{\ast}_{k, \mathrm{ft}}$ denote the category of
pointed affine
schemes of finite type over $k$ (i.e., the opposite to the category of finite type
augmented $k$-algebras). We consider the ind-category
$\mathrm{Ind}( \mathrm{AffSch}^{\ast}_{k, \mathrm{ft}})$. 
Now
$\mathrm{AffSch}^{\ast}_{k, \mathrm{ft}}$ admits finite coproducts and finite
products, and finite products distribute over finite coproducts. 
It follows that 
$\mathrm{Ind}( \mathrm{AffSch}^{\ast}_{k, \mathrm{ft}})$
admits filtered colimits (and arbitrary coproducts) and finite products, and finite products distribute
over filtered colimits (and arbitrary coproducts). 

We have a natural functor \begin{equation} \label{functor} \mathrm{Ind}( \mathrm{AffSch}^{\ast}_{k,
\mathrm{ft}}) \to \mathrm{Shv}(\sch_{k}, \mathrm{Set})\end{equation} carrying the
class of $X \in \mathrm{AffSch}^{\ast}_{k, \mathrm{ft}}$ to the
representable sheaf and then is defined by Kan extension; this construction preserves filtered colimits and
finite products. 
Typically, we will abuse notation and 
refer to an object of 
$\mathrm{Ind}( \mathrm{AffSch}^{\ast}_{k,
\mathrm{ft}})$ and the corresponding sheaf on $\sch_{k}$ by the same notation. 
\end{definition}

\begin{construction}[Commutative monoids in ind-affine schemes]
\label{commmonoid}
We consider commutative monoids in 
$\mathrm{Ind}( \mathrm{AffSch}^{\ast}_{k, \mathrm{ft}})$. 
Note that any such yields a commutative monoid in
$\mathrm{Shv}(\sch_{k}, \mathrm{Set})$, since the functor of \eqref{functor}
preserves finite products.  

Given  $X \in 
\mathrm{AffSch}^{\ast}_{k, \mathrm{ft}}$, 
we can form the free commutative monoid on $X$ in
the category $\mathrm{Ind}(\mathrm{AffSch}^{\ast}_{k, \mathrm{ft}})$ (note that
this forces the basepoint to be the monoid unit); this is the filtered colimit
$\mathrm{free}_{\ast}(X)\stackrel{\mathrm{def}}{=}\varinjlim_i \mathrm{Sym}^i X$ where the
inclusion maps are induced by $\ast$ (and the colimit is taken in the
ind-sense). 
\end{construction}

Now we specialize to the example of interest. 
\begin{construction}[The free commutative monoid on $\mathbb{A}^1_{k}$]
We let $\mathbb{A}^1_{k}$ denote the affine line over $k$
with basepoint at origin. 
The $i$th symmetric power
$\mathrm{Sym}^i 
\left(\mathbb{A}^1_{k}\right), i \geq 0 $ is given as $ \spec
k[s_1, \dots, s_i]$, for $s_1, \dots, s_i$
the elementary symmetric functions. 
Using the basepoint at the origin, we can 
form the object of 
$\mathrm{Ind}( \mathrm{AffSch}^{\ast}_{k, \mathrm{ft}})$ given as
$W_{\mathrm{rat}}^+ = \free_{\ast}( \mathbb{A}^1_k) =  \varinjlim_i {\mathrm{Sym}^i
\left(\mathbb{A}^1_{k}\right)} \in 
\mathrm{Ind}( \mathrm{AffSch}^{\ast}_{k, \mathrm{ft}})$. 

Unwinding the definitions and using the theorem on symmetric functions, we see that 
for any $k$-algebra $R$, the commutative monoid
$W_{\mathrm{rat}}^+(R)$ is $1 + tR[t]$ under
multiplication.\footnote{A variant of this fact is used essentially in the
Cartier--Dieudonn\'e theory of commutative formal groups,
cf.~\cite[Th.~2.2]{Chai}.} 
Given a ring $R$, the group completion of $W_{\mathrm{rat}}^+(R)$ is known as the 
\emph{rational Witt vectors} of $R$, cf.~\cite{Almkvist1, Almkvist2}, and the notation 
$W_{\mathrm{rat}}^+$ is chosen for this reason. 
In the following, we let $W_{\mathrm{rat}}$ denote the group completion of
$W_{\mathrm{rat}}^+$ as sheaves on $\sch_{k}$ (so $W_{\mathrm{rat}}$
is the \'etale sheafification of the rational Witt vectors). 
\end{construction}

\begin{construction}[Big Witt vectors] 
We let $W^{\mathrm{big}}$ denote the big Witt vector functor
(cf.~\cite[Sec.~17]{Hazewinkel} for an account); it
carries a commutative ring $R$ to the abelian group $1 + t R[[t]]$ under
multiplication, and is representable by a polynomial ring on countably many
variables. We have a natural map $W_{\mathrm{rat}}^+ \to W^{\mathrm{big}}$
including polynomials inside power series. 
\end{construction} 

The purpose of this subsection is to prove the following result to the effect that maps into 
$\mathbb{G}_a$ (or any direct sum of copies of such) does not see the difference
between $W_{\mathrm{rat}}$ and $W^{\mathrm{big}}$; the argument was inspired by the
Clausen--Scholze solid
formalism \cite{condensed}. 
\begin{proposition} 
\label{mainsubsectionthm}
The map 
\( W_{\mathrm{rat}} \to W^{\mathrm{big}}  \)
in $\mathcal{D}( \sch_{k})$ induces an equivalence upon applying
$\rhom_{\mathcal{D}( \sch_{k})}	(-, \mathbb{G}_a \otimes_k V)$ for any
$k$-vector space $V$. 
\end{proposition}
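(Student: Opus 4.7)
The plan is to prove $\mathrm{RHom}(Q, \mathbb{G}_a \otimes_k V) = 0$ where $Q := \mathrm{cofib}(W_{\mathrm{rat}} \to W^{\mathrm{big}})$ in $\mathcal{D}(\sch_{k})$. First, I would set up the $t$-adic filtration: let $F^n W^{\mathrm{big}}(R) := 1 + t^{n+1} R[[t]]$, so that $W_n^{\mathrm{big}} := W^{\mathrm{big}}/F^n W^{\mathrm{big}}$ is represented by $\mathbb{A}^n_{k}$ (with the truncated polynomial-multiplication group structure). The composite $W_{\mathrm{rat}} \to W^{\mathrm{big}} \twoheadrightarrow W_n^{\mathrm{big}}$ is surjective, with kernel $F^n W_{\mathrm{rat}}$, so by the octahedron axiom applied to $W_{\mathrm{rat}} \to W^{\mathrm{big}} \to W_n^{\mathrm{big}}$,
\[
Q \simeq \mathrm{cofib}(F^n W_{\mathrm{rat}} \to F^n W^{\mathrm{big}}) \quad \text{for every } n \geq 0.
\]
Since the right-hand side is independent of $n$, applying $\mathrm{RHom}(-, \mathbb{G}_a \otimes V)$ and taking the filtered colimit yields
\[
\mathrm{RHom}(Q, \mathbb{G}_a \otimes V) \simeq \mathrm{fib}\Bigl( \mathrm{colim}_n \mathrm{RHom}(F^n W^{\mathrm{big}}, \mathbb{G}_a \otimes V) \to \mathrm{colim}_n \mathrm{RHom}(F^n W_{\mathrm{rat}}, \mathbb{G}_a \otimes V) \Bigr).
\]
The task thus reduces to showing both colimits vanish; equivalently, via the fiber sequences $F^n W_\bullet \to W_\bullet \to W_n^{\mathrm{big}}$, that both natural maps $\mathrm{colim}_n \mathrm{RHom}(W_n^{\mathrm{big}}, \mathbb{G}_a \otimes V) \to \mathrm{RHom}(W^{\mathrm{big}}, \mathbb{G}_a \otimes V)$ and $\mathrm{colim}_n \mathrm{RHom}(W_n^{\mathrm{big}}, \mathbb{G}_a \otimes V) \to \mathrm{RHom}(W_{\mathrm{rat}}, \mathbb{G}_a \otimes V)$ are equivalences.

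For the first equivalence, I would invoke the Breen--Deligne resolution of $W^{\mathrm{big}}$: the terms $\mathbb{Z}[(W^{\mathrm{big}})^j]$ are free abelian sheaves on representable sheaves $(W^{\mathrm{big}})^j$ which are affine, with coordinate ring a polynomial ring in countably many variables. By \'etale-cohomological vanishing of $\mathbb{G}_a$ on affine schemes, $\mathrm{RHom}(\mathbb{Z}[(W^{\mathrm{big}})^j], \mathbb{G}_a \otimes V) = \mathcal{O}((W^{\mathrm{big}})^j) \otimes_k V$. Since the polynomial ring in countably many variables is the filtered colimit of its finite-variable subrings, $\mathcal{O}((W^{\mathrm{big}})^j) = \mathrm{colim}_n \mathcal{O}((W_n^{\mathrm{big}})^j)$; exactness of filtered colimits applied to the Breen--Deligne total complex then gives the first equivalence.

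For the second equivalence, I would use the universal property of \Cref{commmonoid}, which identifies $W_{\mathrm{rat}}^+ = \free_*(\mathbb{A}^1_k)$ as the free commutative monoid on pointed $\mathbb{A}^1_k$. After group completion (a left adjoint, hence preserving colimits), this yields the fundamental identification
\[
W_{\mathrm{rat}} \simeq \mathrm{cofib}\bigl(\mathbb{Z}[\{0\}] \to \mathbb{Z}[\mathbb{A}^1_k]\bigr)
\]
in $\mathcal{D}(\sch_k)$, so $\mathrm{RHom}(W_{\mathrm{rat}}, \mathbb{G}_a \otimes V) \simeq tk[t] \otimes_k V$ in degree $0$. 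The middle colim is identified with $tk[t] \otimes V$ on $\mathrm{Hom}^0$ via the power-sum identification $\mathrm{Hom}(W_n^{\mathrm{big}}, \mathbb{G}_a) = k\langle p_1, \ldots, p_n \rangle$ (primitive polynomials in the coordinates, matching the evaluation $p_i(1+at) = a^i$ which corresponds to $t^i \in tk[t]$ under the universal property).

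The main obstacle is controlling the higher $\mathrm{Ext}$ groups in the colimit over $n$ for the second equivalence. In characteristic zero this is clean, since the unipotent groups $W_n^{\mathrm{big}}$ have no higher $\mathrm{Ext}$'s to $\mathbb{G}_a$. In positive characteristic, individual $\mathrm{Ext}^i(W_n^{\mathrm{big}}, \mathbb{G}_a \otimes V)$ can be nonzero (Witt-vector-type extensions arise), and one must verify that every class is eventually annihilated under restriction along $W_{n+m}^{\mathrm{big}} \twoheadrightarrow W_n^{\mathrm{big}}$ as $m \to \infty$, making the colim vanish in positive cohomological degrees. A careful spectral-sequence argument, using the $t$-adic filtration of $W_n^{\mathrm{big}}$ with graded pieces $\mathbb{G}_a$ and bootstrapping from the (already established) first equivalence, should resolve this; alternatively, one can directly compare the Breen--Deligne complex for $W_n^{\mathrm{big}}$ to the explicit complex coming from the universal property of $W_{\mathrm{rat}}$.
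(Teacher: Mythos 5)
Your proposal takes a genuinely different route from the paper (a $t$-adic filtration argument rather than the paper's comparison of $R\Gamma(S_\bullet \otimes W^+_{\mathrm{rat}},-)$ and $R\Gamma(S_\bullet \otimes W^{\mathrm{big}},-)$ via divided-power connectivity), and the octahedron reduction and the first equivalence are reasonable. But there is a fatal error in the second equivalence, beyond the gap you flag at the end. The identification
$W_{\mathrm{rat}} \simeq \mathrm{cofib}\bigl(\mathbb{Z}[\{0\}] \to \mathbb{Z}[\mathbb{A}^1_k]\bigr)$
in $\mathcal{D}(\sch_k)$ does \emph{not} follow from \Cref{commmonoid}. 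That construction exhibits $W^+_{\mathrm{rat}}$ as the free commutative monoid on pointed $\mathbb{A}^1_k$ inside $\mathrm{Ind}(\mathrm{AffSch}^{\ast}_{k,\mathrm{ft}})$, where $\mathrm{Sym}^i$ is the \emph{scheme-theoretic} symmetric power. The functor \eqref{functor} to sheaves on $\sch_k$ preserves finite products and filtered colimits, but it does not carry this free commutative monoid to the free commutative monoid in sheaves of commutative monoids on $\sch_k$: the sheaf-theoretic $\mathrm{Sym}^i h_{\mathbb{A}^1}$ is the \'etale-sheafified quotient $h_{(\mathbb{A}^1)^i}/\Sigma_i$, which differs from $h_{\mathrm{Sym}^i(\mathbb{A}^1)}$ because $(\mathbb{A}^1)^i \to \mathrm{Sym}^i\mathbb{A}^1$ is ramified and hence not an \'etale cover. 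This is exactly why the paper proves the free-commutative-monoid statement only on the perfect $v$-site (\Cref{wratfreecommonoid}), relying on \Cref{quotienthsheafofsets}, which uses $v$-descent and has no analogue on $\sch_k$ with the \'etale topology.

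Consequently, $\rhom_{\mathcal{D}(\sch_k)}(W_{\mathrm{rat}}, \mathbb{G}_a \otimes_k V)$ is \emph{not} concentrated in degree $0$ with value $tk[t]\otimes V$; the paper never claims this, and indeed the correct computation (via \Cref{forallA} applied to the Breen--Deligne terms) produces an answer built from divided powers $\Gamma^i$ of reduced chains, which in characteristic $p$ contributes in positive degrees. Your concluding paragraph correctly senses that the higher $\mathrm{Ext}$'s are the crux, but the proposed remedies are circular (the ``explicit complex coming from the universal property of $W_{\mathrm{rat}}$'' is the very thing that fails) or not obviously sufficient (the first equivalence does not control $\rhom(W_{\mathrm{rat}},-)$). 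The paper's workaround is to sidestep any direct computation of $\rhom(W_{\mathrm{rat}},-)$: it compares the explicit scheme-theoretic sections formulas \eqref{fnsonwrat} and \eqref{Wbigfunctions} (a product versus a sum of divided powers), promotes $S$ to a connected simplicial pointed finite set so that the divided powers become increasingly connective (\Cref{functionsonGa}), and only then invokes the Breen--Deligne resolution of $W_{\mathrm{rat}}[1]$ and $W^{\mathrm{big}}[1]$.
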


\begin{remark}[Tensors with pointed sets] 
In the following, we will use that 
any pointed category with finite coproducts (such as the categories
$\mathrm{Ind}( \mathrm{AffSch}^{\ast}_{k, \mathrm{ft}})$ or commutative monoids
in it, or the
category of sheaves of abelian monoids on a site) is
naturally tensored over the category of finite pointed sets; we denote this
tensor by $\otimes$.  Explicitly, given a pointed set and an object $X$, the
tensor $S \otimes X$ is the pushout of the coproduct $\bigsqcup_S X $ along $X
\to \ast$ (where $X$ includes in $\bigsqcup_S X$ via the basepoint of
$S$).\footnote{Note that if $S = S_0 \sqcup \ast$, then $S \otimes
X = \bigsqcup_{S_0} X$; thus the above construction only requires finite
coproducts and not pushouts.} 
\end{remark} 

In the next result, we identify an object of $\mathrm{Ind}(
\mathrm{AffSch}^{\ast}_{k, \mathrm{ft}})$ with the corresponding sheaf on
$\sch_{k}$, so that we can talk about the cohomology of a sheaf on
an object of $\mathrm{Ind}(
\mathrm{AffSch}^{\ast}_{k, \mathrm{ft}})$. 

\begin{proposition} 
Let $A$ be an augmented $k$-algebra of finite type, so $\spec(A)
\in \mathrm{AffSch}^{\ast}_{k, \mathrm{ft}}$. 
Let $I_A$ denote the augmentation ideal, and let $V$ be any $k$-vector space. 

For any pointed finite set $S$, there is a natural isomorphism
\begin{equation} 
R \Gamma( S \otimes \mathrm{free}_{\ast}(\spec(A)), \mathbb{G}_a \otimes_k V) 
\simeq \prod_{i \geq 0} \Gamma^i ( \hom_{k}(
k[\overline{S}], I_A)) \otimes_k V, 
\end{equation} 
for $k[\overline{S}]$ the cokernel of the map $k \to
k[S]$ arising from the basepoint and $\Gamma^i$ denoting the $i$th divided power
functor. 
\label{forallA}
\end{proposition}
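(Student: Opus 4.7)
The plan is to reduce the statement to the case $S = S^0$ via the universal property of $\mathrm{free}_*$, and then compute the cohomology of $\mathrm{free}_*(\spec B)$ directly as a derived inverse limit over its defining symmetric powers.

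First I would observe that $\mathrm{free}_*$ from pointed (ind-)affine schemes to commutative monoids in them is a left adjoint, hence commutes with the tensor by any finite pointed set: $S \otimes \mathrm{free}_*(\spec A) \simeq \mathrm{free}_*(S \otimes \spec A)$. The pointed tensor $S \otimes \spec A$ is the $|\overline S|$-fold coproduct of $\spec A$ in $\mathrm{AffSch}^*_{k,\mathrm{ft}}$, and dually (since coproduct of pointed affine schemes corresponds to product of augmented $k$-algebras, i.e.\ fiber product over $k$) we have $S \otimes \spec A = \spec B$ where $B = A \times_k \cdots \times_k A$ ($|\overline S|$ factors). As a $k$-vector space $B = k \oplus I_A^{\oplus|\overline S|}$, so $I_B = I_A^{\oplus|\overline S|} \cong \hom_k(k[\overline S], I_A)$. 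It therefore suffices to treat the case $S = S^0$ applied to $B$, i.e.\ to prove
\[
R\Gamma(\mathrm{free}_*(\spec B), \mathbb{G}_a \otimes_k V) \simeq \prod_{j \geq 0} \Gamma^j(I_B) \otimes_k V.
\]

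For this, I would use the presentation $\mathrm{free}_*(\spec B) = \varinjlim_i \mathrm{Sym}^i(\spec B)$, which (by converting a colimit of representables in sheaves to a limit under $R\Gamma$) gives
\[
R\Gamma(\mathrm{free}_*(\spec B), \mathbb{G}_a \otimes_k V) \simeq R\varprojlim_i R\Gamma(\mathrm{Sym}^i(\spec B), \mathbb{G}_a \otimes_k V).
\]
Each $\mathrm{Sym}^i(\spec B) = \spec((B^{\otimes i})^{S_i})$ is affine of finite type over $k$, and higher \'etale cohomology of $\mathbb{G}_a \otimes_k V$ on affines vanishes (reducing by filtered colimits and coherence of the site to the classical vanishing for $\mathbb{G}_a$), so the $i$th term is the discrete group $(B^{\otimes i})^{S_i} \otimes_k V$. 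Decomposing $B = k \oplus I_B$ yields $B^{\otimes i} = \bigoplus_{T \subseteq \{1, \ldots, i\}} I_B^{\otimes T}$; an orbit–stabilizer (equivalently, Frobenius reciprocity) computation on the $S_i$-action identifies $(B^{\otimes i})^{S_i} = \bigoplus_{j = 0}^i \Gamma^j(I_B)$, where $\Gamma^j(I_B) := (I_B^{\otimes j})^{S_j}$ is the divided power. The transition map $(B^{\otimes(i+1)})^{S_{i+1}} \to (B^{\otimes i})^{S_i}$, which is dual to the basepoint inclusion $\mathrm{Sym}^i \hookrightarrow \mathrm{Sym}^{i+1}$ and is concretely given by applying the augmentation on the $(i{+}1)$-st tensor factor, restricts to the identity on $\Gamma^j(I_B)$ for $j \leq i$ and kills $\Gamma^{i+1}(I_B)$. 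In particular these maps are surjective.

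The proof then concludes by Mittag–Leffler: the derived inverse limit of the discrete diagram $\bigoplus_{j=0}^i \Gamma^j(I_B) \otimes_k V$ along these surjective projections is concentrated in degree zero and equals $\prod_{j \geq 0} \Gamma^j(I_B) \otimes_k V$; substituting $I_B \simeq \hom_k(k[\overline S], I_A)$ produces the desired formula. The main obstacle is the combinatorial calculation of $(B^{\otimes i})^{S_i}$: one needs a natural decomposition into divided-power summands $\Gamma^j(I_B)$ compatible with the augmentation-induced transition maps, so that the transitions really are identified with the projections killing the top piece $\Gamma^{i+1}$. Once this bookkeeping is set up correctly, the remaining steps (left-adjointness of $\mathrm{free}_*$, vanishing of higher cohomology on affines, Mittag–Leffler) are entirely formal.
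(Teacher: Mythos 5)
Your proposal is correct and follows essentially the same approach as the paper's proof: reduce to the case $S=S^0$ via left-adjointness of $\mathrm{free}_*$, identify the augmentation ideal of $S\otimes\spec(A)$ with $\hom_k(k[\overline S],I_A)$, and then compute $R\Gamma$ of the ind-affine scheme $\varinjlim_i \mathrm{Sym}^i(\spec B)$ as a derived inverse limit. The paper compresses the final step into ``whence the result''; your filling-in of the divided-power decomposition $(B^{\otimes i})^{S_i}=\bigoplus_{j\le i}\Gamma^j(I_B)$, the identification of the transition maps with projections, and the Mittag--Leffler argument is exactly what that phrase elides.
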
 
\begin{proof} 
By the universal property, the commutative monoid $S \otimes \mathrm{free}_*(\spec(A))$ can equivalently 
be described as $\free_*( S \otimes \spec(A))$. 
Now $S \otimes \spec(A)$, the tensor of $S$ with $\spec(A)$ in the category of pointed affine finite type
$k$-schemes, is exactly the spectrum of the limit of the diagram
\[ \xymatrix{ 
& & k \ar[d]  \\
& A^S  \ar[d] \ar[r] & A  \\
k \ar[r] &  k^S
},\]
whose augmentation ideal is exactly $\hom_{k}(
k[\overline{S}], I_A)$. 
It follows (cf.~\Cref{commmonoid}) that 
$S \otimes \mathrm{free}_*(\spec(A))$
is computed in 
$\mathrm{Ind}( \mathrm{AffSch}^{\ast}_{k, \mathrm{ft}})$ as the filtered colimit of
$\sym^i(\spec ( k \oplus \hom_{k}( k[\overline{S}],
I_A)))$, whence the result. 
\end{proof} 

In the next results, we let $x$ denote the coordinate on
$\mathbb{A}^1_{k}= \spec k[x]$, and let
$k[x]^+$ denote the augmentation ideal $(x) \subset k[x]$.  
The following result 
is a consequence of \Cref{forallA} for the pointed affine line. 
\begin{proposition} 
For any pointed finite set $S$ and $k$-vector space $V$, there is a natural isomorphism (with notation as
in \Cref{forallA})
\begin{equation}  \label{fnsonwrat} R \Gamma( S \otimes W_{\mathrm{rat}}^+,
\mathbb{G}_a \otimes_k V) \simeq
 \prod_{i \geq 0} \Gamma^i (\hom_{k}( k[\overline{S}],  
k[x]^+)) \otimes_k V.
\end{equation}
\end{proposition}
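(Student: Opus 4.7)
The plan is to derive this formula as a direct specialization of \Cref{forallA} to the case $A = k[x]$, the polynomial algebra in one variable equipped with the augmentation sending $x$ to $0$.

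First I would verify that the hypotheses of \Cref{forallA} apply in this case: $k[x]$ is an augmented $k$-algebra of finite type, its augmentation ideal is precisely $I_{k[x]} = (x) = k[x]^+$ in the notation of the preceding proposition, and $\spec(k[x]) = \mathbb{A}^1_k$ as a pointed affine $k$-scheme with basepoint at the origin.

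Next, by the very construction of $W_{\mathrm{rat}}^+$, there is a tautological identification $W_{\mathrm{rat}}^+ = \free_*(\mathbb{A}^1_k) = \free_*(\spec k[x])$ in $\mathrm{Ind}(\mathrm{AffSch}^\ast_{k,\mathrm{ft}})$, which passes to sheaves on $\sch_k$ via the functor \eqref{functor}. Substituting $A = k[x]$ (so that $\spec(A) = \mathbb{A}^1_k$ and $I_A = k[x]^+$) into the isomorphism of \Cref{forallA} then reproduces the desired formula verbatim. There is no substantial obstacle here; the current proposition is essentially a named special case of the preceding one, with the identifications between $\free_*(\mathbb{A}^1_k)$ and $W_{\mathrm{rat}}^+$, and between the augmentation ideal of $k[x]$ and $k[x]^+$, made explicit.
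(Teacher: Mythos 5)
Your proposal is correct and matches the paper exactly: the paper states this result as a direct consequence of \Cref{forallA} applied to $A = k[x]$, identifying $W_{\mathrm{rat}}^+ = \free_\ast(\mathbb{A}^1_k)$ by construction and $I_{k[x]} = k[x]^+$. There is nothing more to it.
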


\begin{proposition} 
For any pointed finite set $S$, and $k$-vector space $V$, there
is a natural isomorphism (with notation as in \Cref{forallA})
\begin{equation}  
\label{Wbigfunctions}
R \Gamma(S \otimes W^{\mathrm{big}}, \mathbb{G}_a \otimes_k V) = \bigoplus_{i
\geq 0} \Gamma^i (\hom_{k}( k[\overline{S}],  
k[x]^+)) \otimes_k V. \end{equation}
With this identification, the map $S \otimes W^{\mathrm{rat}} \to S \otimes
W^{\mathrm{big}}$
induces the obvious sum-to-product map on $R \Gamma(-, \mathbb{G}_a \otimes_k V)$. 
\end{proposition}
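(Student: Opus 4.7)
The plan is to recognize $S \otimes W^{\mathrm{big}}$ as an explicit affine scheme, compute its \'etale cohomology directly, and then match the answer with the asserted divided-power expression; the comparison with $W^{\mathrm{rat}}$ will follow by naturality.

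Since $W^{\mathrm{big}}$ is an abelian group object in sheaves, tensoring with the pointed finite set $S$ in pointed commutative monoids---equivalently in sheaves of abelian groups, where finite coproducts agree with finite products---gives $S \otimes W^{\mathrm{big}} = (W^{\mathrm{big}})^m$ with $m = |\overline{S}|$. This is represented by the affine scheme $\spec B$ with
\[ B = k[\, y_j^{(l)} \,:\, j \geq 1,\ 1 \leq l \leq m\,], \]
augmented by the origin. Since the scheme is affine, higher \'etale cohomology of $\mathbb{G}_a \otimes_k V$ vanishes, so $R\Gamma(S \otimes W^{\mathrm{big}}, \mathbb{G}_a \otimes_k V) = B \otimes_k V$, and the task reduces to identifying $B$.

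For $m=1$, I use $W^{\mathrm{big}} = \varprojlim_n W_n^{\mathrm{big}}$ as sheaves, together with the isomorphism of pointed affine schemes $W_n^{\mathrm{big}} \cong \mathrm{Sym}^n(\mathbb{A}^1_k)$ coming from Vi\`ete's formulas (compatible with the inclusion $W_{\mathrm{rat}}^+ \hookrightarrow W^{\mathrm{big}}$). The vector-space computation used in the proof of \Cref{forallA}---expressing $(k[x]^{\otimes n})^{S_n}$ as $\bigoplus_{j=0}^n \Gamma^j(k[x]^+)$ via the orbit decomposition of $(k \oplus k[x]^+)^{\otimes n}$ under $S_n$---yields $k[y_1, \ldots, y_n] \cong \bigoplus_{j=0}^n \Gamma^j(k[x]^+)$ compatibly with transition maps, which are inclusions of truncations on both sides. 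Passing to the filtered colimit in $n$ gives $k[y_1, y_2, \ldots] \cong \bigoplus_{j \geq 0} \Gamma^j(k[x]^+)$. For general $m$, $B$ is the $m$-fold tensor product of this, and the divided-power exponential identity $\Gamma^i(V_1 \oplus \cdots \oplus V_m) = \bigoplus_{i_1 + \cdots + i_m = i} \Gamma^{i_1}(V_1) \otimes \cdots \otimes \Gamma^{i_m}(V_m)$ rearranges the result into $\bigoplus_i \Gamma^i((k[x]^+)^{\oplus m}) = \bigoplus_i \Gamma^i(\hom_k(k[\overline{S}], k[x]^+))$.

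For the last sentence of the proposition, the map $S \otimes W^{\mathrm{rat}} \to S \otimes W^{\mathrm{big}}$ is the $m$-fold product of the natural inclusion, so on $R\Gamma(-, \mathbb{G}_a \otimes_k V)$ it is the $m$-fold tensor product of the $m=1$ comparison map. For $m=1$, the induced map of rings of functions is the inclusion $k[y_1, y_2, \ldots] \hookrightarrow \varprojlim_n k[y_1, \ldots, y_n]$ sending a polynomial $f$ to the tower of its truncations, which under the divided-power identifications above reads as the natural inclusion $\bigoplus_j \Gamma^j(k[x]^+) \hookrightarrow \prod_j \Gamma^j(k[x]^+)$. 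Tensoring yields the sum-to-product inclusion for general $m$. The only genuine bookkeeping is checking compatibility of all these identifications with the inverse and direct systems; the essential combinatorial input is the divided-power exponential law, which organizes the multigraded pieces identically in the $\bigoplus_i$ and $\prod_i$ forms.
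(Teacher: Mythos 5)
The reduction to $m=1$ via $S\otimes W^{\mathrm{big}}=(W^{\mathrm{big}})^m$ and the exponential law for divided powers is fine, as is the identification $W^{\mathrm{big}}=\varprojlim_n W_n^{\mathrm{big}}$ with $W_n^{\mathrm{big}}\cong\mathrm{Sym}^n(\mathbb{A}^1_k)$. The gap is in the sentence claiming that the transition maps of the direct system $\Gamma^n(k[x])\to\Gamma^{n+1}(k[x])$ (dual to truncation $W_{n+1}^{\mathrm{big}}\to W_n^{\mathrm{big}}$) "are inclusions of truncations on both sides," i.e.\ that under the decomposition $\Gamma^n(k[x])=\bigoplus_{j\le n}\Gamma^j(k[x]^+)$ they become the inclusions of the first $n+1$ summands. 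This is false. Take $e_1e_2\in\Gamma^2(k[x])$: in the $\mathrm{Sym}^2$-picture it is $x_1^2x_2+x_1x_2^2$, which lies purely in $\Gamma^2(k[x]^+)$; but its image in $\Gamma^3(k[x])$ is
$(x_1+x_2+x_3)(x_1x_2+x_1x_3+x_2x_3)=\sum_{i\neq j}x_i^2x_j+3x_1x_2x_3$,
whose $\Gamma^3(k[x]^+)$-component is $3e_3\neq 0$ whenever $3\neq 0$ in $k$. (One can find analogous examples for every $p$, e.g.\ $e_1e_n\mapsto(n+1)e_{n+1}+\cdots$.) So the colimit $\varinjlim_n\Gamma^n(k[x])$ does not carry a $\Gamma^j$-grading inherited termwise from the finite stages, and "passing to the filtered colimit" does not produce the asserted direct-sum decomposition by this route.

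The paper sidesteps exactly this issue by introducing the $\mathbb{G}_m$-weight grading. The transition map $\Gamma^n(k[x])\to\Gamma^{n+1}(k[x])$ \emph{is} weight-preserving (both sides have a natural $\mathbb{G}_m$-action compatible with the truncations of $W^{\mathrm{big}}$), and it is an isomorphism in weights $\le n$. Since $\Gamma^j(k[x]^+)$ sits in weights $\ge j$, each fixed weight involves only finitely many divided-power summands; passing to the limit in graded vector spaces then identifies $\mathcal{O}(W^{\mathrm{big}})$ with the direct sum $\bigoplus_j\Gamma^j(k[x]^+)$, while $\mathcal{O}(W_{\mathrm{rat}}^+)$ is the product. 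The student's appeal to the $\Gamma^j$-decomposition being stable under the transition maps needs to be replaced by this weight argument (or some equivalent device), which is the genuine content of the proposition; without it the proof does not go through.
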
 

\begin{proof} 
Since $S \otimes W^{\mathrm{big}}$ is affine, we may assume $V = k$
throughout. 
The map in question is the map 
$$\varinjlim_n  \sym^n( S \otimes \mathbb{A}^1_k ) \to S \otimes W^{\mathrm{big}}.$$
Note that both sides carry $\mathbb{G}_m$-actions. 
We claim that the map 
$\sym^n( S \otimes \mathbb{A}^1_k ) \to S \otimes W^{\mathrm{big}}$ (of affine schemes)
induces an isomorphism in weights $\leq n$ on functions, i.e., $R \Gamma(-,
\mathbb{G}_a)$.
It follows that functions on $S \otimes W^{\mathrm{big}}$ have a natural grading, and
are the inverse limit (over $n$) in graded vector spaces of functions on $\sym^n(
S \otimes \mathbb{A}^1_k)$. From this, the result easily follows. 
To see the claim, we may reduce to the case where $S = \left\{\ast, 1\right\}$
(via taking tensor products), for which we need to see that $\mathrm{Sym}^n (
\mathbb{A}^1_k) \to W^{\mathrm{big}}$ induces an isomorphism in weights $ \leq n$ on
functions. 
This is evident given the explicit description of functions on
$W^{\mathrm{big}}$ as the polynomial ring in all symmetric functions;
alternatively, $\mathrm{Sym}^n( \mathbb{A}^1_k) \to W^{\mathrm{big}}$ is the
natural transformation 
that sends $R$ to the map 
$(1 + t R[t])_{\leq n} \to 1 + t R[[t]]$. 
\end{proof} 

Given a pointed simplicial set $S_\bullet$, we can (by taking geometric
realizations) consider $S_\bullet \otimes W_{\mathrm{rat}}^+, S_\bullet \otimes
W^{\mathrm{big}}$ as sheaves of animated commutative monoids. 

\begin{proposition} 
\label{functionsonGa}
For any simplicial pointed finite set $S_\bullet$ which is connected and any
$k$-vector space $V$, the map 
$S_\bullet \otimes W_{\mathrm{rat}}^+  \to S_\bullet \otimes W^{\mathrm{big}}$ 
induces an equivalence on $R \Gamma( -, \mathbb{G}_a \otimes_k V)$.  
\end{proposition}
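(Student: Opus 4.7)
The plan is to combine the levelwise identifications \eqref{fnsonwrat} and \eqref{Wbigfunctions} with a connectivity estimate that comes from the connectedness hypothesis on $S_\bullet$. By construction, $S_\bullet \otimes W_{\mathrm{rat}}^+$ is the geometric realization of the simplicial object $[n] \mapsto S_n \otimes W_{\mathrm{rat}}^+$, and similarly for $W^{\mathrm{big}}$. Applying $R\Gamma(-, \mathbb{G}_a \otimes_k V)$ converts realization to totalization, so by \eqref{fnsonwrat} and \eqref{Wbigfunctions} the comparison map becomes the totalization of the cosimplicial sum-to-product inclusion
\[
\bigoplus_{i \geq 0} \Gamma^i\bigl(\hom_k(k[\overline{S_\bullet}], k[x]^+)\bigr) \otimes_k V \;\longrightarrow\; \prod_{i \geq 0} \Gamma^i\bigl(\hom_k(k[\overline{S_\bullet}], k[x]^+)\bigr) \otimes_k V.
\]
The point is that the cosimplicial structure and the comparison map both respect the grading by $i$, so everything decomposes into weight pieces.

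Since totalization commutes with products, the right side totalizes to $\prod_i B_i \otimes_k V$, where
\[
B_i \;=\; \mathrm{Tot}\bigl([n] \mapsto \Gamma^i(\hom_k(k[\overline{S_n}], k[x]^+))\bigr).
\]
The connectedness of $S_\bullet$ means that the reduced simplicial $k$-vector space $k[\overline{S_\bullet}]$ has vanishing $H_0$, hence (via Dold--Kan) corresponds to a chain complex concentrated in homological degrees $\geq 1$. Dually, $\mathrm{RHom}_k(k[\overline{S_\bullet}], k[x]^+)$ lies in cohomological degrees $\geq 1$. Because $\Gamma^i$ is a polynomial functor of degree $i$, its derived value on any complex concentrated in cohomological degrees $\geq 1$ is concentrated in cohomological degrees $\geq i$; this gives $B_i \in \mathcal{D}(k)^{\geq i}$ (with $B_0 = k$ in degree $0$).

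With this estimate in hand, the map $\bigoplus_i B_i \to \prod_i B_i$ is an equivalence, since in each fixed cohomological degree $n$ only the finitely many indices $i \leq n$ contribute. The same connectivity also forces $\mathrm{Tot}\bigl(\bigoplus_i \Gamma^i(\hom_k(k[\overline{S_\bullet}], k[x]^+))\bigr) \simeq \bigoplus_i B_i$, because any class in a fixed cohomological degree is supported on a finite partial sum, for which totalization and direct sum trivially commute. Chaining these identifications yields the claimed equivalence. The main obstacle is justifying the interchanges of $\Gamma^i$ with totalization and of totalization with the infinite direct sum; both are ultimately enabled by the connectivity of $k[\overline{S_\bullet}]$, which is precisely the content of the connectedness hypothesis on $S_\bullet$.
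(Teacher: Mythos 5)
Your argument is correct and follows essentially the same route as the paper: pass to totalizations, invoke the levelwise formulas \eqref{fnsonwrat} and \eqref{Wbigfunctions}, and use the connectivity bound $\Gamma^i(W) \in \mathcal{D}(k)^{\geq i}$ for $W \in \mathcal{D}(k)^{\geq 1}$ (coming from connectedness of $S_\bullet$) to see that the sum-to-product map is an equivalence. One small imprecision: for commuting the infinite direct sum past $\mathrm{Tot}$, the connectivity of the $B_i$ is not really the relevant input; what one actually uses is that the cosimplicial objects are levelwise coconnective (indeed discrete), so that $H^m$ of any totalization is already computed by a finite partial totalization $\mathrm{Tot}^{m+1}$, which, being a finite limit, commutes with filtered colimits. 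This is precisely the fact the paper cites (``infinite direct sums commute with totalizations in the coconnective derived $\infty$-category''), and the connectivity of the $B_i$ is needed only for the subsequent sum-to-product comparison.
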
 
\begin{proof} 
Recall that both infinite direct sums and direct products commute with
totalizations in the coconnective derived $\infty$-category. 
Therefore, we find that 
the equations \eqref{fnsonwrat} and 
\eqref{Wbigfunctions} hold for $R \Gamma(S_\bullet \otimes W_{\mathrm{rat}}^+,
\mathbb{G}_a \otimes_k V), 
R \Gamma(S_\bullet \otimes W^{\mathrm{big}},
\mathbb{G}_a \otimes_k V)$ with $k[\overline{S}]$ replaced by reduced singular
chains on $S_\bullet$, i.e., $\overline{C_*}(S_\bullet; k)$ (and with
the derived functors of the divided powers). 
Now the result follows because if $U \in \mathcal{D}(k)^{\geq 1}$,
then $\Gamma^i U \in \mathcal{D}(k)^{\geq i}$; for example, this
follows by duality (and passage to filtered colimits) for the corresponding
connectivity assertion for nonabelian derived symmetric powers in the
connective case, cf.~\cite[Prop.~25.2.4.1]{SAG}. 
Therefore, because of the connectivity bounds, the map from the direct sum to
the direct product 
(i.e., from the analogs of \eqref{Wbigfunctions} and \eqref{fnsonwrat}) is an
equivalence. 
\end{proof} 

\begin{proof}[Proof of \Cref{mainsubsectionthm}]
We use throughout that $B W_{\mathrm{rat}}^+ = BW_{\mathrm{rat}}$ since
$W_{\mathrm{rat}}$ is the group completion of the commutative monoid
$W_{\mathrm{rat}}^+$; this is a special case of the group-completion
theorem \cite{McDS}. 
From \Cref{functionsonGa} (applied to a simplicial model of a wedge of circles), 
it follows that for any finite pointed set $T$, 
the map of classifying stacks $B( T \otimes W_{\mathrm{rat}}^+) \to B( T \otimes
W^{\mathrm{big}})$ induces an equivalence on $R\Gamma(-, \mathbb{G}_a \otimes_k
V)$. 
Note that both 
$B( T \otimes W_{\mathrm{rat}}^+) \to B( T \otimes
W^{\mathrm{big}})$
are abelian group objects. 
By the functorial Breen--Deligne resolution \cite[Lec.~4]{condensed}, there is a resolution of
$W_{\mathrm{rat}}[1]$ in $\mathcal{D}(\sch_{k})$ all of whose terms 
are finite direct sums of the form $\mathbb{Z}[ B( T \otimes W_{\mathrm{rat}}^+) ]$ for various
finite pointed sets $T$, and an analogous resolution of 
$W^{\mathrm{big}}[1]$ all of whose terms are of the form 
 $\mathbb{Z}[ B( T \otimes W^{\mathrm{big}}) ]$ (with the same finite sets and finite direct
 sums appearing). Since $\rhom_{\mathcal{D}(\sch_{k})}(-,
 \mathbb{G}_a \otimes_k V)$ carries the map 
 $\mathbb{Z}[ B( T \otimes W_{\mathrm{rat}}^+) ] \to 
\mathbb{Z}[ B( T \otimes W^{\mathrm{big}}) ]$ to an equivalence by
\Cref{functionsonGa}, the
result follows. 
\end{proof} 

\subsection{Proof of Breen's theorem}

\newcommand{\schpph}{\mathrm{Sch}^{\mathrm{perf,} v}}

\begin{construction}[The $v$-site of perfect schemes] 
Let $\schpph_{\mathbb{F}_p}$ denote the site of qcqs perfect
$\mathbb{F}_p$-schemes, equipped with the $v$-topology (or universally
subtrusive topology) of \cite{Rydh, BSWitt}. Note that this is a subcanonical
topology since we are working with perfect schemes, cf.~\cite[Th.~5.17]{BMarc}. 
In particular, the additive $\mathbb{G}_a$ is a sheaf on
$\schpph_{\mathbb{F}_p}$, with trivial higher cohomology on affines, 
cf.~\cite[Th.~11.2(2)]{BSWitt}. 

We have a morphism of sites
$\eta: \schpph_{\mathbb{F}_p} \to \sch_{\mathbb{F}_p}$  arising from the
inverse limit perfection functor $(-)^{\mathrm{perf}}: \sch_{\mathbb{F}_p} \to
\schpph_{\mathbb{F}_p}$, inducing a geometric
morphism of topoi. 
\end{construction}

\begin{remark}[Examples of $\eta$-pullbacks] 
First, $\eta^* W^{\mathrm{big}}$ is simply (by pulling back the representing
object) the big Witt vector functor on
$\schpp_{\mathbb{F}_p}$, which is already a $v$-sheaf. 
Second, note that $\eta^* W_{\mathrm{rat}}^+$ is the functor carrying a perfect $\mathbb{F}_p$-algebra $R$ to $1 + tR[t]$,
considered as a monoid under multiplication; this is already a $v$-sheaf
(since as a set it is a
filtered colimit of finite products of copies of $R$). 
\end{remark} 

The next result, which is closely related to \cite[Lem.~5.16]{SuslinVoevodsky},
states that quotients of affine schemes by finite group actions are actually
quotients in the $v$-topology. 
\begin{proposition} 
Let $R$ be a perfect $\mathbb{F}_p$-algebra with an action of a finite group
$G$; let $Y = \spec(R)$ and $X = \spec(R^G)$. 
Then the map $Y \to X$ exhibits, on representables $h_{Y}
\to h_{X}$, the target as the quotient by $G$ in the category of
sheaves of sets on $\schpph_{\mathbb{F}_p}$. 
\label{quotienthsheafofsets}
\end{proposition}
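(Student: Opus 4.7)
The plan is to show that the canonical map $h_Y / G \to h_X$ of sheaves of sets on $\schpph_{\FF_p}$ is an isomorphism, where $h_Y/G$ denotes the sheaf coequalizer of the action and projection maps $G \times h_Y \rightrightarrows h_Y$. Note first that $R^G$ is perfect (since the $G$-action commutes with Frobenius), so $X = \spec R^G$ does lie in $\schpph_{\FF_p}$. The argument splits into two claims: $(a)$ the map $h_Y \to h_X$ is an epimorphism of $v$-sheaves, and $(b)$ the canonical map
\[
\alpha \,:\, \bigsqcup_{g \in G} Y \longrightarrow Y \times_X Y, \qquad \alpha|_{\{g\}\times Y} = (\mathrm{id}_Y, g\cdot -),
\]
is a $v$-cover, where the fiber product is taken in $\schpph_{\FF_p}$ (so $Y \times_X Y = \spec((R \otimes_{R^G} R)^{\mathrm{perf}})$). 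Granting these, pulling $\alpha$ back along any section $(y_1, y_2): T \to Y \times_X Y$ yields a $v$-cover $T' \twoheadrightarrow T$ with a decomposition $T' = \bigsqcup_{g \in G} T'_g$ such that $y_2|_{T'_g} = g \cdot y_1|_{T'_g}$ on each piece; together with $(a)$ this exhibits $h_X$ as the sheaf quotient $h_Y / G$.

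For $(a)$: each $r \in R$ satisfies the monic equation $\prod_{g \in G}(t - gr) = 0$ over $R^G$, so $R^G \hookrightarrow R$ is integral and $Y \to X$ is a surjective integral map, hence universally closed and universally surjective, hence universally submersive, hence a $v$-cover in the sense of \cite{Rydh, BSWitt}.

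For $(b)$, which is the main technical obstacle: $\alpha$ is an integral morphism of affine perfect schemes, because $\prod_G R$ is finite free over the diagonal copy of $R$, and this $R$ sits inside $(R \otimes_{R^G} R)^{\mathrm{perf}}$ via either factor. An integral affine map is a $v$-cover iff it is surjective on underlying topological spaces; since perfection is a universal homeomorphism, this reduces to the surjectivity of $\bigsqcup_{g \in G} \spec R \to \spec(R \otimes_{R^G} R)$. Fix a prime $\mathfrak{P} \subset R \otimes_{R^G} R$ with contractions $\mathfrak{q}_1, \mathfrak{q}_2 \subset R$ to the two factors (both lying over the same $\mathfrak{p} \subset R^G$); I must produce $g \in G$ with $\ker \mu_g \subset \mathfrak{P}$, where $\mu_g(a \otimes b) = g(a) b$, equivalently with $\phi_1 = \phi_2 \circ g$ for the two induced injections $\phi_1, \phi_2 : R \to D := (R \otimes_{R^G} R)/\mathfrak{P}$ given by $a \mapsto [a \otimes 1]$ and $a \mapsto [1 \otimes a]$. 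By the classical transitivity of the $G$-action on primes over $\mathfrak{p}$ (Bourbaki, Commutative Algebra, V.\S2), there is $g_0 \in G$ with $g_0 \mathfrak{q}_1 = \mathfrak{q}_2$; then $\phi_1$ and $\phi_2 \circ g_0$ descend to two injections $R/\mathfrak{q}_1 \hookrightarrow D$ agreeing on $R^G/\mathfrak{p}$. Since $R$ (hence $R/\mathfrak{q}_1$) is perfect, the fraction field $K := \mathrm{Frac}(R/\mathfrak{q}_1)$ is perfect, and the extension $K$ over $k := \mathrm{Frac}(R^G/\mathfrak{p})$ is algebraic, normal, and separable, hence Galois. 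The two resulting $k$-embeddings $K \hookrightarrow \mathrm{Frac}(D)$ thus differ by an element $\sigma \in \mathrm{Gal}(K/k)$, which lifts to some $h$ in the decomposition group $D_{\mathfrak{q}_1} \subset G$ via the classical surjection $D_{\mathfrak{q}_1} \twoheadrightarrow \mathrm{Gal}(K/k)$ of decomposition onto residue-field Galois group. Setting $g = g_0 h$ gives $\phi_1 = \phi_2 \circ g$, completing the verification of $(b)$ and hence of the proposition.
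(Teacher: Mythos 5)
Your argument is correct and follows essentially the same route as the paper's: reduce to showing that $Y \to X$ and $G \times Y \to Y \times_X Y$ are $v$-covers, each being surjective and integral. Where the paper cites Bourbaki V.\S2.2 Th.~2 outright for the surjectivity of the second map, you spell out the verification in detail; note, though, that invoking perfectness to get $K/k$ separable (hence Galois) is unnecessary, since Bourbaki already gives $K/k$ normal, and any two $k$-embeddings of a normal algebraic extension into a common overfield have the same image and hence differ by an element of $\mathrm{Aut}(K/k)$, so the same argument works without the perfectness hypothesis.
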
 
\begin{proof} 
Since $R^G \subset R$ is an integral extension, 
the map of schemes $Y \to X$ is integral and surjective, hence a $v$-cover
\cite[Rem.~2.5]{Rydh}.  
It follows that the map $h_Y/G \to h_X$ (of sheaves of sets) is surjective. 
To see that $h_{Y}/G \to h_X$ is an isomorphism, it suffices to show that the map 
$G \times Y \to Y \times_X Y$
is a $v$-cover, but this follows because it is also a surjective integral map
by \cite[Th.~2, Sec.~V.2.2]{BourbakiCA}. 
\end{proof} 

\begin{proposition} 
\label{wratfreecommonoid}
The object 
$\eta^* W_{\mathrm{rat}}^+ \in \mathrm{Shv}(
\schpph_{\mathbb{F}_p}, \mathrm{CMon})$ is the free commutative monoid  on
the pointed object $h_{\mathbb{A}^{1, \mathrm{perf}}}$ (with basepoint at the
origin). That is, for any sheaf of commutative monoids 
$M$ on 
$\schpph_{\mathbb{F}_p}$, we have
\[ \hom_{\mathrm{Shv}(
\schpph_{\mathbb{F}_p}, \mathrm{CMon})} ( \eta^* W_{\mathrm{rat}}^+, M ) \simeq
\hom_{\mathrm{Shv}(
\schpph_{\mathbb{F}_p}, \mathrm{Set}_{\ast})} ( h_{\mathbb{A}_{\mathbb{F}_p}^{1,
\mathrm{perf}}}, M )
.  \]
\end{proposition}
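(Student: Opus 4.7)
The plan is to verify that the defining filtered colimit presentation $W_{\mathrm{rat}}^+ = \varinjlim_i \mathrm{Sym}^i(\mathbb{A}^1_{\mathbb{F}_p})$ from $\mathrm{Ind}(\mathrm{AffSch}^{\ast}_{\mathbb{F}_p, \mathrm{ft}})$ survives passage to $v$-sheaves on $\schpph_{\mathbb{F}_p}$, and then to recognize the resulting presentation as the standard realization of a free pointed commutative monoid.

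First I would recall the general formula: for a pointed object $(X, \ast)$ in any sufficiently cocomplete pointed category (such as sheaves of pointed sets on $\schpph_{\mathbb{F}_p}$), the free commutative monoid on $(X, \ast)$, where the basepoint is sent to the monoid unit, can be written as $\varinjlim_i \mathrm{Sym}^i(X)$, the transition maps being induced by the basepoint $\ast \to X$. Here $\mathrm{Sym}^i(X) = X^i / S_i$ denotes the categorical quotient in the ambient category, i.e.\ the sheafification of the presheaf quotient.

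The key step is to identify these sheaf-theoretic symmetric powers of $h_{\mathbb{A}^{1,\mathrm{perf}}}$ with their schematic counterparts. Concretely, I want to show that the $v$-sheaf quotient $(h_{\mathbb{A}^{1,\mathrm{perf}}})^i / S_i$ is exactly the representable sheaf of $\mathrm{Sym}^i(\mathbb{A}^{1,\mathrm{perf}}) = \spec(\mathbb{F}_p[x_1, \dots, x_i]^{\mathrm{perf}})^{S_i} = \spec(\mathbb{F}_p[s_1,\dots,s_i]^{\mathrm{perf}})$. This is precisely what \Cref{quotienthsheafofsets} provides, applied to the natural $S_i$-action on $R = \mathbb{F}_p[x_1,\dots,x_i]^{\mathrm{perf}}$ with invariants $R^{S_i} = \mathbb{F}_p[s_1,\dots,s_i]^{\mathrm{perf}}$. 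Granting this identification, the transition maps on each side agree: the origin is $S_i$-fixed, and the inclusion $\mathrm{Sym}^{i-1}(\mathbb{A}^{1,\mathrm{perf}}) \hookrightarrow \mathrm{Sym}^i(\mathbb{A}^{1,\mathrm{perf}})$ obtained by adjoining a zero coordinate is exactly the basepoint transition in the general free-commutative-monoid construction. Hence $\eta^* W_{\mathrm{rat}}^+$ coincides with the filtered colimit presentation of the free commutative monoid on $(h_{\mathbb{A}^{1,\mathrm{perf}}}, 0)$ in pointed $v$-sheaves, and the universal property in the statement follows formally.

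The main obstacle is the identification of the sheaf-theoretic symmetric power with the schematic one, i.e.\ controlling the quotient of a representable by a finite group action. This is essentially outsourced to \Cref{quotienthsheafofsets}, whose proof in turn relies on the fact that integral surjections are $v$-covers and on the description of geometric points of invariants as orbits. A minor bookkeeping issue is to thread the pointed structure through correctly, but since the origin is an $S_i$-fixed point and the transition maps in both presentations are induced by basepoint inclusion, this causes no trouble.
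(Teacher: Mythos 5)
Your proof is correct and takes essentially the same approach as the paper: both identify $\eta^* W_{\mathrm{rat}}^+$ as the filtered colimit of representables $h_{\mathrm{Sym}^i(\mathbb{A}^{1,\mathrm{perf}})}$, invoke \Cref{quotienthsheafofsets} to equate these schematic symmetric powers with the $v$-sheaf-theoretic quotients $(h_{\mathbb{A}^{1,\mathrm{perf}}})^i/S_i$, and recognize the resulting colimit as the standard presentation of the free pointed commutative monoid. Your write-up just spells out the basepoint/transition-map bookkeeping that the paper leaves implicit.
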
 
\begin{proof} 
The $v$-sheaf $\eta^*W^+_{\mathrm{rat}}$ is the 
filtered colimit $\varinjlim h_{\mathrm{Sym}^i( \mathbb{A}^{1,
\mathrm{perf}}_{\mathbb{F}_p})}$. However, these symmetric powers can be either
taken at the level of schemes or at the level of $v$-sheaves of sets 
thanks to \Cref{quotienthsheafofsets}, whence the result since the latter
construction gives the free commutative monoid sheaf on $h_{\mathbb{A}^{1,
\mathrm{perf}}_{\mathbb{F}_p}}$. 
\end{proof}

\begin{proof}[Proof of \Cref{Breenthmappendix}] 
The claim for $i = 0$ is straightforward (cf.~\cite[\S 1.4]{Breen}) to deduce
from the claim about endomorphisms of $\mathbb{G}_a$ on all
$\mathbb{F}_p$-algebras, so we treat the vanishing for $i > 0$. 
The 
derived pushforward of $\mathbb{G}_a$ from the $v$-site to the \'etale site on
perfect schemes is
simply 
$\mathbb{G}_a$ thanks to \cite[Th.~11.2(2)]{BSWitt}. 
Thus, by adjunction, 
it suffices to show that 
$\rhom_{\mathcal{D}(\schpph_{\mathbb{F}_p}, \mathbb{F}_p)}(\mathbb{G}_a,
\mathbb{G}_a)$ is concentrated in cohomological degree zero. 
We will show that 
$\rhom_{\mathcal{D}(\schpph_{\mathbb{F}_p})}(W^{\mathrm{big}}, \mathbb{G}_a)$ is concentrated in
degree zero. This will suffice to prove the result, because  $W$ is a retract of $W^{\mathrm{big}}$ and $\mathbb{G}_a =
W/p$ on 
$\schpph_{\mathbb{F}_p}$.

By \Cref{wratfreecommonoid} and group completion,
we find that $\eta^* W_{\mathrm{rat}}(-)$ is the free abelian group sheaf on 
$\mathbb{A}^{1, \mathrm{perf}}_{\mathbb{F}_p}$ with the origin as zero. 
Thus, also 
using \cite[Th.~11.2(2)]{BSWitt}
again, 
we have that 
$$\rhom_{\mathcal{D}( \schpph_{\mathbb{F}_p})}(\eta^* W_{\mathrm{rat}},
\mathbb{G}_a) = \mathrm{fib} \left( R \Gamma_v( \mathbb{A}^{1, {\mathrm{perf}}}_{\mathbb{F}_p},
\mathbb{G}_a) \to \mathbb{F}_p\right) = 
\left\{f \in \mathbb{F}_p[x^{1/p^\infty}], f(0) = 0\right\}
$$ is discrete. 
Thus, it suffices to show that the map 
$\eta^* W_{\mathrm{rat}} \to \eta^* W^{\mathrm{big}} = W^{\mathrm{big}}$ 
induces an equivalence on $\rhom_{\mathcal{D}( \schpph_{\mathbb{F}_p})}( -,
\mathbb{G}_a)$. 
But this follows because 
the map $W_{\mathrm{rat}} \to W^{\mathrm{big}}$ induces an equivalence on
$\rhom_{\mathcal{D}(\sch_{\mathbb{F}_p})}( -, \eta_*\mathbb{G}_a)$, noting that
$\eta_* \mathbb{G}_a = \varinjlim_{\phi} \mathbb{G}_a$
(again by \cite[Th.~11.2(2)]{BSWitt})
and using
\Cref{mainsubsectionthm} (applied with $V$ a countably-dimensional
$\mathbb{F}_p$-vector space, since $\varinjlim_{\phi} \mathbb{G}_a$ is a
mapping cone of an endomorphism of $\bigoplus_{i = 0}^\infty \mathbb{G}_a$). 
\end{proof}

We also record an equivalent form of \Cref{Breenthmappendix}, where the 
extension groups are taken in the big \'etale site (rather than the perfect
site). 
For this, we consider the inverse limit perfection $\varprojlim_{\phi}
\mathbb{G}_a$ in 
$\mathcal{D}( \sch_{\mathbb{F}_p})$ and calculate maps into $\mathbb{G}_a$. 

\begin{corollary} 
\label{variantBreenthm}
Let $R$ be a perfect $\mathbb{F}_p$-algebra and let $I$ be any set. 
We have 
$$\rhom_{\mathcal{D}( \sch_R, \mathbb{F}_p)}( \prod_I
\varprojlim_\phi \mathbb{G}_a, \mathbb{G}_a) = \bigoplus_I R[F^{\pm 1}],$$ where
the products are calculated in 
$\mathcal{D}( \sch_R, \mathbb{F}_p)$. 
\end{corollary}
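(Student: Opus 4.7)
The plan is to reduce the big-site computation to Breen's theorem (\Cref{Breenthmappendix}) on the perfect site via the adjunction $\eta^* \dashv \eta_*$ used in its proof.

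First, I compute the variant with target $\varinjlim_\phi \mathbb{G}_a$ in place of $\mathbb{G}_a$. Using $\varinjlim_\phi \mathbb{G}_a = \eta_* \mathbb{G}_a$ (established in the proof of \Cref{Breenthmappendix}) and the adjunction $\eta^* \dashv \eta_*$,
\[
\rhom_{\mathcal{D}(\sch_R, \mathbb{F}_p)}\bigl(\prod_I \varprojlim_\phi \mathbb{G}_a,\, \varinjlim_\phi \mathbb{G}_a\bigr) = \rhom_{\mathcal{D}(\schpp_R, \mathbb{F}_p)}\bigl(\eta^* \prod_I \varprojlim_\phi \mathbb{G}_a,\, \mathbb{G}_a\bigr).
\]
Since $\varprojlim_\phi R' = R'$ for perfect $R'$, the restriction $\eta^*$ satisfies $\eta^* \varprojlim_\phi \mathbb{G}_a = \mathbb{G}_a$ on $\schpp_R$, and $\eta^*$ commutes with products, giving $\eta^*\prod_I \varprojlim_\phi \mathbb{G}_a = \prod_I \mathbb{G}_a$. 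By \Cref{Breenthmappendix} together with a Breen--Deligne-type decomposition of the product (using pseudo-coherence of $\mathbb{G}_a$ on $\schpp_R$), the right-hand side is $\bigoplus_I R[F^{\pm 1}]$.

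Next, I prove the \emph{replacement lemma}: for any $X \in \mathcal{D}(\sch_R, \mathbb{F}_p)$ on which Frobenius $\phi$ acts invertibly, the canonical map $\mathbb{G}_a \to \varinjlim_\phi \mathbb{G}_a$ induces an equivalence $\rhom_{\sch_R}(X, \mathbb{G}_a) \simeq \rhom_{\sch_R}(X, \varinjlim_\phi \mathbb{G}_a)$. The fiber of $\mathbb{G}_a \to \varinjlim_\phi \mathbb{G}_a$ is the sheafified nilradical $C$, the filtered colimit of the sheaves $C_n = \ker(\phi^n: \mathbb{G}_a \to \mathbb{G}_a)$; on each $C_n$, $\phi^n$ acts as zero, whereas on $\rhom_{\sch_R}(X, C_n)$ the action of $\phi$ (from either source or target, which coincide by naturality) is invertible. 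This contradiction forces $\rhom_{\sch_R}(X, C_n) = 0$.

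The corollary then follows by applying the replacement lemma to $X = \prod_I \varprojlim_\phi \mathbb{G}_a$, on which $\phi$ acts invertibly because it is the transition map of the defining tower of $\varprojlim_\phi \mathbb{G}_a$. The main obstacle is the propagation of the vanishing $\rhom_{\sch_R}(X, C_n) = 0$ to $\rhom_{\sch_R}(X, C) = 0$: since $X$ is not obviously compact in the big-site derived category, this requires either a finer pseudo-coherence-type property of $X$, or a direct argument exploiting the explicit structure of $C$ as the sheafified nilradical (e.g.\ that $C$ vanishes on reduced rings combined with an appropriate reducedness property of the sections of $X$).
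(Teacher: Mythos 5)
Your proposal takes a genuinely different route from the paper. The paper's proof instead changes the \emph{topology} on $\sch_R$ from \'etale to flat (which is permitted since $\mathbb{G}_a$ satisfies flat descent), which makes $\prod_I \varprojlim_\phi \mathbb{G}_a$ representable by the perfect affine scheme $\spec R[\{x_i^{1/p^\infty}\}]$; the Breen--Deligne resolution of that representable then reduces the RHom directly to the perfect \'etale site, where Breen's theorem applies. Your approach instead changes the \emph{target} to $\varinjlim_\phi \mathbb{G}_a = R\eta_* \mathbb{G}_a$ to exploit the adjunction (this first step is essentially correct, though note $\eta^*$ commutes with this particular product not because $\eta^*$ preserves infinite products in general, but because it is restriction to perfect schemes followed by sheafification, and the restricted product $\prod_I \mathbb{G}_a$ is already representable), and then tries to swap the target back using a Frobenius-nilpotency argument.

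There are two genuine gaps in the replacement lemma. First, the map $\mathbb{G}_a \to \varinjlim_\phi \mathbb{G}_a$ is \emph{not} an \'etale surjection (a $p$-th root does not exist \'etale-locally, e.g.\ for the coordinate $t$ on $\mathbb{A}^1_{\mathbb{F}_p}$), so its derived fiber is not the sheafified nilradical: it has an $H^1$ equal to the cokernel. The correct statement is that the fiber is $\varinjlim_n D_n$ with $D_n = \mathrm{fib}(\phi^n: \mathbb{G}_a \to \mathbb{G}_a)$, a two-term complex whose $H^0 = \ker(\phi^n)$ and $H^1 = \mathrm{coker}(\phi^n)$ are both killed by $\phi^n$; one then runs your Frobenius argument on each cohomology sheaf via the Postnikov tower (rather than asserting that $\phi^n$ is null as an endomorphism of $D_n$ itself, which need not hold). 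Second, the colimit-propagation issue you flag is a real step, but it is readily closed: $\prod_I \varprojlim_\phi \mathbb{G}_a$ is a representable abelian group object, hence pseudo-coherent by the Breen--Deligne resolution, so $\rhom(X, -)$ commutes with the filtered colimit of the uniformly bounded $D_n$. One further point worth making explicit: the naturality you invoke is that the absolute Frobenius is a natural transformation of the identity functor of $\mathcal{D}(\sch_R, \mathbb{F}_p)$, which is what forces precomposition by $\phi_X$ and postcomposition by $\phi_{C_n}$ to agree on RHom. With these repairs the argument works, but it is noticeably heavier than the paper's one-line topology swap, which sidesteps all the Frobenius-nilpotency bookkeeping.
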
 
\begin{proof} 
We may replace the \'etale topology on $\sch_R$ with the flat
topology since $\mathbb{G}_a$ satisfies flat descent; then
$\prod_I \varprojlim_{\phi} \mathbb{G}_a$ becomes a discrete object
(representable by a product of copies of the perfection of $\mathbb{G}_a$). 
Using the Breen--Deligne resolution \cite[Lec.~4]{condensed} again in the flat
site, one sees that 
$$\rhom_{\mathcal{D}( \sch_R, \mathbb{F}_p)}( \prod_I \varprojlim_{\phi} \mathbb{G}_a, \mathbb{G}_a) = 
\rhom_{\mathcal{D}( \schpp_R, \mathbb{F}_p)}( \prod_I
\mathbb{G}_a, \mathbb{G}_a)
= \bigoplus_I 
\rhom_{\mathcal{D}( \schpp_R, \mathbb{F}_p)}( \mathbb{G}_a, \mathbb{G}_a),$$
since (for the second claim) $\mathbb{G}_a$-cohomology carries cofiltered limits of qcqs schemes along
affine transition maps to filtered colimits. 
\end{proof}

\bibliographystyle{amsalpha}
\bibliography{relativelyperfect}

\providecommand{\bysame}{\leavevmode\hbox to3em{\hrulefill}\thinspace}
\providecommand{\MR}{\relax\ifhmode\unskip\space\fi MR }
\providecommand{\MRhref}[2]{%
  \href{http://www.ams.org/mathscinet-getitem?mr=#1}{#2}
}
\providecommand{\href}[2]{#2}
\begin{thebibliography}{ACLB21}

\bibitem[ACLB21]{ALBFourier}
Johannes Ansch{\"u}tz and Arthur {C}{\'e}sar~{L}e {B}ras, \emph{A {F}ourier
  transform for {B}anach--{C}olmez spaces}, arXiv preprint arXiv:2111.11116
  (2021).

\bibitem[AGV72]{SGA4}
M.~Artin, A.~Grothendieck, and J.~L. Verdier, \emph{Th\'eorie des topos et
  cohomologie \'etale des sch\'emas {(SGA 4)}}, Lecture Notes in Mathematics,
  Springer-Verlag, 1972.

\bibitem[Alm73]{Almkvist2}
Gert Almkvist, \emph{Endomorphisms of finitely generated projective modules
  over a commutative ring}, Ark. Mat. \textbf{11} (1973), 263--301. \MR{424786}

\bibitem[Alm78]{Almkvist1}
\bysame, \emph{{$K$}-theory of endomorphisms}, J. Algebra \textbf{55} (1978),
  no.~2, 308--340. \MR{523461}

\bibitem[BL19]{BLRH}
Bhargav Bhatt and Jacob Lurie, \emph{A {R}iemann-{H}ilbert correspondence in
  positive characteristic}, Camb. J. Math. \textbf{7} (2019), no.~1-2, 71--217.
  \MR{3922360}

\bibitem[BM21]{BMarc}
Bhargav Bhatt and Akhil Mathew, \emph{The arc-topology}, Duke Math. J.
  \textbf{170} (2021), no.~9, 1899--1988. \MR{4278670}

\bibitem[Bou98]{BourbakiCA}
Nicolas Bourbaki, \emph{Commutative algebra. {C}hapters 1--7}, Elements of
  Mathematics (Berlin), Springer-Verlag, Berlin, 1998, Translated from the
  French, Reprint of the 1989 English translation. \MR{1727221}

\bibitem[BP09]{BocklePink}
Gebhard B\"{o}ckle and Richard Pink, \emph{Cohomological theory of crystals
  over function fields}, EMS Tracts in Mathematics, vol.~9, European
  Mathematical Society (EMS), Z\"{u}rich, 2009. \MR{2561048}

\bibitem[Bre78]{Breen78}
Lawrence Breen, \emph{Extensions du groupe additif}, Inst. Hautes \'{E}tudes
  Sci. Publ. Math. (1978), no.~48, 39--125. \MR{516914}

\bibitem[Bre81]{Breen}
\bysame, \emph{Extensions du groupe additif sur le site parfait}, Algebraic
  surfaces ({O}rsay, 1976--78), Lecture Notes in Math., vol. 868, Springer,
  Berlin-New York, 1981, pp.~238--262. \MR{638602}

\bibitem[BS17]{BSWitt}
Bhargav Bhatt and Peter Scholze, \emph{Projectivity of the {W}itt vector affine
  {G}rassmannian}, Invent. Math. \textbf{209} (2017), no.~2, 329--423.
  \MR{3674218}

\bibitem[Cha04]{Chai}
Ching-Li Chai, \emph{Notes on {C}artier--{D}ieudonn{\'e} theory}, 2004,
  Available at
  \url{https://www2.math.upenn.edu/~chai/course_notes/cartier_12_2004.pdf}.

\bibitem[CM21]{CM21}
Dustin Clausen and Akhil Mathew, \emph{Hyperdescent and \'{e}tale
  {$K$}-theory}, Invent. Math. \textbf{225} (2021), no.~3, 981--1076.
  \MR{4296353}

\bibitem[DK73]{SGA7}
Pierre Deligne and Nicholas Katz, \emph{Groupes de monodromie en g\'eom\'etrie
  alg\'ebrique. {II}}, Lecture Notes in Mathematics, Vol 340, Springer-Verlag,
  1973, S{\'e}minaire de G{\'e}om{\'e}trie Alg{\'e}brique du Bois-Marie
  1967--1969 (SGA 7, {II}).

\bibitem[EK04]{EmertonKisin}
Matthew Emerton and Mark Kisin, \emph{The {R}iemann-{H}ilbert correspondence
  for unit {$F$}-crystals}, Ast\'{e}risque (2004), no.~293, vi+257.
  \MR{2071510}

\bibitem[FLS94]{FranjouLannesSchwartz}
Vincent Franjou, Jean Lannes, and Lionel Schwartz, \emph{Autour de la
  cohomologie de {M}ac {L}ane des corps finis}, Invent. Math. \textbf{115}
  (1994), no.~3, 513--538. \MR{1262942}

\bibitem[Haz12]{Hazewinkel}
Michiel Hazewinkel, \emph{Formal groups and applications}, AMS Chelsea
  Publishing, Providence, RI, 2012, Corrected reprint of the 1978 original.
  \MR{2987372}

\bibitem[Kat73]{Katz}
Nicholas~M. Katz, \emph{{$p$}-adic properties of modular schemes and modular
  forms}, Modular functions of one variable, {III} ({P}roc. {I}nternat.
  {S}ummer {S}chool, {U}niv. {A}ntwerp, {A}ntwerp, 1972), 1973, pp.~69--190.
  Lecture Notes in Mathematics, Vol. 350. \MR{0447119}

\bibitem[Kat86]{Kato1}
Kazuya Kato, \emph{Duality theories for the {$p$}-primary \'{e}tale cohomology.
  {I}}, Algebraic and topological theories ({K}inosaki, 1984), Kinokuniya,
  Tokyo, 1986, pp.~127--148. \MR{1102256}

\bibitem[Kuh95]{KuhnIII}
Nicholas~J. Kuhn, \emph{Generic representations of the finite general linear
  groups and the {S}teenrod algebra. {III}}, $K$-Theory \textbf{9} (1995),
  no.~3, 273--303. \MR{1344142}

\bibitem[Kun69]{Kunz1}
Ernst Kunz, \emph{Characterizations of regular local rings of characteristic
  {$p$}}, Amer. J. Math. \textbf{91} (1969), 772--784. \MR{252389}

\bibitem[Lur09]{HTT}
Jacob Lurie, \emph{Higher topos theory}, Annals of Mathematics Studies, vol.
  170, Princeton University Press, Princeton, NJ, 2009. \MR{2522659}

\bibitem[Lur17]{HA}
Jacob Lurie, \emph{Higher algebra}, 2017, Available at
  \url{https://www.math.ias.edu/~lurie/papers/HA.pdf}.

\bibitem[Lur18]{SAG}
\bysame, \emph{Spectral algebraic geometry}, 2018, Available at
  \url{https://www.math.ias.edu/~lurie/papers/SAG-rootfile.pdf}.

\bibitem[Lyu97]{Lyubeznik}
Gennady Lyubeznik, \emph{{$F$}-modules: applications to local cohomology and
  {$D$}-modules in characteristic {$p>0$}}, J. Reine Angew. Math. \textbf{491}
  (1997), 65--130. \MR{1476089}

\bibitem[Ma14]{Ma14}
Linquan Ma, \emph{The category of {$F$}-modules has finite global dimension},
  J. Algebra \textbf{402} (2014), 1--20. \MR{3160413}

\bibitem[MS76]{McDS}
D.~McDuff and G.~Segal, \emph{Homology fibrations and the ``group-completion''
  theorem}, Invent. Math. \textbf{31} (1975/76), no.~3, 279--284. \MR{402733}

\bibitem[Ohk18]{Ohkawa}
Sachio Ohkawa, \emph{Riemann-{H}ilbert correspondence for unit {$F$}-crystals
  on embeddable algebraic varieties}, Ann. Inst. Fourier (Grenoble) \textbf{68}
  (2018), no.~3, 1077--1120. \MR{3805768}

\bibitem[Ryd10]{Rydh}
David Rydh, \emph{Submersions and effective descent of \'{e}tale morphisms},
  Bull. Soc. Math. France \textbf{138} (2010), no.~2, 181--230. \MR{2679038}

\bibitem[Sch]{condensed}
Peter Scholze, \emph{Lectures on condensed mathematics}.

\bibitem[Sch16]{Schedlmeier}
Tobias Schedlmeier, \emph{Cartier crystals and perverse constructible {\'e}tale
  $p$-torsion sheaves}, arXiv preprint arXiv:1603.07696 (2016).

\bibitem[{Sta}22]{stacks-project}
The {Stacks project authors}, \emph{The stacks project},
  \url{https://stacks.math.columbia.edu}, 2022.

\bibitem[SV96]{SuslinVoevodsky}
Andrei Suslin and Vladimir Voevodsky, \emph{Singular homology of abstract
  algebraic varieties}, Invent. Math. \textbf{123} (1996), no.~1, 61--94.
  \MR{1376246}

\end{thebibliography}

\end{document}